\theoremstyle{plain}
\newtheorem{thrm}{Theorem}[section]
\newtheorem{lemma}[thrm]{Lemma}
\newtheorem{cor}[thrm]{Corollary}
\newtheorem{rmrk}[thrm]{Remark}
\newtheorem{dfn}[thrm]{Definition}
\numberwithin{equation}{section}
\begin{document}
% begin top matter
% ********************* macroes needed for this paper ************************
\newcommand{\SL}{\mathcal L^{1,p}( D)}
\newcommand{\Lp}{L^p( Dega)}
\newcommand{\CO}{C^\infty_0( \Omega)}
\newcommand{\Rn}{\mathbb R^n}
\newcommand{\Rm}{\mathbb R^m}
\newcommand{\R}{\mathbb R}
\newcommand{\Om}{\Omega}
\newcommand{\Hn}{\mathbb H^n}
\newcommand{\N}{\mathbb N}
\newcommand{\aB}{\alpha B}
\newcommand{\eps}{\epsilon}
\newcommand{\BVX}{BV_X(\Omega)}
\newcommand{\p}{\partial}
\newcommand{\IO}{\int_\Omega}
\newcommand{\bG}{\mathbb{G}}
\newcommand{\bg}{\mathfrak g}
\newcommand{\Bux}{\mbox{Box}}
\newcommand{\al}{\alpha}
\newcommand{\til}{\tilde}
\newcommand{\nuX}{\boldsymbol{\nu}^X}
\newcommand{\bN}{\boldsymbol{N}}
\newcommand{\nh}{\nabla^H}
\newcommand{\Gp}{G_{D,p}}
\newcommand{\n}{\boldsymbol \nu}

% ****************************************************************************

\title[Non-divergence form parabolic equations, etc.]{Non-divergence form parabolic equations associated with non-commuting vector fields: Boundary behavior of nonnegative solutions}

\author{M. Frentz}
\address{Department of Mathematics
and mathematical Statistics\\ Ume{\aa } University\\ S-90187 Ume{\aa },
Sweden}
\email[Marie Frentz]{marie.frentz@math.umu.se}

\author{N. Garofalo}
\address{Department of Mathematics\\
Purdue University \\
West Lafayette IN 47907-1968}
\email[Nicola Garofalo]{garofalo@math.purdue.edu}
\thanks{Second author supported in part by NSF Grant DMS-07010001}
\author{E. G{\"o}tmark}
\address{Department of Mathematics
and mathematical Statistics\\ Ume{\aa } University\\ S-90187 Ume{\aa },
Sweden}
\email[Elin G{\"o}tmark]{elin.gotmark@math.umu.se}

\author{I. Munive}
\address{Department of Mathematics\\
Purdue University \\
West Lafayette IN 47907-1968}
\email[Isidro Munive]{imunive@math.purdue.edu}
\thanks{Fourth author supported in part by the second author's NSF Grant DMS-07010001}

\author{K. Nystr\"{o}m}
\address{Department of Mathematics
and mathematical Statistics\\ Ume{\aa } University\\ S-90187 Ume{\aa },
Sweden}
\email[Kaj Nystr\"{o}m]{kaj.nystrom@math.umu.se}

\date{\today}

%
% AMS information
%
\keywords{}
\subjclass[2000]{31C05, 35C15, 65N99}

\begin{abstract}
In a cylinder $\Omega_T=\Omega\times (0,T)\subset \R^{n+1}_+$ we study the boundary behavior of nonnegative
solutions of second order parabolic
equations of the form 
\[
Hu =\sum_{i,j=1}^ma_{ij}(x,t) X_iX_ju - \p_tu = 0, \ (x,t)\in\R^{n+1}_+,
\]
where $X=\{X_1,...,X_m\}$ is a system of $C^\infty$ vector fields in 
$\Rn$ satisfying H\"ormander's finite
rank condition \eqref{frc}, and $\Omega$ is a non-tangentially accessible domain with respect to the Carnot-Carath\'eodory distance $d$ induced by $X$. Concerning the matrix-valued function $A=\{a_{ij}\}$, we assume that 
it be real, symmetric and uniformly positive definite. Furthermore, we suppose that its entries $a_{ij}$ be H\"older continuous with respect to the parabolic distance associated with $d$. Our main results are: 1) a backward Harnack
inequality for nonnegative solutions vanishing on the lateral boundary
(Theorem \ref{T:back}); 2) the H\"older continuity  up to the boundary of the quotient of two nonnegative solutions
which vanish continuously on a portion of the lateral boundary (Theorem \ref{T:quotients}); 3) the doubling property for the parabolic measure associated with the operator $H$ (Theorem \ref{T:doubling}). These results generalize to the subelliptic setting of the present paper, those in Lipschitz cylinders by Fabes, Safonov and Yuan in [FSY] and [SY]. With one proviso: in those papers the authors assume that the coefficients $a_{ij}$ be only bounded and measurable, whereas we assume H\"older continuity with respect to the intrinsic parabolic distance.
\end{abstract}

\maketitle
% end top matter

%\tableofcontents

\newpage

\section{Introduction}\label{S:i}

Let $\Omega\subset\Rn$ be a bounded domain and consider the cylinder $\Omega_T=\Omega\times(0,T)\subset \R^{n+1}_+$, where $T>0$ is fixed. In this paper we establish a number of results concerning the boundary
behavior of non-negative solutions in $\Omega_T$ of second order parabolic equations of the type 
\begin{equation}\label{Hu}
Hu=Lu - \partial_t u = \sum_{i,j=1}^ma_{ij}(x,t)X_i X_j u - \partial_t u = 0. 
\end{equation}
Here, $X = \{X_1,...,X_m\}$ is a
system of $C^\infty$ vector fields in $\Rn$ satisfying H\"ormander's finite rank
condition, see \cite{H}:
\begin{equation}\label{frc}
\text{rank Lie}\ [X_1,...,X_m] \equiv n.
\end{equation} 
Concerning the $m\times m$ matrix-valued function $A(x,t)=\{a_{ij}(x,t)\}$ we assume that it be
symmetric, with bounded and measurable entries, and that there exists $\lambda \in [1,\infty)$ such that for every $(x,t)\in\R^{n+1}$, and $\xi\in\R^m$,
\begin{equation}\label{ell}
\lambda^{-1}|\xi|^2\leq \sum_{i,j=1}^ma_{ij}(x,t)\xi_i\xi_j\leq
\lambda|\xi|^2.
\end{equation}

When $m=n$ and $\{X_1,...,X_m\}=\{\partial_{x_1},...,\partial_{x_n}\}$, the operator $H$ in \eqref{Hu}
coincides with that 
studied in \cite{FSY} and \cite{SY}. However, in contrast with these papers, in which the coefficients were assumed only bounded and measurable, we will also assume that the entries of the matrix $A(x,t)$ be H\"older continuous with respect to the intrinsic parabolic distance associated with the system $X$. More precisely, we indicate with $d(x,y)$ the Carnot-Carath\'eodory distance, between $x,y\in\Rn$,
induced by $\{X_1,...,X_m\}$. We also let 
\[
d_p(x,t,y,s)=(d(x,y)^2+|t-s|)^{1/2}
\]
denote the parabolic distance associated with the metric $d$. Then, we assume that there exist $C>0$, and $\sigma\in (0,1)$, such that for $(x,t),\
(y,s)\in \R^{n+1}$,
\begin{equation} \label{hol}
|a_{ij}(x,t)-a_{ij}(y,s)|\leq C d_p(x,t,y,s)^\sigma,\ \ \ \ i,j\in\{1,..,m\}.
\end{equation}
The reason for imposing \eqref{hol} will be discussed below. 

Concerning the domain $\Omega$ we will assume that it be a NTA domain (non-tangentially accessible domain), with parameters $M$, $r_0$, in the sense of \cite{CG}, \cite{CGN4}, see Definition \ref{D:NTA} below. Under this assumption we can prove that all points on the parabolic boundary 
\[
\partial_p\Omega_T = S_T \cup (\Omega \times \{0\}),\ \ \ S_T = \partial \Omega \times (0,T),
\]
 of the cylinder $\Omega_T$ are regular for the Dirichlet problem for the operator $H$ in \eqref{Hu}. In particular, for any $f\in C(\partial_p\Omega_T)$, there exists  a unique Perron-Wiener-Brelot-Bauer solution $u=u_f^{\Omega_T}\in C(\overline \Omega_T)$ to the
Dirichlet problem 
\begin{eqnarray}  \label{dp}
Hu=0\mbox{ in $\Omega_T$,\ \ \ \  $u=f$ on $\partial_p\Omega_T$}.
\end{eqnarray}
Moreover, one can conclude that for every $(x, t)\in \Omega_T$ there exists a unique probability measure  $d\omega^{(x,t)}$ on $\partial_p \Omega_T$ for which
\begin{eqnarray}  \label{1.1xx}
u(x,t)=\int\limits_{\partial_p \Omega_T}f(y,s)d\omega^{(x,t)}(y,s).
\end{eqnarray}
Henceforth, we refer to $\omega^{(x,t)}$ as the $H$-\emph{parabolic measure} 
relative to $(x,t)$ and $\Omega_T$.

The metric ball centered at $x\in \Rn$ with radius $r>0$ will be indicated with 
\[
B_d(x,r)=\{y\in \Rn:\ d(x,y)<r\}.
\]
For  $(x,t)\in\R^{n+1}$ and $r>0$ we let
\[
C_r^-(x,t)=B_d(x,r)\times(t-r^2,t),\ \ \ C_r(x,t)=B_d(x,r)\times(t-r^2,t+r^2), 
\]
and we define 
\begin{equation}\label{surball}
\Delta(x,t,r)=S_T \cap C_r(x,t).
\end{equation} 
By Definition \ref{D:NTA} below, if $\Omega$ is a given NTA domain with parameters $M$ and $r_0$, for any $x_0\in\partial\Omega$, $0<r<r_0$,
there exists a non-tangential corkscrew, i.e., a point $A_r(x_0)\in\Omega$, such that 
\[
M^{-1}r<d(x_0,A_r(x_0))<r,\ \ \text{and}\ \ 
d(A_r(x_0),\partial\Omega)\geq M^{-1}r.
\]
In the following we let $A_r(x_0,t_0)=(A_r(x_0),t_0)$ whenever $(x_0,t_0)\in S_T$ and $0<r<r_0$. When
we say that a constant $c$ depends on the operator $H$ we mean that $c$ depends on the dimension $n$, the number of
vector fields $m$, the vector fields $\{X_1,...,X_m\}$, the constant $
\lambda $ in \eqref{ell} and the parameters $C, \sigma$ in \eqref{hol}. We let $\mbox{diam}
(\Omega) = \sup\{d(x,y)\mid x,y\in\Omega\}$ denote the diameter of $\Omega$. The following
theorems represents the main results of this paper.

\begin{thrm}[Backward Harnack inequality]\label{T:back}
Let $u$ be a nonnegative solution of $Hu=0$ in $\Omega_T$ vanishing continuously on $S_T$.  Let $0<\delta\ll 
\sqrt{T}$ be a fixed constant, let $(x_0,t_0)\in S_T$, $\delta^2\leq t_0\leq
T-\delta^2$, and assume that $r<\min\{r_0/2,\sqrt{(T-t_0-\delta^2)/4},\sqrt{(t_0-\delta^2)/4}\}$. Then, there exists
a constant $c=c(H,M,r_0,\mbox{diam}(\Omega),T,\delta)$, $1\leq c<\infty$,
such that for every $(x,t)\in \Omega_T\cap C_{r/4}(x_0,t_0)$ one has
\begin{equation*}
u(x,t)\leq cu(A_r(x_0,t_0)).
\end{equation*}
\end{thrm}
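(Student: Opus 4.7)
The plan is to split the argument according to whether the time coordinate $t$ of $(x,t) \in \Omega_T \cap C_{r/4}(x_0, t_0)$ lies below or above $t_0$. The past case $t \le t_0$ follows directly from a parabolic Carleson-type estimate, while the future case $t > t_0$ requires the genuine backward Harnack phenomenon at the level of corkscrew points and is the heart of the theorem. The stringent upper bound on $r$ in the hypothesis is exactly what is needed to ensure that all auxiliary space-time reference points used below lie well inside $\Omega_T$ and that the relevant interior estimates have constants depending only on the listed parameters.

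For the past case, I would apply a parabolic Carleson-type estimate for $H$ on $C_r(x_0, t_0) \cap \Omega_T$, which in this subelliptic non-divergence setting is available through the Krylov-Safonov-type interior regularity for $H$ combined with the NTA structure of $\Omega$ adapted to the Carnot-Carath\'eodory distance $d$: the H\"ormander condition \eqref{frc} supplies the interior Harnack chains, while the H\"older hypothesis \eqref{hol} underlies the companion Schauder-type theory. With constants depending only on $H, M, r_0, \mathrm{diam}(\Omega), T, \delta$, this yields $u(x,t) \le c\, u(A_r(x_0, t_0))$ for every $(x,t) \in C_{r/2}^-(x_0, t_0) \cap \Omega_T$, covering in particular the past half of $C_{r/4}(x_0, t_0)$.

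For the future case $t > t_0$, applying the same Carleson estimate, but with base point shifted forward to $(x_0, t_0 + r^2/8)$ and a slightly smaller radius, reduces the claim to the single inequality
\[
u\bigl(A_{r/2}(x_0), t_0 + r^2/8\bigr) \le C\, u\bigl(A_r(x_0, t_0)\bigr).
\]
This compares $u$ at a corkscrew in the \emph{future} of $t_0$ with its value at the corkscrew at time $t_0$, and so runs against the direction of propagation of information for $H$; it is therefore not a consequence of the interior Harnack inequality. Following the strategy of Fabes, Safonov, and Yuan, I would introduce the Green function $G(\cdot, \cdot; P)$ of $H$ on a suitable NTA cylinder containing both points, with pole $P$ close to $A_r(x_0, t_0)$, use the maximum principle to conclude $u \le \kappa G$ in an appropriate subregion for sufficiently large $\kappa$, and then invoke a backward estimate for $G$ itself, derived from the adjoint operator $H^*$ and the interior Harnack for nonnegative adjoint solutions.

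The main obstacle lies in this final step: building a Green function theory for the non-divergence operator $H$ that is robust enough to support such a comparison. The H\"older continuity hypothesis \eqref{hol} on the coefficients is imposed precisely to secure existence, positivity, and interior Harnack for the adjoint Green function, while the NTA assumption on $\Omega$ (in its subelliptic incarnation, with corkscrew and Harnack chain conditions taken with respect to $d$) fixes the geometric scale at which the comparison can be set up and, if necessary, iterated. Once these ingredients are in place, the proof closes along the lines of the Lipschitz case treated in \cite{FSY} and \cite{SY}, with Euclidean metric balls replaced by their Carnot-Carath\'eodory counterparts.
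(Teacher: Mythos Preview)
Your proposal contains a genuine gap in the future case. The approach via the Green function and the adjoint operator $H^*$ is not what the paper does, and more importantly it is not available in this setting: for the non-divergence operator $H = \sum a_{ij} X_i X_j - \partial_t$, the formal adjoint involves the terms $X_j X_i(a_{ij}\,\cdot\,)$, and hence second derivatives of the coefficients $a_{ij}$. Only H\"older continuity \eqref{hol} is assumed, so $H^*$ is not even a well-defined differential operator with locally bounded coefficients, and no Harnack inequality for nonnegative adjoint solutions is known here. This obstruction is already present in the classical uniformly parabolic case treated in \cite{FSY}, \cite{SY}, and those papers do \emph{not} proceed via the adjoint; you are misremembering their method.

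The paper's actual argument (following \cite{FSY}) is a scale-selection trick that avoids the adjoint entirely. One sets $f(\hat r)=\hat r^{-\gamma}\sup_{\Omega_T\cap C_{2\hat r}^-(x_0,t_0)}u$, with $\gamma$ the exponent from Lemma~\ref{lem4.7}, and lets $\rho$ be the largest scale in $[r,\delta]$ at which $f(\rho)\geq f(r)$. Lemmas~\ref{lem4.7} and~\ref{lem4.8} then reduce matters to proving $\sup_{\Omega_T\cap C_{2\rho}^-(x_0,t_0)}u\leq c\,u(A_{2\rho}^-(x_0,t_0))$ at this single scale $\rho$. If $\rho$ is large (comparable to $\delta$), the global interior estimate of Lemma~\ref{lem4.9} applies. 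If $\rho$ is small, the maximality of $\rho$ forces $f(2K\rho)\leq f(\rho)$, which is exactly the hypothesis of Lemma~\ref{lem4.95}; that lemma, proved by comparison with explicit barriers built from the fundamental solution (Gaussian bounds, Lemma~\ref{Gaussbound}), transfers the supremum down to a fixed earlier time slice, after which the forward Harnack chain finishes the job. No Green function and no adjoint appear anywhere.
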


\begin{thrm}[Boundary H{\"o}lder continuity of
quotients of solutions]\label{T:quotients}
Let $u, v$ be nonnegative
solutions of $Hu=0$ in $\Omega_T$. Given $(x_0,t_0)\in S_T$, assume that $r<\min\{r_0/2,\sqrt{(T-t_0)/4},\sqrt{t_0/4}\}$.  
If $u, v$ vanish continuously on $\Delta(x_0,t_0,2r)$, then the quotient $v/u$ is H{\"o}lder continuous on the
closure of $\Omega_T\cap C_{r}^-(x_0,t_0)$.
\end{thrm}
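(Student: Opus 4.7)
The plan is to show that the oscillation of $v/u$ on parabolic cylinders $\Omega_T \cap C_{\rho}^-(x_0,t_0)$ decays geometrically as $\rho$ shrinks, which yields H\"older continuity up to the boundary by a standard Campanato-type argument.

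The essential preliminary step, which I would establish first, is a \emph{boundary Harnack principle}: there exist constants $c_0>1$ and $C<\infty$ (depending on $H$, $M$, $r_0$, $\mbox{diam}(\Omega)$, $T$) such that for any nonnegative solutions $w_1, w_2$ of $Hw=0$ in $\Omega_T$ vanishing continuously on $\Delta(x_0,t_0,2r)$,
\[
\frac{w_1(x,t)}{w_2(x,t)} \le C \, \frac{w_1(A_r(x_0,t_0))}{w_2(A_r(x_0,t_0))}\qquad\text{for every } (x,t) \in \Omega_T \cap C_{r/c_0}^-(x_0,t_0).
\]
This subelliptic parabolic analogue of the classical boundary Harnack principle I would derive from three ingredients: (a) a Carleson-type bound $w_1(x,t) \le C\,w_1(A_r(x_0,t_0))$ near the lateral boundary, applied to the numerator; (b) the backward Harnack inequality (Theorem~\ref{T:back}) applied to the denominator $w_2$, which propagates $w_2(A_r(x_0,t_0))$ to a full nontangential neighborhood regardless of time direction; and (c) the NTA Harnack chain condition, which links generic nontangential points to the corkscrew $A_r(x_0,t_0)$ through a uniformly bounded number of interior Harnack steps.

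Granted this comparison principle, H\"older decay follows by the classical oscillation argument. Set
\[
\Phi(\rho) = \sup_{\Omega_T \cap C_\rho^-(x_0,t_0)} \frac{v}{u},\qquad \varphi(\rho) = \inf_{\Omega_T \cap C_\rho^-(x_0,t_0)} \frac{v}{u},
\]
and $\omega(\rho) = \Phi(\rho)-\varphi(\rho)$, for $\rho \le r$. The two nonnegative solutions $v-\varphi(\rho)u$ and $\Phi(\rho)u - v$ both vanish continuously on $\Delta(x_0,t_0,2r)$, and at the corkscrew $A_\rho(x_0,t_0)$ at least one of the quantities $v/u-\varphi(\rho)$, $\Phi(\rho)-v/u$ is $\ge \tfrac12 \omega(\rho)$. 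Assuming the former, the boundary Harnack principle applied with $w_1 = v - \varphi(\rho)u$ and $w_2 = u$ gives
\[
\frac{v(x,t)}{u(x,t)} - \varphi(\rho) \ge c\,\omega(\rho)\qquad \text{on } \Omega_T \cap C_{\rho/c_0}^-(x_0,t_0)
\]
for some $c\in(0,1/2]$ independent of $\rho$. Hence $\omega(\rho/c_0) \le (1-c)\omega(\rho)$; iterating produces $\omega(\rho) \le C\,(\rho/r)^\alpha\,\omega(r)$, which is precisely the required H\"older decay.

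The principal obstacle is the boundary Harnack principle itself: in the subelliptic parabolic setting one must delicately synchronize the Carleson estimate with the backward Harnack inequality, aligning corkscrews at slightly displaced times and using NTA Harnack chains that respect the non-commutative Carnot--Carath\'eodory geometry. Once this is in place, the oscillation iteration, together with the standard interior H\"older estimates used to upgrade pointwise boundary decay to H\"older continuity on the closed subcylinder $\overline{\Omega_T \cap C_r^-(x_0,t_0)}$, is essentially formal.
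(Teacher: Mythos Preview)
Your overall architecture (boundary Harnack principle $\Rightarrow$ oscillation decay of $v/u$) matches the paper's, but there is a genuine gap in how you propose to obtain the boundary Harnack principle, and a related missing reduction step.

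First, ingredient (b) is not available. Theorem~\ref{T:back} (the backward Harnack inequality) requires the solution to vanish on the \emph{entire} lateral boundary $S_T$, whereas in Theorem~\ref{T:quotients} the solutions $u,v$ are only assumed to vanish on the local piece $\Delta(x_0,t_0,2r)$. You therefore cannot invoke Theorem~\ref{T:back} for the denominator $w_2=u$. The paper handles this by proving the oscillation decay first under the extra hypothesis that $u$ vanishes on all of $S_T$ (Lemma~\ref{lem4.13}), and then, for the general case, decomposing $u=u_0+u_1$ with $u_0$ equal to $u$ on $\Omega\times\{0\}$ and $u_0=0$ on $S_T$, so that the quotients $v/u_0$ and $u_1/u_0$ fall under Lemma~\ref{lem4.13} and $v/u=(v/u_0)/(1+u_1/u_0)$. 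Your outline omits this reduction entirely.

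Second, even granting (b), the combination (a)+(b)+(c) does not yield the comparison $\frac{w_1}{w_2}\le C\frac{w_1(A_r)}{w_2(A_r)}$. The Carleson bound controls $w_1$ from above, and backward Harnack plus chains compare $w_2$ only at \emph{nontangential} points; neither gives a lower bound on $w_2(x,t)$ for $(x,t)$ approaching the lateral boundary, where both solutions vanish and the quotient is indeterminate. Matching the vanishing rates is the crux, and the paper achieves it through a separate comparison in the boundary collar $(\Omega_T\setminus\Omega_T^r)\cap C_{Kr}^-(x_0,t_0)$ (Lemmas~\ref{lem4.10} and \ref{lem4.11}), which then feeds into Lemma~\ref{lem4.12} with asymmetric reference points $A_{Kr}^{+}$ and $A_{Kr}^{-}$. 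That collar argument is the missing idea in your sketch.
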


\begin{thrm}[Doubling property of the $H$-parabolic
measure]\label{T:doubling} 
Let $K\geq 100$ and $\nu\in (0,1)$ be fixed constants. Let $(x_0,t_0)\in S_T$, and suppose that
$r<\min\{\nu r_0/2,\sqrt{(T-t_0)/4},\sqrt{t_0/4}\}.$
Then, there exists a constant $c=c(H,M,\nu,K,r_0)$, $1\leq c<\infty$, such that for every $(x,t)\in\Omega_T$, with $d(x_0,x)\leq K|t-t_0|^{1/2}$, $t-t_0\geq 16r^2$, one has 
\begin{eqnarray*}
\omega^{(x,t)}(\Delta(x_0,t_0,2r))\leq c\omega^{(x,t)}(\Delta(x_0,t_0,r)).
\end{eqnarray*} 
\end{thrm}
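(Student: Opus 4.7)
I would follow the template of Fabes--Safonov--Yuan, appropriately adapted to the subelliptic framework. The argument splits into a uniform lower bound for the $H$-parabolic measure of $\Delta(x_0,t_0,r)$ at a distinguished ``time-forward corkscrew,'' followed by a transfer of that bound to the general admissible $(x,t)$ via the parabolic Harnack inequality together with the NTA Harnack chain property.

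To set this up, fix $(x_0,t_0)\in S_T$ and $r$ as in the statement, and introduce the time-forward corkscrew
\[
A_r^+(x_0,t_0):=\bigl(A_r(x_0),\,t_0+\beta r^2\bigr),
\]
where $\beta>0$ is a fixed constant depending only on $M$, chosen so that $A_r^+(x_0,t_0)$ lies inside $\Omega_T$ at parabolic distance $d_p$ comparable to $r$ from $(x_0,t_0)$. The first and central step is to prove the non-degeneracy estimate: there exists $c_0=c_0(H,M)>0$ such that
\[
\omega^{A_r^+(x_0,t_0)}\bigl(\Delta(x_0,t_0,r)\bigr)\ \geq\ c_0.
\]
The plan is to obtain this by constructing an explicit nonnegative subsolution $\varphi$ of $H$ in a parabolic box of size $\sim r$ centered at $(x_0,t_0)$, with $\varphi\leq\chi_{\Delta(x_0,t_0,r)}$ on the parabolic boundary of the box and $\varphi(A_r^+(x_0,t_0))\geq c_0$. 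The subsolution is built from a Gaussian lower bound on the fundamental solution of $H$, which is available under \eqref{hol} via a Levi-type parametrix with reference operator the frozen $\sum a_{ij}(x_0,t_0)X_iX_j-\partial_t$, for which two-sided Gaussian bounds in the CC-metric are known. The maximum principle then yields $w(A_r^+(x_0,t_0))\geq c_0$, where $w(y,s):=\omega^{(y,s)}(\Delta(x_0,t_0,r))$.

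Next, for a general $(x,t)$ satisfying $t-t_0\geq 16r^2$ and $d(x_0,x)\leq K|t-t_0|^{1/2}$, the point $(x,t)$ lies strictly later in time than $A_r^+(x_0,t_0)$ at controlled $d_p$-distance from it. Using the NTA Harnack chain condition of Definition \ref{D:NTA} I would connect $A_r(x_0)$ and $x$ by a chain of metric balls lying well inside $\Omega$, of length bounded in terms of $M$ and $K$, and then iterate the parabolic Harnack inequality for $w$, absorbing the time coordinates in the positive gap $t-t_0-\beta r^2\geq 8r^2$. This produces
\[
w(x,t)\ \geq\ c_1\,w(A_r^+(x_0,t_0))\ \geq\ c_0c_1,
\]
with $c_1=c_1(H,M,K,\nu)$. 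Combined with the trivial bound $\omega^{(x,t)}(\Delta(x_0,t_0,2r))\leq 1$, this yields
\[
\omega^{(x,t)}(\Delta(x_0,t_0,2r))\ \leq\ \frac{1}{c_0c_1}\,\omega^{(x,t)}(\Delta(x_0,t_0,r)),
\]
which is the claim, with $c=(c_0c_1)^{-1}$.

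The main obstacle is the non-degeneracy step: producing $c_0$ independent of $(x_0,t_0)$ and of the scale $r$. In \cite{FSY,SY} this follows at once from classical Aronson-type Gaussian estimates for uniformly parabolic operators with bounded measurable coefficients. In the present CC-framework no such bounds are available for merely bounded measurable coefficients, and hypothesis \eqref{hol} enters precisely to construct the fundamental solution of $H$ and to obtain two-sided kernel estimates with respect to $d_p$---the essential point where our treatment necessarily departs from \cite{FSY,SY}.
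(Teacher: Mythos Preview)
Your proposal has a genuine gap in the Harnack--chain step, and the gap is not a technicality: the final inequality you derive, namely
\[
\omega^{(x,t)}(\Delta(x_0,t_0,r))\ \geq\ c_0c_1
\]
with $c_0c_1$ independent of $r$ and of $(x,t)$, is simply false. Both $\omega^{(x,t)}(\Delta(x_0,t_0,r))$ and $\omega^{(x,t)}(\Delta(x_0,t_0,2r))$ tend to $0$ as $(t-t_0)/r^2\to\infty$; the doubling theorem is a statement about their \emph{ratio}, not about a uniform lower bound for either one separately. So the route ``lower bound for $\omega^{(x,t)}(\Delta(x_0,t_0,r))$ combined with the trivial upper bound $\omega^{(x,t)}(\Delta(x_0,t_0,2r))\leq 1$'' cannot succeed once $(x,t)$ is allowed to range over the full cone $\Gamma_K^+(x_0,t_0,4r,\nu r_0)$.

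Concretely, your chain argument fails in two ways. First, the admissible point $(x,t)$ satisfies only $x\in\Omega$ and $d(x_0,x)\leq K|t-t_0|^{1/2}$; there is no lower bound on $d(x,\partial\Omega)$, so you cannot even start an interior Harnack chain at $(x,t)$. Second, even if $x$ were well inside $\Omega$, the corkscrew $A_r(x_0)$ sits at distance $\sim r/M$ from $\partial\Omega$, while $d_p\bigl(A_r^+(x_0,t_0),(x,t)\bigr)\sim |t-t_0|^{1/2}$ can be as large as $\nu r_0$. The NTA Harnack--chain condition bounds the chain length in terms of the ratio of these two scales, and that ratio is of order $|t-t_0|^{1/2}/r$, which is unbounded. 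Iterating Harnack along such a chain produces a constant of the form $C^{(t-t_0)/r^2}$, not a constant depending only on $H,M,K,\nu$.

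The paper's proof avoids this by never attempting a uniform lower bound. It sets $u(x,t)=c\,\omega^{(x,t)}(\Delta(x_0,t_0,r))-\omega^{(x,t)}(\Delta(x_0,t_0,2r))$ and shows $u\geq 0$ on the cone. Your non--degeneracy step (essentially Lemma~\ref{lem4.5-ky0}) together with a localized Harnack argument handles the cone up to a \emph{fixed} height $\hat r=\hat r(H,M,\nu,K)$. To propagate beyond $\hat r$ the paper introduces two competing quantities: a spatial infimum $f_1^u$ of $u$ over interior slices, which satisfies a polynomial growth estimate with exponent $\gamma_1$ (Lemma~\ref{lema1}, via Harnack), and a lateral supremum $f_2^u$ of $u^-$, which satisfies a polynomial \emph{decay} estimate with exponent $\gamma_2$ (Lemma~\ref{lema2}, via an oscillation argument). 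Choosing $K$ large makes $\gamma_2>\gamma_1$; a comparison of caloric measures across time slices (Lemma~\ref{lema3}) then yields a contradiction if $u$ were to become negative above height $\hat r$. This growth--versus--decay competition is the missing idea in your outline.
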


Concerning Theorems \ref{T:back}, \ref{T:quotients} and \ref{T:doubling}, we note that the study of the type of problems considered in this
paper has a long and rich history which, for uniformly parabolic equations in $\R^{n+1}$ (i.e., when in \eqref{Hu} one has $m=n$ and $\{X_1,...,X_m\}=\{\partial_{x_1},...,
\partial_{x_n}\}$), culminated with the celebrated papers of Fabes, Safonov and
Yuan \cite{FS}, \cite{FSY} and \cite{SY}. In these works the authors proved Theorem \ref{T:back}-\ref{T:doubling} for uniformly parabolic equations, both in divergence and non-divergence form, whose coefficients are only bounded and measurable.
We remark that, while these authors work in Lipschitz cylinders, one can easily
see that their proofs can be generalized to the setting  of bounded
NTA domains in the sense of \cite{JK}. While the works \cite{FSY}, \cite{SY} completed this line of
research for parabolic operators in non-divergence form, prior contributions by other researchers are contained in
\cite{FK}, \cite{FSt}, \cite{G}, \cite{KS}. For the
corresponding developments for second order parabolic operators in
divergence form we refer to \cite{FGS}, \cite{FS}, \cite{N}. For the elliptic theory,
for both operators in divergence and non-divergence form, we refer to \cite{B},
\cite{CFMS}, \cite{FGMS}, \cite{JK}. Finally, and for completion, we also note that second
order elliptic and parabolic operators in divergence form with singular
lower order terms were studied in \cite{KP} and \cite{HL}.

In the subelliptic setting of the present paper, i.e., when $m<n$ and $X =\{X_1,...,X_m\} $ is assumed to satisfy \eqref{frc}, much less is known. Several delicate new issues arise in connection with the intricate (sub-Riemannian) geometry associated with the vector fields, and the interplay of such geometry with the so-called characteristic points on the boundary of the relevant domain. In addition, the derivatives along the vector fields do not commute, and the commutators are effectively derivatives of higher order. For all these aspects we refer the reader to the works \cite{NS}, \cite{Citti}, \cite{D}, \cite{CG}, \cite{LU}, \cite{CGN3}, \cite{MM1}, \cite{MM2}, \cite{CGN4}, but this only represents a partial list of references.

In the stationary case, and for operators in divergence form, results similar to those in the present paper  have been obtained in
\cite{CG}, \cite{CGN3}, \cite{CGN4}, see also \cite{CGN1}, \cite{CGN2}, whereas for parabolic operators in divergence form the reader is referred to the recent paper by one of us \cite{Mu}. The methods in \cite{Mu}, however, extensively exploit the divergence structure of the operator and do not apply to the setting of the present paper.

We stress that for non-divergence form operators such as those treated in this paper, results such as 
Theorems \ref{T:back}-\ref{T:doubling} are new even for the case of stationary equations such as
\[
Lu = \sum_{i,j=1}^ma_{ij}(x)X_i X_j u = 0. 
\]
In view of these considerations our paper provides a novel contribution to the understanding of the
boundary behavior of solutions to parabolic equations arising from a system of non-commuting vector fields.

Concerning the proofs of Theorems \ref{T:back}-\ref{T:doubling} our approach is modeled on the ideas developed by
Fabes, Safonov and Yuan in \cite{FSY} and \cite{SY}. In fact, the ideas in those papers
have provided an important guiding line for our work. Yet, the arguments in \cite{FSY} and
\cite{SY} use mainly elementary principles like comparison principles, interior
regularity theory, the (interior) Harnack inequality, H{\"o}lder continuity
type estimates and decay estimates at the lateral boundary, for solutions
which vanish on a portion of the lateral boundary, as well as estimates for
the Cauchy problem and the fundamental solution associated to the operator
at hand. In this connection it is important that the reader keep in mind that when the matrix $A(x,t)=\{a_{ij}(x,t)\}$ in \eqref{Hu} has entries which are just bounded and measurable, then most of these results presently represent in our setting \emph{terra incognita}. More specifically, the counterparts of the Harnack inequality of Krylov and Safonov \cite{KS} and the Alexandrov-Bakel'man-Pucci type maximum principle due to Krylov \cite{Kr} presently constitute fundamental open questions.  

With this being said, our work uses heavily
the recent important results of Bramanti, Brandolini, Lanconelli and Uguzzoni
\cite{BBLU2}, see also \cite{BBLU1}, concerning the (interior) Harnack inequality, the
Cauchy problem and the existence and Gaussian estimates for fundamental solutions for
the non-divergence form operators $H$ defined in \eqref{Hu}. In fact, we assume \eqref{hol} precisely in order to be able to use results from \cite{BBLU2}.
We want to stress, however, that we have strived throughout the whole paper to provide proofs which are ``purely metrical''. By this we mean that, should the above mentioned counterpart of the results in \cite{Kr} and \cite{KS} become available, then our proofs would carry to the more general setting of bounded and measurable coefficients in \eqref{Hu} with minor changes.  

In closing we mention that the rest of the paper is organized as follows. Section 2 is of a preliminary
nature. In it we collect some notation and results concerning basic
underlying principles, and we also introduce the notion of
NTA domains following \cite{CG}. In section 3 we prove a number of basic
estimates concerning the boundary behavior of nonnegative solutions of \eqref{Hu}. In
addition we prove a number of technical lemmas which allow us to present the
proofs of Theorems \ref{T:back}-\ref{T:doubling} in a quite condensed manner. Finally, the
proofs of Theorems \ref{T:back} and \ref{T:quotients} will be presented in Section \ref{S:back}, whereas that of Theorem \ref{T:doubling} will be given in Section \ref{S:doubling}.

%%%%%%%%%%%%%%%%%%%%%%%%%%%

\section{Preliminaries}\label{S:prelim}

In this section we introduce some notation and state a number of preliminary
results for the operator $H$ defined in \eqref{Hu}. Specifically, we will discuss the
Cauchy problem and Gaussian estimates for the fundamental solution, the Harnack
inequality and comparison principle, and the Dirichlet problem in bounded
domains. In particular, we also justify the notion of $H$-parabolic measure and introduce the
notion of NTA domain.

\subsection{Notation}\label{SS:not}

In $\Rn$, with $n\geq 3$, we consider a system  $X =
\{X_1,...,X_m\}$ of $C^\infty$ vector fields satisfying
H\"ormander's finite rank condition \eqref{frc}. As in \cite{FP}, a piecewise $C^1$
curve $\gamma:[0,\ell]\to \Rn$ is called subunitary 
if at every $t\in [0,\ell]$ at which $\gamma'(t)$ exists one has for every $\xi\in\Rn$
\[
<\gamma'(t),\xi>^2\ \leq\ \sum_{j=1}^m <X_j(\gamma(t)),\xi>^2 .
\]
We note explicitly that the above inequality forces $\gamma '(t)$ to
belong to the span of $\{X_1(\gamma (t)),...,$ $ X_m(\gamma (t))\}$.
The subunit length of $\gamma$ is by definition $l_s(\gamma)=\ell$.
If we fix an open set $\Om\subset \Rn$, then given $x, y\in \Om$, denote by $\mathcal S_\Om(x,y)$ the collection
of all subunitary $\gamma:[0,\ell]\to \Om$ which join $x$ to $y$. The
accessibility theorem of Chow and Rashevsky, \cite{Ra}, \cite{Chow},
states that, if $\Om$ is connected, then for every
$x,y\in \Om$ there exists $\gamma \in \mathcal S_\Om(x,y)$. As a
consequence, if we define
\[
d_{\Om}(x,y) = \text{inf}\ \{l_s(\gamma)\mid \gamma \in \mathcal S_\Om(x,y)\},
\]
we obtain a distance on $\Om$, called the Carnot-Carath\'eodory distance, associated with the system $X$. When $\Om = \Rn$, we write $d(x,y)$ instead of $d_{\Rn}(x,y)$. It is clear that $d(x,y) \leq d_\Om(x,y)$, $x, y\in \Om$, for every connected open set $\Om \subset \Rn$. In \cite{NSW} it was proved that, given $\Om \subset \subset \Rn$, there exist $C, \epsilon >0$ such that
\begin{equation}\label{CCeucl}
C |x - y| \leq d_\Om(x,y) \leq C^{-1} |x - y|^\epsilon , \quad\quad\quad x, y \in \Om.
\end{equation}
This gives $d(x,y)\ \leq C^{-1} |x - y|^\epsilon$, $x, y\in \Om$, and therefore
\[
i: (\Rn, |\cdot|)\to (\Rn, d) \quad\quad\quad is\,\ continuous .
\]
Furthermore, it is easy to see that also the continuity of the opposite inclusion holds \cite{GN1}, and therefore the metric and the Euclidean topologies are equivalent.

For $x\in \Rn$ and $r>0$, we let $B_d(x,r) = \{y\in \Rn\mid d(x,y) < r \}$.
The basic properties of these balls were established by Nagel, Stein and Wainger in their seminal paper \cite{NSW}. These authors proved in particular that, given bounded open set $U\subset \Rn$, there exist constants $C, R_0>0$ such that, for any $x\in U$, and $0 < r \leq R_0$,
\begin{equation*}\label{pol}
C\le \frac{B_d(x,r)}{\Lambda(x,r)} \le C^{-1},
\end{equation*}
where  $\Lambda(x,r) = \sum_I |a_I(x)| r^{d_I}$ is a polynomial function with continuous coefficients.
As a consequence, one has with $C_1>0$, 
\begin{equation}\label{dc}
|B_d(x,2r)| \leq C_1 |B_d(x,r)| \qquad\text{for every}\quad x\in U\quad\text{and}\quad 0< r \leq R_0.
\end{equation}

In what follows, given $\beta\in (0,1)$, we let  $\Gamma^\beta(\Omega_T)$
denote the space of functions $u:\Omega_T\to \R$ such that 
\begin{align}\label{pol3}
||u||_{\Gamma^\beta(\Omega_T)}: & =\sup_{\Omega_T} |u|
\\
& +\sup_{(x,t),(x^{\prime
},t^{\prime })\in\Omega_T,\ (x,t)\neq(x^{\prime },t^{\prime })}\frac {|u(x,t)-u(x^{\prime },t^{\prime })|} {d_p(x,t,x^{\prime },t^{\prime}) ^\beta}<\infty.
\notag
\end{align}
We say that $u$ has a Lie derivative along $X_j$, at $(x,t)\in\Omega_T$, if $u\circ\gamma$ is differentiable at 0, where $\gamma$ is the integral curve of
$X_j$ such that $\gamma(0)=(x,t)$. Moreover, we indicate with $
\Gamma^{2+\beta}(\Omega_T)$ the space of functions $u\in \Gamma^\beta(\Omega_T)$ which admit Lie derivatives up to second
order along $X_1,...,X_m$, and up to order one with respect to $t$,
in $\Gamma^\beta(\Omega_T)$. If $u\in \Gamma^{2+\beta}(\Omega_T)$
then we let $||u||_{\Gamma^{2+\beta}(\Omega_T)}$ denote the naturally
defined norm of $u$. Furthermore, $u\in\Gamma^\beta_{\mbox{loc}}(\Omega_T)$
if $u\in\Gamma^\beta(D)$ for any compact subset $D$ of $\Omega_T$. The space 
$\Gamma^{2+\beta}_{\mbox{loc}}(\Omega_T)$ is defined analogously. Finally,
if $\beta=0$ then we simply write $\Gamma^{2}_{}(\Omega_T)$ for $%
\Gamma^{2+0}_{}(\Omega_T)$. Throughout the paper we will use the following
notation: 
\begin{align}  \label{pol3aa}
C_r(x,t)& =B_d(x,r)\times(t-r^2,t+r^2),  
\\
C_r^+(x,t)&= B_d(x,r)\times(t,t+r^2),
\notag\\
 C_r^-(x,t) & =B_d(x,r)\times(t-r^2,t), 
\notag \\
C_{r_1,r_2}(x,t)&=B_d(x,r_1)\times(t-r_2^2,t+r_2^2),  
\notag \\
C_{r_1,r_2}^+(x,t)&=B_d(x,r_1)\times(t,t+r_2^2),
\notag\\
C_{r_1,r_2}^-(x,t)& =B_d(x,r_1)\times(t-r_2^2,t),
\notag\end{align}
for $(x,t)\in\R^{n+1}$ and $r,r_1, r_2>0$. Furthermore, if $
\Omega\subset\Rn$ is a bounded domain, and $T>0$ and $\delta>0$ are given, then we let 
\begin{eqnarray}  \label{not1}
\Omega^\delta=\{x\in \Omega|\ d(x,\partial \Omega)>\delta\},\
\Omega_T^\delta=\Omega^\delta\times (0,T).
\end{eqnarray}

\subsection{The Cauchy problem}
Let $H$ be defined as in \eqref{Hu}, with the hypothesis \eqref{frc}, \eqref{ell} and \eqref{hol} in place. These assumptions allow us to use some basic results established in \cite{BBLU2}. In particular, for what concerns the existence of a
fundamental solution of the operator $H$, and Gaussian estimates, we will henceforth suppose, as it is done in \cite{BBLU2}, that the sub-Laplacian $\sum_{i=1}^m X_i^2$ associated with $X$ coincides with the standard Laplacian $\Delta = \sum_{j=1}^n \p_{x_j}^2$ in $\Rn$ outside of a fixed compact set in $\Rn$. 

In \cite{BBLU2} it is
proved that, under such hypothesis,  there exists a fundamental solution, $\Gamma$, for $H$, with a number of important properties. In particular, $\Gamma$ is a continuous function away from the diagonal of $\R^{n+1}\times\R^{n+1}$ and $\Gamma(x,t,\xi,\tau)=0$ for $t\leq\tau$.
Moreover, $\Gamma(\cdot,\cdot,\xi,\tau)\in \Gamma_{\mbox{loc}}^{2+\alpha}(\R^{n+1}\setminus\{(\xi,\tau)\})$ for every fixed $(\xi,\tau)\in
\R^{n+1}$ and $H(\Gamma(\cdot,\cdot,\xi,\tau))=0$ in $\R^{n+1}\setminus\{(\xi,\tau)\}$. For every $\psi\in C_0^\infty(\R^{n+1})$ the function 
\begin{equation*}
w(x,t)=\int\limits_{\R^{n+1}}\Gamma(x,t,\xi,\tau)\psi(\xi,\tau)d\xi
d\tau
\end{equation*}
belongs to $\Gamma_{\mbox{loc}}^{2+\alpha}(\R^{n+1})$ and we have $Hw=\psi$ in $\R^{n+1}$. Furthermore, let $\mu\geq 0$ and $T_2>T_1$
be such that $(T_2-T_1)\mu$ is small enough, let $0<\beta\leq\alpha$, let $%
g\in C^{0,\beta}(\R^n\times [T_1,T_2])$ and $f\in C(\R^n)$
be such that $|g(x,t)|,|f(x)|\leq c\exp(\mu d(x,0)^2)$ for some constant $%
c>0 $. Then, for $x\in\Rn, t\in
(T_1,T_2]$,
 the function 
\begin{equation}\label{rep}
u(x,t)=\int\limits_{\mathbf{R}^n}\Gamma(x,t,\xi,T_1)f(\xi)d\xi+\int%
\limits_{T_1}^t\int\limits_{\mathbf{R}^n}
\Gamma(x,t,\xi,\tau)g(\xi,\tau)d\xi d\tau,
\end{equation}
belongs to the class $\Gamma_{\mbox{loc}}^{2+\beta}(\Rn\times(T_1,T_2))\cap C(\Rn\times [T_1,T_2])$. Moreover, $u$
solves the Cauchy problem 
\begin{eqnarray}  \label{rep1}
Hu=g\mbox{ in }\R^n\times(T_1,T_2),\ u(\cdot,T_1)=f(\cdot)\mbox{ in }%
\Rn.
\end{eqnarray}
One also has the following Gaussian bounds.

\begin{lemma}\label{Gaussbound} There exist a positive
constant $C$ and, for every $T>0$, a positive constant $c=c(T)$ such that,
if $0<t-\tau\leq T$, $x,\xi\in\Rn$, then 
\begin{equation}\label{G1}
c^{-1}\frac{e^{-Cd(x,\xi)^2/(t-\tau)}}{|B(x,\sqrt{t-\tau})|}\leq\Gamma(x,t,
\xi,\tau)\leq c\frac{e^{-C^{-1}d(x,\xi)^2/(t-\tau)}}{|B(x,\sqrt{t-\tau})|}. 
\end{equation}
Furthermore, one also has
\begin{equation}\label{G2}
|X_i\Gamma(\cdot,t,\xi,\tau)(x)|\leq c(t-\tau)^{-1/2}\frac{e^{-C^{-1}d(x,\xi)^2/(t-\tau)}}{|B(x,\sqrt{t-\tau})|},  
\end{equation}
and
\begin{equation}\label{G3}
|X_iX_j\Gamma(\cdot,t,\xi,\tau)(x)|+|\partial_t\Gamma(x,\cdot,\xi,
\tau)(t)|\leq c(t-\tau)^{-1} c\frac{e^{-C^{-1}d(x,\xi)^2/(t-\tau)}}{|B(x,\sqrt{t-\tau})|}.
\end{equation}
\end{lemma}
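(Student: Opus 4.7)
The plan is to follow the classical Levi parametrix method adapted to the subelliptic, non-commutative setting. First I would work with the model operators obtained by freezing the coefficients at a base point $(x_0,t_0)$: set $L_{x_0,t_0} = \sum_{i,j=1}^m a_{ij}(x_0,t_0) X_i X_j - \partial_t$. Because $A(x_0,t_0)$ is symmetric and positive definite, a linear change of variable in $\R^m$ rewrites $L_{x_0,t_0}$ as a pure sum of squares $\sum_{k=1}^m \til X_k^2 - \partial_t$ for a modified system of H\"ormander vector fields satisfying \eqref{frc}. For this model operator one has a fundamental solution $\Gamma_{x_0,t_0}$ and, by the heat-kernel estimates of Jerison--S\'anchez-Calle and Nagel--Stein--Wainger combined with the Kusuoka--Stroock/Varopoulos doubling machinery, two-sided Gaussian bounds of the form \eqref{G1}, as well as the derivative estimates \eqref{G2} and \eqref{G3} for $\Gamma_{x_0,t_0}$.

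The second step is to construct $\Gamma$ for the variable-coefficient operator $H$ by seeking
\begin{equation*}
\Gamma(x,t,\xi,\tau) = \Gamma_{\xi,\tau}(x,t,\xi,\tau) + \int_\tau^t\!\int_{\Rn} \Gamma_{y,s}(x,t,y,s)\,\Phi(y,s,\xi,\tau)\,dy\,ds,
\end{equation*}
where the unknown density $\Phi$ satisfies a Volterra-type integral equation whose kernel is $H(\Gamma_{\xi,\tau})$ evaluated with the variable base point. The crucial point is that
\begin{equation*}
H(\Gamma_{\xi,\tau})(x,t,\xi,\tau) = \sum_{i,j}\bigl(a_{ij}(x,t)-a_{ij}(\xi,\tau)\bigr)\, X_iX_j \Gamma_{\xi,\tau}(x,t,\xi,\tau),
\end{equation*}
so the H\"older hypothesis \eqref{hol} contributes an extra factor $d_p(x,t,\xi,\tau)^\sigma$, which compensates the $(t-\tau)^{-1}$ singularity in the analogue of \eqref{G3} and reduces it to an integrable singularity of order $\sigma/2$. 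One then solves for $\Phi$ by a Neumann iteration, using the semigroup-type convolution estimate for Gaussian kernels (which in this setting follows from the volume doubling \eqref{dc}) to control each iterate.

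The upper bound in \eqref{G1} for $\Gamma$ then falls out of the uniform upper bound for the frozen $\Gamma_{\xi,\tau}$ together with the iterated parametrix estimate, and the derivative bounds \eqref{G2}, \eqref{G3} follow by applying $X_i$, $X_iX_j$, and $\partial_t$ to the representation above. The main obstacle, in my view, is this differentiation step: in the non-commutative setting the $x$-derivatives of the frozen kernel are no longer symmetric with the $\xi$-derivatives, so one cannot exchange them by a simple reflection trick, and second-order Lie derivatives inside the convolution must be controlled by Schauder-type estimates for the frozen operator---this is precisely where \eqref{hol} is indispensable, and where the lack of an intrinsic Taylor expansion along the non-commuting $X_j$'s makes the bookkeeping considerably more delicate than in the classical Euclidean case.

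Finally, the lower bound in \eqref{G1} is established independently by a Harnack-chain argument: iterate the interior parabolic Harnack inequality for $H$ along a subunit geodesic from $\xi$ to $x$, in time-steps of length comparable to $t-\tau$ whose number is proportional to $d(x,\xi)^2/(t-\tau)$. Combining this with the normalization $\int_{\Rn}\Gamma(x,t,\xi,\tau)\,d\xi = 1$ and the doubling estimate on $|B_d(x,\sqrt{t-\tau})|$ produces the exponential factor $e^{-Cd(x,\xi)^2/(t-\tau)}$ and completes the proof of \eqref{G1}.
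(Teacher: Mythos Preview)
The paper does not prove Lemma~\ref{Gaussbound} at all: it is stated without proof in Section~\ref{S:prelim} as a direct quotation of results from \cite{BBLU2} (and \cite{BBLU1}), which the authors invoke precisely because the H\"older hypothesis \eqref{hol} places them in the setting of that reference. So there is nothing in the paper to compare your argument against.

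That said, your outline is essentially correct and is in fact a faithful sketch of the strategy carried out in \cite{BBLU2}: freeze coefficients to obtain model sub-Laplacian heat operators, invoke the Gaussian estimates for those (which ultimately go back to Jerison--S\'anchez-Calle, Nagel--Stein--Wainger, Kusuoka--Stroock), run the Levi parametrix with a Volterra iteration exploiting the $d_p^\sigma$ gain from \eqref{hol}, and get the lower bound separately by Harnack chains. The caveat you flag about differentiating under the parametrix integral in the non-commuting setting is real and is exactly where the bulk of the technical work in \cite{BBLU2} lies; your sketch would not pass as a self-contained proof without filling that in, but as a plan it is the right one.
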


\subsection{The Harnack inequality and strong maximum principle}

We next state the Harnack inequality and the strong maximum principle
for the operator $H$, see \cite{BBLU1} and also \cite{BBLU2}.

\begin{thrm}\label{Harnack} Let $R>0$, $0<h_1<h_2<1$ and $\gamma\in (0,1)$. Then, there exists a
positive constant $C=C(h_1,h_2,\gamma,R)$ such that the following holds for
every $(\xi,\tau)\in\R^{n+1}$, $r\in (0,R]$. If 
\begin{equation*}
u\in \Gamma^2(C_r^-(\xi,\tau))\cap C(\overline{C_r^-(\xi,\tau)})
\end{equation*}
satisfies $Hu=0$, $u\geq 0$, in $C_r^-(\xi,\tau)$, then 
\begin{equation*}
u(x,t)\leq Cu(\xi,\tau)%
\mbox{ whenever $(x,t)\in \overline{C_{\gamma r,h_2r}^-(\xi,\tau)}\setminus 
     C_{\gamma r,h_1r}^-(\xi,\tau)$}.
\end{equation*}
\end{thrm}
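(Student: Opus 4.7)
\emph{Proof plan.} My approach is the classical one for parabolic Harnack inequalities: first establish a ``one-step'' forward Harnack between two nearby points, then iterate along a parabolic Harnack chain. Since the operator $H$ is not scale-invariant, constants will depend on $R$ through Lemma~\ref{Gaussbound}; this is harmless because $r\le R$ is fixed.

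\emph{Step 1 (one-step forward Harnack).} The key technical input is the following estimate: there exist $\theta\in(0,\tfrac12)$ and $K_0\ge 1$, depending only on $H$ and $R$, such that for every $\rho\in(0,R]$ and every nonnegative $u\in\Gamma^2(C_\rho^-(y_0,s_0))\cap C(\overline{C_\rho^-(y_0,s_0)})$ satisfying $Hu=0$ in $C_\rho^-(y_0,s_0)$,
\[
u(y,s)\le K_0\,u(y_0,s_0)\qquad\forall\,(y,s)\in\overline{B_d(y_0,\theta\rho)}\times[s_0-\tfrac34\rho^2,\,s_0-\tfrac14\rho^2].
\]
I would prove this by constructing a Green-function-like representation of $u(y_0,s_0)$ from the fundamental solution $\Gamma$. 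Using a smooth cutoff together with the formula \eqref{rep} applied to a suitable auxiliary problem, and correcting by a Levi-parametrix iteration to account for the commutator terms, one obtains a nonnegative kernel $K(z,\sigma)$ supported in a parabolic subregion $Q\subset C_\rho^-(y_0,s_0)$ (of size $\asymp\rho$ in space and $\asymp\rho^2$ in time) with $u(y_0,s_0)\ge \int_Q K\,u$ and $K\gtrsim 1/|B_d(y_0,\rho)|$ on a set of comparable measure; this lower bound comes from the lower Gaussian estimate in \eqref{G1}, while the correction terms are controlled via \eqref{G2}--\eqref{G3}. The desired pointwise estimate at $(y,s)$ then follows by combining this averaged lower bound with the interior Hölder regularity of $H$-caloric functions, available under the hypothesis \eqref{hol} through the Schauder theory developed in \cite{BBLU2}.

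\emph{Step 2 (Harnack chain).} A point $(x,t)\in\overline{C_{\gamma r,h_2r}^-(\xi,\tau)}\setminus C_{\gamma r,h_1 r}^-(\xi,\tau)$ satisfies $h_1^2 r^2\le\tau-t\le h_2^2 r^2$ and $d(x,\xi)\le\gamma r$. Choose $N=N(\gamma,h_1,h_2,\theta)$ large enough that, with $\rho:=\sqrt{2(\tau-t)/N}$, both $\rho\le(1-\gamma)r$ and $N\theta\rho\ge\gamma r$ hold. Join $x$ and $\xi$ by an almost-minimizing subunit curve (by Chow--Rashevsky, cf.\ Section~\ref{SS:not}) and split it into $N$ arcs of $d$-length $\le\theta\rho$, producing nodes $x=z_0,z_1,\dots,z_N=\xi$. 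Setting $\sigma_k:=t+k(\tau-t)/N$, each cylinder $C_\rho^-(z_k,\sigma_k)$ is contained in $C_r^-(\xi,\tau)$ and the pair $(z_{k-1},\sigma_{k-1})$ lies in the one-step region relative to $(z_k,\sigma_k)$, since $\sigma_k-\sigma_{k-1}=\rho^2/2\in[\rho^2/4,3\rho^2/4]$. Iterating Step~1 along the chain yields $u(x,t)\le K_0^N\,u(\xi,\tau)$, which is the desired conclusion with $C=K_0^N$.

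\emph{Main obstacle.} The delicate point is Step~1. In the Euclidean non-divergence setting the analogous estimate is classically obtained via the Krylov--Safonov--ABP machinery, which, as emphasized in the introduction, is not presently available in the subelliptic context. The Hölder hypothesis \eqref{hol} is precisely what allows this difficulty to be bypassed: it enables the construction in \cite{BBLU1,BBLU2} of a fundamental solution with two-sided Gaussian estimates (via Levi's parametrix around frozen-coefficient operators on a Carnot group), as well as the Schauder-type interior regularity underlying the representation argument. Without \eqref{hol}, both pillars of the proof collapse, which is why the bounded-measurable analogue of Theorem~\ref{Harnack} remains, in this sub-Riemannian setting, an important open problem.
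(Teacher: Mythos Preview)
The paper does not prove this theorem; it is quoted as a known result from \cite{BBLU1,BBLU2} (see the sentence immediately preceding the statement in Section~2.3). So there is no ``paper's own proof'' to compare against---Theorem~\ref{Harnack} is treated here as an input, not an output.

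Your sketch is broadly in the spirit of the approach in those references: under the H\"older assumption \eqref{hol}, build a fundamental solution with two-sided Gaussian bounds via the Levi parametrix, and then deduce a one-step Harnack estimate which is propagated along a parabolic chain. Your Step~2 (the chain argument) is essentially correct and the parameter choices check out. Step~1, however, is where all the real work lies, and what you have written is too schematic to count as a proof. Phrases like ``a Green-function-like representation \dots\ correcting by a Levi-parametrix iteration'' and ``combining \dots\ with the interior H\"older regularity'' gesture at the right ingredients, but the actual argument in \cite{BBLU2} requires a precise reproduction formula for $u$ and careful quantitative control of the error terms arising from the cutoff and the non-constant coefficients; none of this is supplied here. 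If you intend to flesh this out, that is the place to invest your effort---otherwise, citing \cite{BBLU1,BBLU2} as the paper does is the honest route.
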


%\begin{theorem}\label{weakmax} (Weak maximum principle)  Let $X=\{X_1,...,X_m\}$ be a system of smooth vector fields 
%satisfying \eqref{1.1x}, \eqref{1.1xumu} and 
%assume that $A=\{a_{ij}\}$ satisfies \eqref{1.2} and \eqref{1.2+}. Furthermore, assume that $a_{ij}$ is $C^\infty$-smooth
%for all $i,j\in\{1,..,m\}$. Let 
%$H$ be defined as in \eqref{1.1}, let 
%$\Omega\subset\mathbf R^n$ be a bounded domain  and let $T>0$. Assume that there exists 
%$\hat u\in \Gamma^2(\Omega_T)$ such that $L\hat u<0$ and $\hat u>0$ in $\Omega_T$. Then the following holds. If 
%$u\in \Gamma^2(\Omega_T)$, $Lu\geq 0$ in $\Omega_T$ and $\limsup u\leq 0$ on $\partial_p\Omega_T$, then $u\leq 0$ in $\Omega_T$.
% \end{theorem}

\begin{thrm}\label{strongmax}  
Let $\Omega \subset \Rn$ be a
connected, bounded open set, and let $T>0$. 
%Suppose that locally the weak maximum principle holds for the operator $L$ in $\Omega_T$.  
Let $u\in \Gamma ^{2}(\Omega _{T})$ and assume that $Lu\geq 0$, $u\leq 0$ in 
$\Omega _{T}$. Assume that $u(x_{0},t_{0})=0$ for some $(x_{0},t_{0})\in
\Omega _{T}$. Then $u(x,t)\equiv 0$ whenever $(x,t)\in \{\Omega _{T}\cap
\{t:t\leq t_{0}\}$.
\end{thrm}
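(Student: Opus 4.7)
The hypothesis $Lu\geq 0$ should be read as $Hu\geq 0$, i.e.\ $u$ is a parabolic subsolution --- only this reading is consistent with the past-directed propagation claimed (if one only has $Lu\geq 0$, the elliptic strong maximum principle at time $t_0$ propagates the zero along the $t=t_0$ slice but cannot reach earlier times). Setting $v:=-u$, we have a nonnegative supersolution of $H$ with $v(x_0,t_0)=0$, and the goal becomes to propagate the zero of $v$ throughout the past-connected component of $(x_0,t_0)$ in $\Omega_T$.

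First (local past-propagation), I would prove the local statement: if $v\geq 0$, $Hv\leq 0$ in $C_r^-(y,s)\Subset\Omega_T$, and $v(y,s)=0$, then $v\equiv 0$ on some past-neighborhood of $(y,s)$. The natural tool is the Harnack inequality of Theorem \ref{Harnack}, but that result is stated only for solutions; for the supersolution version I would either invoke a weak Harnack inequality of the form $(|C'|^{-1}\int_{C'}v^{p}\,dx\,dt)^{1/p}\leq C\,v(y,s)$ on a past sub-cylinder $C'\subset C_r^-(y,s)$ (which immediately gives $v\equiv 0$ on $C'$), or argue directly from the fundamental solution $\Gamma$ of Lemma \ref{Gaussbound} via the representation \eqref{rep}: expressed in a short past slab, $v$ equals an initial-trace integral against $\Gamma>0$ plus an integral of $Hv\leq 0$ against $\Gamma>0$, and the equality $v(y,s)=0$ forces the nonnegative initial-trace integrand $\Gamma(y,s,\xi,\tau_0)v(\xi,\tau_0)$ to vanish a.e.\ in $\xi$, whence $v(\cdot,\tau_0)\equiv 0$ on a full spatial cross-section. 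Either route relies crucially on the H\"older hypothesis \eqref{hol}, through the results of \cite{BBLU2}.

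Second (globalization), define $K:=\{(x,t)\in\Omega_T:t\leq t_0,\ v(x,t)=0\}$. This set is nonempty and closed in $\Omega_T\cap\{t\leq t_0\}$; by the local step it contains a full past-neighborhood of each of its points. To connect $(x_0,t_0)$ to an arbitrary $(x,t)\in\Omega_T\cap\{t\leq t_0\}$ I would use Chow--Rashevsky: pick a piecewise subunit curve in $\Omega$ joining $x_0$ to $x$, cover it by a finite chain of Carnot--Carath\'eodory balls, and apply the local step at each link at successively earlier time levels. Iterating downward in time then exhausts the past-connected component, so $K=\Omega_T\cap\{t\leq t_0\}$ in the component of $(x_0,t_0)$, and hence $u\equiv 0$ there.

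The delicate point is the local step: since Theorem \ref{Harnack} is stated for nonneg solutions only, one must either extract a weak-Harnack estimate from the Krylov--Safonov-type machinery in \cite{BBLU1,BBLU2}, or run the fundamental-solution argument with care --- the representation \eqref{rep} is posed on $\Rn\times(T_1,T_2)$ whereas $v$ is defined only in $\Omega_T$, so one needs a local cutoff $\phi v$ and has to absorb the commutator $H(\phi v)-\phi\,Hv$ into the source term. A secondary technical issue is that the spatial chaining in the globalization must be synchronized with a small time-decrement at each link, so that the past-cones $C_r^-$ at consecutive links genuinely overlap; this is possible since subunit curves have finite CC-length in $\Omega$ and the Gaussian bounds \eqref{G1} are uniform on compact subsets.
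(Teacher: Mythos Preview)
The paper does not prove this theorem: it is stated without proof as a known result, with the attribution ``see \cite{BBLU1} and also \cite{BBLU2}'' given just before Theorem~\ref{Harnack}. So there is no in-paper argument to compare against; the authors simply import the strong maximum principle from the literature.

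Your reading that $Lu\geq 0$ should be $Hu\geq 0$ is correct and is what the cited references establish. Your overall strategy---local past-propagation of the zero of $v=-u$ followed by globalization via subunit chains---is the right architecture, and it is essentially the skeleton of Bony's classical propagation-of-maxima argument \cite{Bo}, which is what underlies the results in \cite{BU}, \cite{BBLU1}, \cite{BBLU2}. Where your sketch is incomplete is the local step. Neither of your two proposed routes is cleanly available from what the paper provides: Theorem~\ref{Harnack} is stated only for nonnegative \emph{solutions}, and the weak Harnack inequality for supersolutions is not derived here (it does exist in the BBLU framework, but you would have to cite it separately). The fundamental-solution route via \eqref{rep} is more problematic: once you localize by a cutoff $\phi$, the commutator $H(\phi v)-\phi Hv$ involves $X_iv$ and $v$ with coefficients supported where $\phi$ varies, and you have no sign control on those terms---so the inequality $Hv\leq 0$ does not survive the cutoff in a usable way.

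The argument actually used in the cited sources is Bony's: one shows directly that if $v\geq 0$, $Hv\leq 0$, and $v$ vanishes at a point, then $v$ vanishes along the integral curves of $\pm X_1,\dots,\pm X_m$ and of $-\partial_t$ emanating backward from that point (this is a pointwise barrier/ODE argument, not a Harnack inequality). H\"ormander's condition \eqref{frc} then guarantees, via Chow--Rashevsky, that these trajectories fill the whole past slab $\Omega_T\cap\{t\leq t_0\}$. Your globalization paragraph is compatible with this; it is the local mechanism that should be replaced by Bony-type propagation rather than Harnack or the representation formula.
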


%\begin{proof} It follows fromTheorem 1.2 in \cite{BU}.\end{proof}

\subsection{The Dirichlet problem}

In the following we let $D$ be any bounded open subset of $\R^{n+1}$ and we
study the Dirichlet problem 
\begin{eqnarray}  \label{dirp}
Hu=0\mbox{ in $D$, $u=f$ on $\partial_p D$},
\end{eqnarray}
with $f\in C(\partial_pD)$. Here, $\partial_pD$
denotes the parabolic boundary of $D$. If $u:D\to\R$ is a smooth function satisfying 
$H u=0$ in $D$, then we say that $u$ is $H$-parabolic in $D $. We
denote by $P(D)$ the linear space of functions which are $H$-parabolic in $D$. 

We 
say that $D$ is $H$-regular if for any $f\in C(\partial_p D)$ there exists a unique function $H_f^D\in P(D)$ such that $\lim_{(x,t)\to
(x_0,t_0)}H_f^D(x,t)=f(x_0,t_0)$ for every $(x_0,t_0)\in\partial_p D$. Following the arguments in \cite{LU}, see in particular 
Theorems 6.5 and 10.1, we can easily construct a basis for the Euclidean topology of $\mathbb R^{n+1}$ which is made 
of cylindrical $H$-regular sets.
Furthermore, if $D$ is $H$-regular, then in view of Theorem \ref{strongmax} (one actually only needs the weak maximum principle) for every fixed $(x,t)\in D$ the map $f\mapsto H_f^D(x,t)$ defines a positive linear functional on $C(\partial_p D)$.
By the Riesz representation theorem there exists a unique Borel
measure $\omega =\omega_D$, supported in $\partial_p D$, such that 
\begin{equation}  \label{parabolic.measure.L.regular}
H_f^D(x,t)=\int_{\partial_p D}f(y,s)d\omega^{(x,t)}(y,s),\quad\text{ for every }\,f\in C(\partial_p D).
\end{equation}
We will refer to $\omega^{(x,t)}=\omega_D^{(x,t)}$ as the $H$-\emph{parabolic measure} relative to $D$
and $(x,t)$. 

A lower semi-continuous
function $u:D\to\,]-\infty,\infty]$ is said to be $H$-superparabolic
in $D$ if $u<\infty$ in a dense subset of $D$ and if 
\begin{equation*}
u(x,t)\geq\int_{\partial V}u(y,s)d\omega_V^{(x,t)}(y,s),
\end{equation*}
for every open $H$-regular set $V\subset\overline{V}\subset D$ and for every 
$(x,t)\in V$. We denote by $\overline{S}(D)$ the set of $H$-superparabolic functions in $D$, and by $\overline{S}^+(D)$ the set of the
functions in $\overline{S}(D)$ which are nonnegative. A function $v:D\rightarrow [-\infty,\infty[$ is said to be $H$-subparabolic in $D$
if $-v\in\overline{S}(D)$ and we write $\underline{S}(D):=-\overline{S}(D)$.
As the collection of $H$-regular sets is a basis for the Euclidean topology, it follows that $\overline{S}(D)\cap\underline{S}(D)=P(D)$. Finally, we recall that $H_f^D$ can be realized as
the generalized solution in the sense of Perron-Wiener-Brelot-Bauer to the
problem in \eqref{dirp}. In particular, 
\begin{equation}  \label{parabolic.measureumu}
\inf\overline{\mathcal{U}}_f^D=\sup\underline{\mathcal{U}}_f^D=H_f^D,
\end{equation}
where we have indicated with $\overline{\mathcal{U}}_f^D$ the collection of all $u\in\overline{S}(D)$ such that $\inf_{D}u>-\infty$, 
and
\[
\liminf_{(x,t)\to(x_0,t_0)}u(x,t)\geq f(x_0,t_0),
\,\forall\,(x_0,t_0)\in\partial_p D,
\]
 and with $\underline{\mathcal{U}}_f^D$ the collection of all $u\in\underline{S}(D)$ for which $\sup_{D}u<\infty$, and 
\[
\limsup_{(x,t)\to (x_0,t_0)}u(x,t)\leq f(x_0,t_0),
\,\forall\,(x_0,t_0)\in\partial_p D.
\]

\begin{lemma}\label{gendir} 
Let $D\subset\R^{n+1}$ be a bounded open set, let $f\in
C(\partial_pD)$ and let $u$ be the generalized
Perron-Wiener-Brelot-Bauer solution to the problem in \eqref{dirp}, i.e., $u=H_f^D$
where $H_f^D$ be defined as in \eqref{parabolic.measureumu}. Then $u\in
\Gamma^2(D)$.
\end{lemma}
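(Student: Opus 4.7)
The plan is to deduce the statement from the axiomatic identity $\overline S(D) \cap \underline S(D) = P(D)$ recorded just above, together with the fact that every element of $P(D)$ is, by the definition of $H$-parabolic given at the start of Section 2.4, a smooth classical solution of $Hu = 0$ and hence automatically belongs to $\Gamma^2(D)$. Thus it suffices to verify that $u = H_f^D$ lies simultaneously in $\overline S(D)$ and in $\underline S(D)$.

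For the first inclusion I would use the two-sided envelope identity \eqref{parabolic.measureumu}, which expresses $u$ as $\inf \overline{\mathcal U}_f^D$. Every element of $\overline{\mathcal U}_f^D$ is $H$-superparabolic by construction, and the family is saturated, i.e., closed under finite minima and under the sweeping operation $v \mapsto H_{v|_{\partial_p V}}^V$ over every $H$-regular set $V$ with $\overline V \subset D$. By the standard Perron-Wiener-Brelot-Bauer argument---valid here because the $H$-regular cylindrical sets built in \cite{LU} form a basis for the Euclidean topology of $\R^{n+1}$---the pointwise infimum of such a saturated family is, after lower semicontinuous regularization, again $H$-superparabolic. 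The resolutivity of the continuous datum $f$, expressed precisely by the identity $\inf \overline{\mathcal U}_f^D = \sup \underline{\mathcal U}_f^D$, guarantees that no regularization is actually lost, and hence $u \in \overline S(D)$. Applying the same reasoning to $-u = \inf \overline{\mathcal U}_{-f}^D$ gives $u \in \underline S(D)$.

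Combining the two inclusions with $\overline S(D) \cap \underline S(D) = P(D)$ and $P(D) \subset \Gamma^2(D)$ yields the conclusion. The main obstacle is verifying, in the present subelliptic parabolic context, the Perron-type step that infima of saturated families of $H$-superparabolic functions remain $H$-superparabolic; concretely, this amounts to passing to the infimum inside the mean value inequality
\[
v(x,t) \geq \int_{\partial_p V} v(y,s)\, d\omega_V^{(x,t)}(y,s), \qquad v \in \overline S(D),
\]
on every $H$-regular cylinder $V$ with $\overline V \subset D$. The interior Harnack inequality of Theorem \ref{Harnack} together with the Gaussian bounds of Lemma \ref{Gaussbound} supply the uniform control and monotone convergence needed to carry out this step, so the classical potential-theoretic argument transfers without essentially new input. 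The ultimate appeal $P(V) \subset \Gamma^2(V)$ at the end draws on the interior Schauder-type theory of \cite{BBLU2}, which is in turn the reason the H\"older hypothesis \eqref{hol} on the coefficients $a_{ij}$ is imposed.
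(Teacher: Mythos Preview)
Your argument is correct in outline, but it takes a genuinely different route from the paper. The paper's proof is a single sentence: it simply invokes Theorem~1.1 of \cite{U}. Your approach instead works internally within the axiomatic potential-theoretic framework laid out in Section~2.4, combining the identity $\overline S(D)\cap\underline S(D)=P(D)$ (recorded just before the lemma) with a Perron-type argument showing that $u=H_f^D$ lies in both $\overline S(D)$ and $\underline S(D)$. What you gain is a more self-contained derivation that makes explicit which potential-theoretic ingredients are really in play; what the paper's citation gains is brevity. One caveat worth flagging: since $P(D)$ is \emph{defined} as the space of smooth solutions, the identity $\overline S(D)\cap\underline S(D)=P(D)$ already encodes a regularity statement of essentially the same strength as the lemma itself, so your argument relocates rather than eliminates the appeal to the interior Schauder theory of \cite{BBLU2}/\cite{U}. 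This is fine---the paper asserts that identity as established---but it is worth being aware that the heavy lifting has been moved upstream rather than removed.
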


\begin{proof}
This follows from Theorem 1.1 in \cite{U}. 
\end{proof}

In the following we are concerned with the issue of regular boundary points
and we note, concerning the solvability of the Dirichlet problem for the
operator $H$, that in \cite{U} Uguzzoni developes what he refers to as a ``cone
criterion'' for non-divergence equations modeled on H{\"o}rmander vector
fields. This is a generalization of the well-known positive density condition of classical potential theory. We next describe his result in the setting of domains of
the form $\Omega_T=\Omega\times (0,T)$, where $\Omega\subset \Rn$ is
assumed to be a bounded domain. In \cite{U}
A bounded open set $\Omega$ is said to have \emph{outer positive $d$-density at $x_0\in\partial\Omega$} if there exist $r_0$, $\theta>0$ such that 
\begin{equation}  \label{cone}
|B_d(x_0,r)\setminus\bar\Omega|\geq\theta |B_d(x_0,r)|,\ \ \text{for all}\  
r\in (0,r_0).
\end{equation}
Furthermore, if $r_0$ and $\theta$ can be chosen independently of $x_0$ then one says that $\Omega$ satisfies the outer positive $d$-density condition. The
following lemma is a special case of Theorem 4.1 in \cite{U}.

\begin{lemma}\label{Dirichlet0} 
Assume that $\Omega$ satisfies the outer positive $d$-density condition. Given $f\in C(\partial_p\Omega_T)$ and $g\in\Gamma^{\beta}(\Omega_T)$ for some $0<\beta\leq\sigma$, where $\sigma$ is the H{\"o}lder
exponent in \eqref{hol}, there exists a unique solution $u\in\Gamma^{2+\beta}(\Omega_T)\cap C(\Omega_T\cup\partial_p\Omega_T)$ to the problem 
\[
Hu=g\mbox{ in $\Omega_T$,\ \  \ \  $u=f$ on $\partial_p\Omega_T$}.
\]
In particular, $\Om_T$ is $H$-regular for the Dirichlet problem \eqref{dirp}.
\end{lemma}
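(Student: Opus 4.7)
The plan is to deduce the lemma from Theorem 4.1 of \cite{U}, which asserts unique solvability of the Dirichlet problem \eqref{dirp} for $H$ on bounded open sets $D \subset \R^{n+1}$, provided every point of the parabolic boundary $\partial_p D$ satisfies an outer positive $d_p$-density condition. My only task is therefore to verify that cylindrical domains $\Omega_T = \Omega \times (0,T)$, with $\Omega$ satisfying \eqref{cone} uniformly on $\partial \Omega$, meet this parabolic density hypothesis at every point of $\partial_p \Omega_T$.

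To check this, I would fix $(x_0,t_0) \in \partial_p \Omega_T$ and split into cases according to its position. If $(x_0,t_0) \in S_T = \partial \Omega \times (0,T)$, then for $r$ small enough the parabolic cylinder $C_r(x_0,t_0) = B_d(x_0,r) \times (t_0 - r^2, t_0 + r^2)$ is contained in $\R^n \times (0,T)$, and \eqref{cone} yields
\begin{equation*}
|C_r(x_0,t_0) \setminus \overline{\Omega_T}| \;=\; |B_d(x_0,r) \setminus \overline{\Omega}| \cdot 2r^2 \;\geq\; \theta \, |C_r(x_0,t_0)|.
\end{equation*}
If instead $(x_0,t_0) \in \Omega \times \{0\}$, then the lower half $B_d(x_0,r) \times (t_0 - r^2, t_0)$ of $C_r(x_0,t_0)$ lies entirely outside $\Omega_T$, producing density at least $1/2$. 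At corner points $(x_0,0)$ with $x_0 \in \partial \Omega$ the two contributions combine and the conclusion follows a fortiori. In all cases the outer $d_p$-density of $\R^{n+1} \setminus \overline{\Omega_T}$ at $(x_0,t_0)$ is bounded below by a uniform positive constant.

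With the density condition in hand, the cited theorem produces a function $u \in \Gamma^{2+\beta}(\Omega_T) \cap C(\Omega_T \cup \partial_p \Omega_T)$ that solves $Hu = g$ in $\Omega_T$ and equals $f$ on $\partial_p \Omega_T$; the $\Gamma^{2+\beta}$ regularity is underpinned by the Schauder-type interior estimates for $H$ from \cite{BBLU2}. Uniqueness is immediate from the strong maximum principle (Theorem \ref{strongmax}): if $u_1, u_2$ are two such solutions, then $w = u_1 - u_2$ is $H$-parabolic on $\Omega_T$ with $w = 0$ on $\partial_p \Omega_T$, and Theorem \ref{strongmax} applied to $\pm w$ forces $w \equiv 0$. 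The substance of the proof lies entirely in \cite{U}; my contribution is the routine verification of its hypotheses in the cylindrical setting, and the principal care required is ensuring the density calculation works uniformly in $r$ at every type of parabolic boundary point, for which the hardest case is the corner $\partial \Omega \times \{0\}$ where both the spatial and temporal exterior of $\Omega_T$ must be kept track of simultaneously.
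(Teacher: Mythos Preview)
Your proposal is correct and matches the paper's approach exactly: the paper does not give a proof but simply records that the lemma is a special case of Theorem 4.1 in \cite{U}. Your added verification that the uniform outer positive $d$-density of $\Omega$ forces the parabolic density hypothesis of \cite{U} at every point of $\partial_p\Omega_T$ is routine and correct, and is more detail than the paper itself supplies.
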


\subsection{NTA domains}\label{SS:nta}

In this section we recall the notion of NTA domain with respect to the control distance $d(x,y)$ induced by the system $X = \{X_1,...,X_m\}$. We recall that, when $d(x,y) = |x-y|$, the notion of NTA domain was introduced  in \cite{JK} in connection with the study of the boundary behavior of nonnegative harmonic functions. The first study of NTA domains in a sub-Riemannian context was conducted in \cite{CG}, where a large effort was devoted to the nontrivial question of the construction of examples. In that paper the relevant Fatou theory was also developed and, in particular, the doubling condition for harmonic measure, and the comparison theorem for quotients of nonnegative solutions of sub-Laplacians. Subsequently, in the papers \cite{CGN3}, \cite{CGN4} the notion of NTA domain was combined with an intrinsic outer ball condition to obtain the complete solvability of the Dirichlet problem. 

Given a bounded open set $\Om\subset \Rn$, we recall that a ball $B_d(x,r)$ is $M$-non-tangential in $\Omega$ (with respect to the metric $d$) if 
\[
M^{-1}r<d(B_d(x,r),\partial\Omega)<Mr. 
\]
Furthermore, given $x,y\in\Omega$ a
sequence of $M$-non-tangential balls in $\Omega$, $B_d(x_1,r_1)$,..., $B_d(x_p,r_p)$, is called a Harnack chain of length $p$ joining $x$ to $y$ if 
$x\in B_d(x_1,r_1)$, $y\in B_d(x_p,r_p)$, and $B_d(x_i,r_i)\cap
B_d(x_{i+1},r_{i+1})\neq \varnothing$ for $i\in\{1,...,p-1\}$. We
note that in this definition consecutive balls have comparable radii.

\begin{dfn}\label{D:NTA}
We say that a connected, bounded open set $\Om\subset \Rn$ is a
\emph{non-tangentially accessible domain} with respect to the system $X =
\{X_1,...,X_m\}$ ($NTA$ domain, hereafter) if there exist $M$,
$r_0>0$ for which:
\begin{itemize}
\item[(i)] (Interior corkscrew condition) For any $x_0\in\partial \Om$ and $r\leq r_0$
there exists $A_r(x_0)\in \Om$ such that
$\frac{r}M<d(A_r(x_0),x_0)\leq r$ and $d(A_r(x_0),\partial
\Om)>\frac{r}M$. (This implies that $B_d(A_r(x_0),\frac{r}{2M})$ is
$(3M)$-nontangential.)
\item[(ii)] (Exterior corkscrew condition) $ \Om^c=\Rn\setminus  \Om$ satisfies
property (i).
\item[(iii)] (Harnack chain condition) There exists $C(M)>0$ such that for any $\epsilon>0$ and $x,y\in \Om$ such
that $d(x,\partial \Om)>\epsilon$, $d(y,\partial \Om)>\epsilon$, and $d(x,y)<C\epsilon$, there exists a Harnack
chain joining $x$ to $y$ whose length depends on $C$ but not on $\epsilon$.
\end{itemize}
\end{dfn}

We observe that the Chow-Rashevski accessibility theorem implies that
the metric space $(\mathbb{R}^n,d)$ be locally compact, see \cite{GN}. Furthermore, for any
bounded set $\Omega\subset\mathbb{R}^n$ there exists $R_0=R_0(\Omega)>0$
such that the closure of balls $B(x_0,R)$ with $x_0\in\Omega$ and $0<R<R_0$
are compact. We stress that metric balls of large
radii fail to be compact in general, see \cite{GN}. In view of these observations, for a
given NTA domain $\Omega\subset\mathbb{R}^n$ with constant $M$ and $r_0$
we will always assume, following \cite{CG}, that the constant $r_0$ has been
adjusted in such a way that the closure of balls $B(x_0,R)$, with $%
x_0\in\Omega$ and $0<R<r_0$, be compact.

We note the following lemma which will prove useful in the sequel
and which follows directly from Lemma \ref{Dirichlet0} and Definition \ref{D:NTA}. In its statement  the number $\sigma$ denotes the H{\"o}lder exponent in \eqref{hol}. 

\begin{lemma}\label{L:cork}
Let $\Om\subset \Rn$ be $NTA$ domain, then there exist constants $C,
R_1$, depending on the $NTA$ parameters of $\Om$, such that for every
$y\in \p \Om$ and every $0<r<R_1$ one has,
\[
C  |B_d(y,r)|\leq \min \{|\Om\cap B_d(y,r)|,|\Om^c \cap B_d(y,r)|\}
\leq C^{-1} |B_d(y,r)|.
\]
In particular, every $NTA$ domain has outer positive $d$-density and therefore, in view of Lemma \ref{Dirichlet0}, given $f\in C(\partial_p\Omega_T)$,
there exists a unique solution $u\in\Gamma^{2+\sigma}(\Omega_T)\cap
C(\Omega_T\cup\partial_p\Omega_T)$ to the Dirichlet problem \eqref{dirp}.
In particular, $\Om_T$
is $H$-regular.
\end{lemma}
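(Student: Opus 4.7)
The plan is to establish both inequalities in the chain by exploiting the corkscrew conditions in Definition \ref{D:NTA} together with the local doubling property \eqref{dc} of Carnot--Carath\'eodory balls. The upper bound is essentially trivial: since both $\Omega\cap B_d(y,r)$ and $\Omega^c\cap B_d(y,r)$ are contained in $B_d(y,r)$, one has $\min\{|\Omega\cap B_d(y,r)|,|\Omega^c\cap B_d(y,r)|\}\le |B_d(y,r)|$, so the right-hand inequality holds with any $C\le 1$. Hence the substantive content is the left-hand inequality, for which I would treat the two pieces of the minimum in parallel.

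For the interior estimate, fix $y\in\partial\Omega$ and $r<R_1$, where $R_1$ will be chosen at the end to be smaller than the NTA parameter $r_0$ and smaller than the radius $R_0$ from \eqref{dc} (applied to the bounded set $U=\overline\Omega$ enlarged suitably). By the interior corkscrew condition applied at scale $r/2$, there exists a point $A=A_{r/2}(y)\in\Omega$ with $d(A,y)\le r/2$ and $d(A,\partial\Omega)>r/(2M)$. The first inequality implies $B_d(A,r/(2M))\subset B_d(y,r)$ (since $M\ge 1$), while the second implies $B_d(A,r/(2M))\subset\Omega$. Therefore
\[
|\Omega\cap B_d(y,r)|\ \ge\ |B_d(A,r/(2M))|.
\]
To compare this with $|B_d(y,r)|$, note that $d(A,y)\le r/2\le r$ forces $B_d(y,r)\subset B_d(A,2r)$, and iterating the doubling inequality \eqref{dc} a bounded number of times (depending only on $M$, specifically $k$ iterations with $2^k\ge 4M$) yields $|B_d(A,2r)|\le C_1^k|B_d(A,r/(2M))|$. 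Combining these gives the desired lower bound $|\Omega\cap B_d(y,r)|\ge C|B_d(y,r)|$ with $C=C_1^{-k}$ depending only on the NTA parameter $M$ and the local doubling constant $C_1$.

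The exterior estimate $|\Omega^c\cap B_d(y,r)|\ge C|B_d(y,r)|$ is proved identically, using the exterior corkscrew condition (item (ii) of Definition \ref{D:NTA}) to produce a point $A'\in\Omega^c$ with the same separation property, and applying the same doubling argument; no new ingredient is needed. The constant $R_1$ is then taken to be the minimum of $r_0$ and the doubling threshold $R_0$ from \eqref{dc}, adjusted if necessary so that balls of radius $R_1$ centered on $\overline\Omega$ have compact closure.

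The consequence is immediate. The estimate just proved says precisely that \eqref{cone} holds at every $y\in\partial\Omega$ with $\theta=C$ and threshold $R_1$, both independent of $y$; thus $\Omega$ satisfies the outer positive $d$-density condition. Invoking Lemma \ref{Dirichlet0} with $g\equiv 0$ and $\beta=\sigma$ yields the unique solution $u\in\Gamma^{2+\sigma}(\Omega_T)\cap C(\Omega_T\cup\partial_p\Omega_T)$ to \eqref{dirp}, and in particular $\Omega_T$ is $H$-regular. I do not anticipate any genuine obstacle in this argument; the only point requiring a bit of care is choosing the scale $r/2$ for the corkscrew (rather than $r$) so that the resulting non-tangential ball sits inside $B_d(y,r)$, and verifying that the number of doubling steps needed depends only on $M$.
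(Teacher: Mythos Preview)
Your proof is correct and is precisely the argument the paper has in mind: the authors do not spell out a proof but simply remark that the lemma ``follows directly from Lemma \ref{Dirichlet0} and Definition \ref{D:NTA}'', and what you have written is the standard expansion of that remark via the corkscrew conditions and the doubling property \eqref{dc}. Your care in applying the corkscrew at scale $r/2$ so that the non-tangential ball lands inside $B_d(y,r)$, and in counting doubling steps in terms of $M$, is exactly right.
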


Assume that $\Omega\subset\mathbb{R}^n$ is a non-tangentially accessible
domain with respect to the system $X=\{X_1,...,X_m\}$ and with parameters $%
M,r_0$. Let $T>0$ and define $\Omega_T=\Omega\times (0,T)$. Based on
Definition \ref{D:NTA}, for every $(x_0,t_0)\in S_T$, $0<r<r_0$, we introduce the following points of reference
whenever 
\begin{align}\label{points2}
{A}_r^+(x_0,t_0) & =(A_r(x_0),t_0+2r^2),
\\
A_r^-(x_0,t_0) & =(A_r(x_0),t_0-2r^2),
\notag\\
A_r(x_0,t_0) & =(A_r(x_0),t_0).
\notag
\end{align}

We note here that according to Lemma 6.4 in \cite{LU}, $\mathbb{R}^n \setminus
B_d(x_0,R)$ satisfies condition (ii) in Definition \ref{D:NTA}, and thus it
also satisfies the uniform outer positive $d$-density condition, and one can solve
the Dirichlet problem there. Also note that the same is true of the
intersection of two sets that satisfy condition (ii) in Definition \ref{D:NTA}. This is used to prove the following lemma (Theorem 6.5 in \cite{LU}) which states that one can approximate any bounded open set with a set where one can solve the Dirichlet problem  \eqref{dirp}.

\begin{lemma}
\label{regular} Let $D \subset \Rn$ be a bounded open set. Then, for
every $\delta >0$ there exists a set $D_\delta$ such that $\{x \in D:
d(x,\partial D) > \delta \} \subset D_\delta \subset D$, and $D_\delta$
satisfies the uniform outer positive $d$-density condition.
\end{lemma}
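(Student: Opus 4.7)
The plan is to construct $D_\delta$ by excising from $D$ a thin neighborhood of $\partial D$ formed by finitely many closed metric balls, and to inherit the uniform outer positive $d$-density from the complement of that union of balls, which satisfies condition (ii) of Definition \ref{D:NTA} by the remark preceding the lemma statement.

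Since $D$ is bounded, $\partial D$ is compact. Fix any $\rho$ with $0<\rho<\delta$, and from the open cover $\{B_d(y,\rho/2)\}_{y\in\partial D}$ of $\partial D$ extract a finite subcover centered at points $y_1,\ldots,y_N\in\partial D$. Setting
\[
F=\bigcup_{j=1}^N\overline{B_d(y_j,\rho)}, \qquad D_\delta=D\setminus F,
\]
one obtains an open subset of $D$. The strictly smaller radius $\rho/2$ used for the subcover ensures $\partial D\subset \operatorname{int}(F)$. On the other hand, every $z\in\overline{B_d(y_j,\rho)}$ satisfies $d(z,\partial D)\le d(z,y_j)\le\rho<\delta$ because $y_j\in\partial D$, so $F$ is disjoint from $\{x\in D:d(x,\partial D)>\delta\}$. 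Combining these two observations yields the required sandwich $\{x\in D:d(x,\partial D)>\delta\}\subset D_\delta\subset D$.

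To verify the uniform outer positive $d$-density, I would write
\[
\mathbb R^n\setminus F=\bigcap_{j=1}^N\bigl(\mathbb R^n\setminus\overline{B_d(y_j,\rho)}\bigr).
\]
By Lemma 6.4 of \cite{LU} each factor satisfies condition (ii) of Definition \ref{D:NTA}, and by the remark preceding the present lemma that property is preserved under finite intersections; hence $\mathbb R^n\setminus F$ itself satisfies (ii), and in particular enjoys uniform outer positive $d$-density with some $\theta>0$ and some $r_1>0$ depending only on the system $X$ and on $\rho$. Because $\partial D\subset\operatorname{int}(F)$, a boundary-point chase shows that every $y\in\partial D_\delta$ lies in $\partial F=\partial(\mathbb R^n\setminus F)$. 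Since the inclusion $D_\delta\subset \mathbb R^n\setminus F$ gives $\overline{D_\delta}\subset\overline{\mathbb R^n\setminus F}$, one has for $0<r<r_1$
\[
|B_d(y,r)\setminus\overline{D_\delta}|\ge |B_d(y,r)\setminus\overline{\mathbb R^n\setminus F}|\ge \theta\,|B_d(y,r)|,
\]
which is the required density inequality for $D_\delta$ at the boundary point $y$.

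The main obstacle is exactly this density transfer: one must ensure that $\partial D_\delta\subset\partial F$ so that the density of $\mathbb R^n\setminus F$ at $y$ is indeed in force, and this is precisely why the covering is performed with two distinct radii $\rho/2<\rho$, arranging $\partial D$ to sit compactly inside $\operatorname{int}(F)$. Once this topological bookkeeping is in place, the density estimate is merely the density estimate for a single complement of a metric ball, extended to a finite intersection by the intersection remark.
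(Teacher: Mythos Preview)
Your construction is correct and is precisely the argument the paper has in mind: the paper does not prove this lemma but cites Theorem~6.5 of \cite{LU}, after having recorded exactly the two ingredients you use, namely that $\mathbb{R}^n\setminus B_d(x_0,R)$ satisfies condition~(ii) of Definition~\ref{D:NTA} (Lemma~6.4 of \cite{LU}) and that this property is stable under intersection. Your two-radius trick $\rho/2<\rho$ to force $\partial D\subset\operatorname{int}(F)$, and hence $\partial D_\delta\subset\partial F$, is the correct bookkeeping step, and the transfer of the density estimate from $\mathbb{R}^n\setminus F$ to $D_\delta$ via the inclusion $\overline{D_\delta}\subset\overline{\mathbb{R}^n\setminus F}$ is sound.
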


To apply the Harnack inequality to the equation \eqref{Hu} in a cylinder $\Om_T$, we will need to connect two points of $\Om_T$ with a suitable Harnack chain of parabolic cylinders. We thus introduce the relevant geometric definition.

\begin{dfn}\label{def1.5}
Let $(y_1, s_1)$, $(y_2, s_2)\in\Omega_T$, with  $s_2>s_1$ . Suppose that  $( s_2 - s_1
)^{1/2}\geq\eta^{ - 1} d(y_1,y_2)$ for some $\eta>1$, and that $%
d(y_1,\partial\Omega)>\epsilon$, $d(y_2,\partial\Omega)>\epsilon$, $%
(T-s_2)>\epsilon^2$, $s_1>\epsilon^2$ and $d_p((y_1,
s_1),(y_2,s_2))<c\epsilon$ for some $\epsilon>0$. We say that $\{C_{\hat
r_i,\hat \rho_i} ( \hat y_i, \hat s_i ) \}_{i=1}^\ell$ is a \emph{parabolic Harnack chain
of length $\ell$ connecting $(y_1, s_1)$ to $(y_2, s_2)$,} if $\hat r_i,\hat
\rho_i, \hat y_i, \hat s_i $ satisfy the following: 
\begin{itemize}
\item[(i)] $c(\eta)^{ - 1 }\leq\frac{\hat\rho_i}{\hat r_i}\leq c(\eta)$ for 
$i = 1, 2, \dots, \ell$,  
\item[(ii)] $\hat s_{ i + 1} - \hat s_i \geq c(\eta)^{ - 1 } \hat r_i^2,$
for  $i = 1, 2, \dots, \ell-1$,  
\item[(iii)] $B_d(\hat y_i,\hat r_i)$ is $M$-nontangential 
in $\Omega$  for  $i = 1, 2, \dots, \ell$, 
\item[(iv)] $(y_1, s_1)\in C_{\hat r_1,\hat \rho_1} ( \hat y_1, \hat s_1 ),\ (y_2,
s_2)\in C_{\hat r_\ell,\hat \rho_\ell} ( \hat y_\ell, \hat s_\ell )$,  
\item[(v)] $C_{\hat r_{i+1},\hat \rho_{i+1}} ( \hat y_{i+1}, \hat s_{i+1} ) \cap
C_{\hat r_i,\hat \rho_i} ( \hat y_i, \hat s_i )\neq\varnothing$ for  $i
= 1, 2, \dots, \ell-1$.
\end{itemize}
\end{dfn}

\begin{lemma}\label{lemmaacc} 
Let $\Omega\subset\Rn$
be a NTA-domain. Given  $T>0$ and $(y_1,
s_1)$, $(y_2, s_2)\in\Omega_T$, suppose that $s_2>s_1$, $( s_2 - s_1
)^{1/2}\geq\eta^{ - 1} d(y_1,y_2)$ for some $\eta>1$, that $%
d(y_1,\partial\Omega)>\epsilon$, $d(y_2,\partial\Omega)>\epsilon$, $%
(T-s_2)>\epsilon^2$, $s_1>\epsilon^2$ and that $d_p((y_1, s_1),(y_2,
s_2))<c\epsilon$ for some $\epsilon>0$. Then, there exists a parabolic
Harnack chain $\{ C_{\hat r_i,\hat \rho_i} ( \hat y_i, \hat s_i ) \}_{i=1}^\ell$,
connecting $(y_1, s_1)$ to $(y_2, s_2)$ in the sense of Definition \ref{def1.5}. Furthermore, the
length $\ell$ of the chain can be chosen to depend only on $\eta$ and $c$, but not on $\epsilon$.
\end{lemma}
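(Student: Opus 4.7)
The plan is to lift the spatial NTA Harnack chain provided by Definition \ref{D:NTA}(iii) to a parabolic one by attaching compatible time coordinates. First, since $d_p((y_1,s_1),(y_2,s_2)) < c\epsilon$ forces $d(y_1, y_2) < c\epsilon$, and both $y_1, y_2$ lie at distance larger than $\epsilon$ from $\partial\Omega$, Definition \ref{D:NTA}(iii) yields a spatial Harnack chain $\{B_d(x_i, r_i)\}_{i=1}^p$ of $M$-nontangential balls with consecutive overlap and comparable consecutive radii, where $p=p(c,M)$ is independent of $\epsilon$. Consequently all $r_i$ are comparable to $\epsilon$ through the $M$-nontangentiality relation $M^{-1}r_i < d(B_d(x_i,r_i),\partial\Omega) < Mr_i$.

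Next I would refine this skeleton. Choose a constant $K=K(M,\eta)\geq 2M$, set $\hat r_i = r_i/K$ and $\hat \rho_i = \kappa \hat r_i$ for a suitable $\kappa=\kappa(\eta)>0$; the factor $K\geq 2M$ ensures that any sub-ball of radius $\hat r_i$ centered inside $B_d(x_i,r_i)$ remains $M$-nontangential in $\Omega$. To maintain spatial overlap after the shrinking, I would insert $O(K)$ intermediate centers along each link of the original chain, producing a refined spatial skeleton of length $O(pK)$, which still depends only on $c, M, \eta$. Then I would attach time coordinates $\hat s_i$ advancing monotonically from $s_1$ to $s_2$ with jumps of size approximately $c(\eta)^{-1}\hat r_i^2$. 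The parabolic hypothesis $(s_2-s_1)^{1/2} \geq \eta^{-1} d(y_1,y_2)$ guarantees that $s_2-s_1$ is at least commensurate with the spatial travel budget, while the upper bound $s_2-s_1 < c^2\epsilon^2$, which is of order $K^2\hat r_i^2$, means that should extra time-only cylinders need to be appended at a fixed spatial center to cover any remaining time gap, their number is still bounded by a constant depending only on $c, M, \eta$.

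Verification of (i)--(v) of Definition \ref{def1.5} would then be direct: (i) holds since $\hat\rho_i/\hat r_i=\kappa$; (ii) holds by construction of the time jumps; (iii) holds by the sub-ball argument above; (iv) is secured by placing the first and last cylinders so that the slack $\hat\rho_1^2, \hat\rho_\ell^2$ absorbs the residuals $|\hat s_1 - s_1|$ and $|\hat s_\ell - s_2|$ and by arranging $\hat y_1, \hat y_\ell$ to lie within $\hat r_1, \hat r_\ell$ of $y_1, y_2$ respectively; and (v) reduces to requiring $2\kappa^2 \geq c(\eta)^{-1}$, i.e., $\kappa \geq (2c(\eta))^{-1/2}$, so that the temporal intervals of consecutive cylinders overlap. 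I expect the main obstacle to be reconciling the lower-bound condition (ii) on $\hat s_{i+1}-\hat s_i$ with the overlap condition (v), since both scale like $\hat r_i^2$ with opposite roles; the parabolic scaling imposed by the hypothesis $(s_2-s_1)^{1/2}\geq\eta^{-1}d(y_1,y_2)$ is exactly what forces these two constraints to be simultaneously satisfiable with $\ell$ depending only on $\eta$ and $c$ but not on $\epsilon$.
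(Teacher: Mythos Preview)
Your overall strategy matches the paper's: start from the spatial NTA Harnack chain guaranteed by Definition~\ref{D:NTA}(iii), attach time coordinates, and if necessary append time-only cylinders at the end to cover the remaining gap up to $s_2$. The difference is in how you balance the space and time scales, and your choice introduces a genuine gap.

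The paper keeps the spatial balls $\{B_d(\hat y_i,\hat r_i)\}_{i=1}^{\hat\ell}$ from the NTA chain \emph{unchanged} and simply sets $\hat\rho_i=\beta\hat r_i$ with a large $\beta=\hat\ell\eta^2$, then $\hat s_i=s_1+\beta^{-1}\sum_{j\le i}\hat r_j^2$. The stretching of the time-height makes the temporal overlap in (v) trivial, while the factor $\beta^{-1}$ in the increments forces $\hat s_{\hat\ell}\le s_2$ via $\sum\hat r_j^2\le\hat\ell\,d(y_1,y_2)^2\le\hat\ell\eta^2(s_2-s_1)$. Since the spatial balls are untouched, (iii) is automatic.

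You instead shrink the radii to $\hat r_i=r_i/K$ and insert intermediate centers. The claim that ``any sub-ball of radius $\hat r_i$ centered inside $B_d(x_i,r_i)$ remains $M$-nontangential'' is false as stated: such a sub-ball sits at distance at least $M^{-1}r_i$ from $\partial\Omega$, but $M$-nontangentiality requires $d(B,\partial\Omega)<M\hat r_i=Mr_i/K$, which fails once $K>M^2$. The lower bound survives shrinking; the upper bound does not. You could patch this by allowing a larger constant $M'=M'(M,K)$ in place of $M$ in (iii), but then you are proving a different statement than the one in Definition~\ref{def1.5}. The paper's device of stretching $\hat\rho_i$ rather than shrinking $\hat r_i$ avoids the issue entirely and makes the refinement step with intermediate centers unnecessary.
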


\begin{proof}
Since $\Omega$ is a NTA domain and since $%
d(y_1,\partial\Omega)>\epsilon$, $d(y_2,\partial\Omega)>\epsilon$, 
\begin{equation*}
d(y_1,y_2)\leq d_p((y_1, s_1),(y_2, s_2))<c\epsilon,
\end{equation*}
it follows that we can use Definition \ref{D:NTA} to conclude the existence
of a Harnack chain of length $\hat \ell=\hat \ell(c)$, $\{ B_d(\hat y_i, \hat r_i)\}_{i=1}^{\hat \ell}$,
connecting $y_1$ and $y_2$. In the
following we let $\beta$ be a degree of freedom to be fixed below. Using $%
\beta$ we define $\hat \rho_i = \beta \hat r_i$, we let $\hat s_i = s_1 + 
\frac{1}{\beta} \sum_{j=1}^i \hat r_j^2$ for $i\in\{1,..,\hat \ell\}$, and we
consider the sequence of cylinders 
\begin{equation*}
\{C_{\hat r_i, \hat \rho_i}(\hat y_i,\hat s_i)\}_{i=1}^{\hat \ell}.
\end{equation*}
If we now choose $\beta>1$, and if we assume that $\beta$ is chosen as a
function of $\eta$, then $(i)$, $(ii)$, $(iii)$, $(v)$ and the first part of 
$(iv)$ in Definition \ref{def1.5} are satisfied. In particular, it only remains to
ensure that the second part of $(iv)$ in Definition \ref{def1.5} is satisfied. To do
this we first note that we can assume, without loss of generality, that $%
\hat r_i \leq d(y_1,y_2)$ for all $i\in\{1,...,\hat \ell\}$. Hence, $%
\sum_1^{\hat l} \hat r_i^2 \leq {\hat \ell} \cdot d(y_1,y_2)^2$. Furthermore,
since $d(y_1,y_2)^2\leq \eta^2 (s_2-s_1)$, we have 
\begin{eqnarray}  \label{Toky1}
\hat s_{\hat \ell} - s_1 = \frac{1}{\beta} \sum_1^{\hat \ell} \hat r_j^2 \leq 
\frac{\hat \ell}{\beta} d(y_1,y_2)^2 \leq \frac{\hat \ell}{\beta} \eta^2 (s_2-s_1).
\end{eqnarray}
We now let $\beta=\hat \ell \cdot \eta^2$ and we can conclude that $\hat
s_{\hat \ell} \leq s_2$. If $\hat s_{\hat \ell} = s_2$ we are done. Otherwise, we
only step up in time with cylinders $C_j = \{C_{\hat r_{\hat \ell}, \hat
r_{\hat \ell}}(y_2,\hat s_{\hat \ell}+j \hat r_{\hat \ell})\}$ until we reach $%
(y_2,s_2)$. The time that is left depends on $\eta$, and, in particular, we
have that $s_2-\hat s_{\hat \ell} \leq c^2 \epsilon^2$. Furthermore, since $%
\hat r_\ell \leq c \epsilon$, the number of steps we need to reach $(y_2,s_2)$
only depends on $c$. In particular, it is clear that the length of the
entire parabolic Harnack chain only depends on $c$ and $\eta$. 
\end{proof} 

\begin{lemma}\label{lemmaacchar} 
Let $u$ be a nonnegative solution to the equation $Hu=0$ in $\Omega_T$.
Furthermore, let $(y_1, s_1)$, $(y_2, s_2)\in\Omega_T$, suppose that $%
s_2>s_1 $, $( s_2 - s_1 )^{1/2}\geq\eta^{ - 1} d(y_1,y_2)$ for some $\eta>1$%
, that $d(y_1,\partial\Omega)>\epsilon$, $d(y_2,\partial\Omega)>\epsilon$, $%
(T-s_2)>\epsilon^2$, $s_1>\epsilon^2$ and that $d_p((y_1, s_1),(y_2,
s_2))<c\epsilon$ for some $\epsilon>0$. Then, there exists a constant $\hat
c=\hat c(H,\eta,c,r_0)$, $1\leq \hat c<\infty$, such that 
\begin{equation*}
u(y_1,s_1)\leq \hat cu(y_2,s_2).
\end{equation*}
\end{lemma}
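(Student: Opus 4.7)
My plan is to invoke Lemma \ref{lemmaacc} to obtain a parabolic Harnack chain connecting $(y_1,s_1)$ to $(y_2,s_2)$, and then to iterate the interior parabolic Harnack inequality of Theorem \ref{Harnack} along this chain.

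First I apply Lemma \ref{lemmaacc} to produce a parabolic Harnack chain $\{C_{\hat r_i,\hat\rho_i}(\hat y_i,\hat s_i)\}_{i=1}^{\ell}$ in $\Omega_T$ connecting $(y_1,s_1)$ to $(y_2,s_2)$, of length $\ell=\ell(\eta,c)$ independent of $\epsilon$. Next, I pick a sequence of reference points $p_0=(y_1,s_1),p_1,\ldots,p_N=(y_2,s_2)$ with $N=N(\eta,c)$, arranged at strictly increasing times and each located inside one of the cylinders of the chain. For each consecutive pair $(p_j,p_{j+1})$ I apply Theorem \ref{Harnack} in a backward cylinder $C_{R_j}^-(\xi_j,\tau_j)$ chosen so that the cylinder is contained in $\Omega_T$, which is ensured by the $M$-nontangentiality of $B_d(\hat y_i,\hat r_i)$ from property (iii) of Definition \ref{def1.5} together with $\hat r_i\leq r_0$, while the point $p_j$ lies in the annular region $\overline{C_{\gamma R_j,h_2 R_j}^-(\xi_j,\tau_j)}\setminus C_{\gamma R_j,h_1 R_j}^-(\xi_j,\tau_j)$ and $p_{j+1}$ is close enough to $(\xi_j,\tau_j)$ that $u(p_{j+1})$ and $u(\xi_j,\tau_j)$ differ by at most an absolute multiplicative constant.

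The radius comparability in (i), the time separation in (ii), and the overlap in (v) of Definition \ref{def1.5} make it possible to take $R_j$ as a fixed multiple of $\hat r_i$ and to fix $\gamma,h_1,h_2$ as functions of $\eta,c,M$ only. With these choices each step of Theorem \ref{Harnack} yields $u(p_j)\leq Cu(p_{j+1})$ for a constant $C=C(H,\eta,c,r_0)$ independent of $j$. Composing the $N$ inequalities produces $u(y_1,s_1)\leq C^{N}u(y_2,s_2)$, and setting $\hat c=C^{N}$ completes the proof with $\hat c=\hat c(H,\eta,c,r_0)$.

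The main obstacle is the geometric bookkeeping in the second step: one has to simultaneously arrange the annular containment condition of Theorem \ref{Harnack} and the inclusion of the whole backward cylinder in $\Omega_T$, with constants uniform across the chain. This is routine but requires care, and relies entirely on the quantitative content of Definition \ref{def1.5} and the NTA parameters. No new analytic ingredient beyond Theorem \ref{Harnack} and Lemma \ref{lemmaacc} is needed.
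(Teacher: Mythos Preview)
Your proposal is correct and follows essentially the same approach as the paper: invoke Lemma \ref{lemmaacc} to obtain the parabolic Harnack chain and then apply Theorem \ref{Harnack} in each cylinder, with the $r_0$-dependence entering through the size parameter $R$ in that theorem. The paper's proof is in fact even more terse than yours, recording only these two ingredients without spelling out the geometric bookkeeping you describe.
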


\begin{proof} To prove the lemma we simply use the parabolic
Harnack chain from Lemma \ref{lemmaacc} and apply Theorem \ref{Harnack}
in each cylinder. Note that the dependence of constant $\hat c$ on $r_0$
enters through the size parameter $R$ in the statement of Theorem \ref%
{Harnack}. 
\end{proof}

%%%%%%%%%%%%%%%%%%%%%%%%%%%%%%%%%%%%
%>>>>>>>>>>>>>>>>>>>>>>>>>>>>>>>>>>>>>>>>>>>>>>>

%%%%%%%%%%%%%%%%%%%%%%%%%%%%%%%%%%%%

\section{Basic estimates}

The purpose of this section is to establish a number of
basic technical estimates that will be used in the proof of Theorems \ref{T:back}-\ref{T:doubling}. We mention that, using the notion of NTA domain and Lemma \ref{lemmaacchar}, several of the proofs previously established in the literature in the classical case $%
m=n$ and $\{X_1,...,X_m\}=\{\partial_{x_1},...,\partial_{x_n}\}$ can be extended to our setting. As a consequence, wherever appropriate, we will either omit details or be
brief. 
As previously, unless otherwise stated, $c $ will
denote a positive constant $\geq 1$, not necessarily the same at each
occurrence, depending only on $H$ and $M$. In general, $c ( a_1, \dots, a_m
) $ denotes a positive constant $\geq 1, $ which may depend only on $H$, $M$
and $a_1, \dots, a_m, $ and which is not necessarily the same at each
occurrence. When we write  $A\approx B$ we mean that $A/B$ is bounded from above and below by
constants which, unless otherwise stated, only depend on $H,M$.

\begin{lemma}\label{lem4.7}
Let $(x_0,t_0)\in S_T$ and  
\[
r<\min\{r_0/2,\sqrt{(T-t_0)/4},\sqrt{t_0/4}\}.
\]
Let $u$ be a nonnegative
solution to $Hu=0$ in $\Omega_T\cap C_{2r}(x_0,t_0)$ which
vanishes continuously on $\Delta(x_0,t_0,2r)$. Then, there exist $c=c(H,M,r_0)$, $1\leq c<\infty$, and $\gamma=\gamma(H,M)>0$, such that for every $(x,t)\in \Omega_T\cap C_r(x_0,t_0)$,
\begin{equation*}
u(x,t)d_p(x,t,S_T)^\gamma\leq cr^\gamma u(A^+_r(x_0,t_0)).
\end{equation*}
\end{lemma}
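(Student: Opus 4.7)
\medskip
\noindent\textbf{Proof plan.} The plan is to deduce the estimate from the parabolic Harnack chain of Lemma~\ref{lemmaacchar} applied to connect an arbitrary $(x,t)\in\Omega_T\cap C_r(x_0,t_0)$ to the reference point $A_r^+(x_0,t_0)$, tracking carefully how the multiplicative constant grows as $(x,t)$ approaches the lateral boundary $S_T$. The vanishing of $u$ on $\Delta(x_0,t_0,2r)$ is needed only to guarantee continuity of $u$ up to the lateral boundary portion, so that $d_p(x,t,S_T)$ is a bona fide measure of the metric distance from $(x,t)$ to the set where $u$ is not a priori regular.

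Fix $(x,t)\in\Omega_T\cap C_r(x_0,t_0)$ and set $\delta:=d_p(x,t,S_T)$. I would first verify the hypotheses of Lemma~\ref{lemmaacchar} with $(y_1,s_1):=(x,t)$, $(y_2,s_2):=A_r^+(x_0,t_0)=(A_r(x_0),t_0+2r^2)$, $\eta:=2$, and $\epsilon:=\min\{\delta,r/M\}$: (i) $s_2-s_1\ge r^2$ since $s_1=t\le t_0+r^2$, so $(s_2-s_1)^{1/2}\ge r$; (ii) $d(y_1,y_2)\le d(x,x_0)+d(x_0,A_r(x_0))\le 2r$, hence $(s_2-s_1)^{1/2}\ge\eta^{-1}d(y_1,y_2)$; (iii) $d(A_r(x_0),\partial\Omega)\ge r/M$ by the interior corkscrew, while $d(x,\partial\Omega)\ge d_p(x,t,S_T)=\delta$ directly from the definition of $d_p$; (iv) the conditions $s_1,T-s_2>\epsilon^2$ follow from the admissibility constraints $t_0>4r^2$ and $T-t_0>4r^2$; (v) $d_p((y_1,s_1),(y_2,s_2))\le\sqrt{7}\,r$, so the constant $c$ appearing in the hypothesis $d_p<c\epsilon$ can be taken $c=O(\max\{r/\delta,M\})$. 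Lemma~\ref{lemmaacchar} then yields $u(x,t)\le\hat c\,u(A_r^+(x_0,t_0))$ with $\hat c=\hat c(H,2,c,r_0)$.

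The key quantitative task is to show $\hat c\le C_*(r/\delta)^\gamma$ for some $\gamma=\gamma(H,M)>0$ and $C_*=C_*(H,M,r_0)$. Inspecting the proof of Lemma~\ref{lemmaacchar}, $\hat c$ is produced by iterating Theorem~\ref{Harnack} along a parabolic Harnack chain whose length is controlled by that of the spatial NTA Harnack chain supplied by Definition~\ref{D:NTA}(iii). The standard NTA construction --- dyadically doubling the distance to $\partial\Omega$ from scale $\epsilon$ up to scale $d(y_1,y_2)\lesssim r$, then halving back down --- produces such a chain of length $\ell=O(\log(r/\delta))$. Iterating Theorem~\ref{Harnack} along this chain introduces a factor of the Harnack constant $\kappa$ at each step, whence $\hat c\le\kappa^\ell\le(r/\delta)^\gamma$ with $\gamma:=C\log\kappa=\gamma(H,M)>0$. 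Multiplying through by $\delta^\gamma$ yields exactly $u(x,t)\,\delta^\gamma\le c\,r^\gamma\,u(A_r^+(x_0,t_0))$; the dependence of $c$ on $r_0$ enters through the admissible base radius $R$ in the statement of Theorem~\ref{Harnack}.

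The main technical obstacle is making explicit the logarithmic dependence of the NTA Harnack chain length on the separation constant $C$ of Definition~\ref{D:NTA}(iii), which is not spelled out in the definition itself; this bound is classical for the Euclidean NTA domains of Jerison--Kenig and carries over to the subelliptic NTA domains of Capogna--Garofalo via the same dyadic doubling built on the interior corkscrew. An alternative route, perhaps more consonant with the parabolic-measure machinery employed elsewhere in the paper, is to prove instead a one-scale boundary oscillation decay $\sup_{\Omega_T\cap C_{\rho/2}(y_0,s_0)} u\le\theta\sup_{\Omega_T\cap C_\rho(y_0,s_0)} u$ at boundary points $(y_0,s_0)\in\Delta(x_0,t_0,2r)$, via a barrier built from the fundamental solution (Lemma~\ref{Gaussbound}) together with the exterior corkscrew; one then iterates at dyadic scales to obtain a H\"older estimate at the boundary, and finally bootstraps through Lemma~\ref{lemmaacchar} to replace the resulting supremum by a constant multiple of $u(A_r^+(x_0,t_0))$.
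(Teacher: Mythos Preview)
Your proposal is correct and is essentially the paper's argument: the paper carries out precisely the dyadic doubling you allude to, projecting $(x,t)$ to its nearest lateral-boundary point $Q_0$, setting $P_i := A^+_{2^{i}a}(Q_0)$ with $a=d_p((x,t),S_T)$, and applying Lemma~\ref{lemmaacchar} at each step $P_i\to P_{i+1}$ with \emph{uniformly bounded} parameters $(\eta,c)$, so that $O(\log(r/a))$ iterations give the power $(r/a)^\gamma$. The only organizational difference is that the paper applies Lemma~\ref{lemmaacchar} repeatedly with bounded separation constant rather than once with $c\sim r/\delta$, which directly sidesteps the unstated logarithmic bound on chain length in Definition~\ref{D:NTA}(iii) that you correctly flag as the main technical obstacle.
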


\begin{proof}
The proof of this lemma is based on Lemma \ref%
{lemmaacchar}. In particular, let $P_0 = (x,t) \in \Omega_T\cap C_r(x_0,t_0)$
and let $a=d_p(P_0,S_T)$. Note that, without loss of generality,
we can assume that $a<r/c_1$ for some large $c_1$ since otherwise we are done
immediately by a simple application of Lemma \ref{lemmaacchar}. Now, take $%
Q_0\in S_T$ such that $d_p(Q_0,P_0)=a$ and define $P_i=A^+_{2^i a}(Q_0)$ for
all $i\geq 1$ such that $A^+_{2^i a}(Q_0)$ is well-defined. We intend to use
Lemma \ref{lemmaacchar} to prove that $u(P_i)\leq c u(P_{i+1})$ for some
constant $c=c(H,M,r_0)$. In the following we write $P_i=(P_i^x,P_i^t)$, $%
Q_0=(Q_0^x,Q_0^t)$ to indicate the spatial and time coordinate of $P_i$ and $%
Q_0$ respectively. Then, for $i=0$ we have 
\begin{equation*}
d(P_0^x,P_1^x) \leq d(P_0^x,Q_0^x) + d(Q_0^x,P_1^x) \leq a + 2a = \frac{3}{2 
\sqrt{2}} (P_0^t-P_1^t)^{1/2}.
\end{equation*}
Since $3/2\sqrt{2}>1$, using Lemma \ref{lemmaacchar} we can conclude that $%
u(P_0)\leq c u(P_1)$. To continue, for $i\geq 1$ we first note that 
\begin{equation*}
P_{i+1}^t-P_i^t = 2(2^{i+1} a)^2 - 2(2^{i} a)^2 = 3 \cdot 2^{2i+1} a^2.
\end{equation*}
Furthermore, we also have 
\begin{equation*}
d(P_{i+1}^x,P_i^x) \leq d(P_{i+1}^x,Q_0^x) + d(Q_0^x,P_i^x) \leq 2^{i+1} a +
2^i a = \sqrt{\frac{3}{2}} (P_{i+1}^t-P_i^t)^{1/2}.
\end{equation*}
Let $\epsilon = 2^{i} a/M$. Then $d(P_i^x,\partial \Omega)>\epsilon$, $%
d(P_{i+1}^x,\partial \Omega)>\epsilon$ and $d_p(P_{i+1},P_i) = (3 \cdot
2^{2i} a^2 + 3 \cdot 2^{2i+1} a^2)^{1/2} = \sqrt{15}M\cdot \epsilon$. Since $%
\sqrt{\frac{3}{2}}$ and $\sqrt{15}M$ are both independent of $i$ and since $%
\sqrt{\frac{3}{2}}>1$, we can again conclude, using Lemma \ref{lemmaacchar},
that $u(P_i)\leq Cu(P_{i+1})$ for all $i>0$ such that $P_i$ and $P_{i+1}$
lie in $\Omega_T\cap C_{2r}(x_0,t_0)$. In particular, to complete the proof
it is now enough to consider the largest $k$ such that $2^{k} a \leq r$ and
then iterate the above inequalities in a standard fashion. We omit further
details.
\end{proof}

\begin{lemma}
\label{lem4.8} Let $(x_0,t_0)\in S_T$ and  
\[
r<\min\{r_0/2,\sqrt{(T-t_0)/4},\sqrt{t_0/4}\}.
\]
Let $u$ be a nonnegative
solution to $Hu=0$ in $\Omega_T\cap C_{2r}(x_0,t_0)$ vanishing continuously on $\Delta(x_0,t_0,2r)$. Then, there exist $c=c(H,M,r_0)$, $1\leq c<\infty$, and $\gamma=\gamma(H,M,r_0)>0$, such that 
\begin{equation*}
u(A^-_r(x_0,t_0))\leq c\left(\frac{r}{d_p(x,t,\partial_p\Omega_T)}\right)^\gamma u(x,t),
\end{equation*}
whenever $(x,t)\in \Omega_T\cap C_r(x_0,t_0)$.
\end{lemma}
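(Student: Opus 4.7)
The plan is to mirror the argument of Lemma \ref{lem4.7}, but to build a Harnack chain that propagates into the past rather than into the future. First, I set $P_0=(x,t)$ and $a=d_p(P_0,\partial_p\Omega_T)$. Because the constraint $r<\sqrt{t_0/4}$ forces $t>3r^2$, one has $d_p(P_0,\Omega\times\{0\})=\sqrt{t}>\sqrt{3}\,r$, while $d(x,\partial\Omega)\le d(x,x_0)<r$, so $a$ in fact coincides with $d(x,\partial\Omega)$ and there exists $Q_0=(Q_0^x,t)\in S_T$ with $d(x,Q_0^x)=a$. I may further assume $a\ll r$, since otherwise a single application of Lemma \ref{lemmaacchar} suffices. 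Following the template of Lemma \ref{lem4.7}, I introduce
\[
P_i \;=\; A_{2^i a}^{-}(Q_0)\;=\;\bigl(A_{2^i a}(Q_0^x),\;t-2(2^i a)^2\bigr),\qquad i\geq 1,
\]
which march into the past while moving away from $\partial\Omega$ at geometrically increasing scales.

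The iterative core is to verify that Lemma \ref{lemmaacchar} applies at every junction $(P_{i+1},P_i)$ with parameters $\eta,c$ independent of $i$. For $i=0$ one checks $P_0^t-P_1^t=8a^2$ and $d(P_0^x,P_1^x)\le a+2a=3a$, so the ratio $d/(\Delta t)^{1/2}=3/(2\sqrt{2})<2$ gives uniform $\eta$; for $i\ge 1$, both differences scale homogeneously, $d(P_i^x,P_{i+1}^x)\le 3\cdot 2^i a$ and $P_i^t-P_{i+1}^t=6\cdot 4^i a^2$, producing the same ratio $\sqrt{3/2}$ at every step. Since $P_i^x$ is $M$-non-tangential at scale $2^i a$, the choice $\epsilon_i=2^i a/M$ verifies the non-tangentiality, time-range and proximity hypotheses of Lemma \ref{lemmaacchar} uniformly in $i$. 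Hence $u(P_{i+1})\le \hat c\, u(P_i)$ with $\hat c$ independent of $i$. Choosing $k$ to be the largest integer with $2^k a\le r/2$ and iterating yields $u(P_k)\le \hat c^{\,k}u(P_0)$ with $k=\log_2(r/a)+O(1)$, so $\hat c^{\,k}\le C\,(r/a)^\gamma$ for $\gamma=\log_2\hat c$, and the dependence of $\hat c$ on $r_0$ in Lemma \ref{lemmaacchar} accounts for the appearance of $r_0$ in $\gamma$.

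It remains to connect $P_k$ to $A_r^-(x_0,t_0)$, and this is where the main subtlety lies: unlike the forward iteration in Lemma \ref{lem4.7}, the Harnack inequality only propagates bounds from earlier to later times, so the target $A_r^-(x_0,t_0)$ must lie strictly in the past of $P_k$. This is exactly why I stop at $2^k a\le r/2$ rather than $\le r$: the resulting time bound $P_k^t\ge t-r^2/2\ge t_0-3r^2/2>t_0-2r^2=A_r^-(x_0,t_0)^t$ enforces the correct ordering. Both points are then $M$-non-tangential at scale comparable to $r$, separated in time by at least $r^2/2$ and in space by $O(r)$, so one last application of Lemma \ref{lemmaacchar} gives $u(A_r^-(x_0,t_0))\le \hat c\, u(P_k)$. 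Combining, $u(A_r^-(x_0,t_0))\le \hat c^{\,k+1}u(x,t)\le C(r/d_p(x,t,\partial_p\Omega_T))^\gamma u(x,t)$, which is the desired estimate.
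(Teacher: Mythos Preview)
Your proof is correct and follows exactly the approach the paper indicates, namely to mirror the chain argument of Lemma~\ref{lem4.7} with time reversed and $A^-$ in place of $A^+$. You also correctly identify and handle the one new point that arises in the reversed direction, that the terminal link requires $A_r^-(x_0,t_0)$ to lie strictly in the past of $P_k$, which you ensure by stopping the dyadic iteration at $2^k a\le r/2$.
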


\begin{proof}
To prove this lemma one can proceed similarly to the
proof of Lemma \ref{lem4.7}. 
\end{proof} 

\begin{lemma}\label{lem4.5-Kyoto1} 
There exists a $\hat{K}\gg 1$, $\hat{K}=\hat{K}(H,M)$, such that the following is
true whenever $(x_{0},t_{0})\in \R^{n+1}$ and $r<r_{0}/(2\hat{K})$.
Assume that $D$ is a domain in $\Rn$ such that $D\subset
B_{d}(x_{0},\hat{K}r)$ and assume that there exist $\hat{x}_{0}\in
B_{d}(x_{0},\hat{K}r)$ and $\rho>0 $ such that $B_{d}(\hat{x}_{0},2\rho
)\subset B_{d}(x_{0},r)$, $B_{d}(\hat{x}_{0},2\rho )\cap D=\varnothing $ and $%
M^{-1}r<\rho <r$. Let $u$ be a function in $D\times (t_{0}-4r^{2},t_{0})$
which satisfies $Hu\geq 0$ in $D\times (t_{0}-4r^{2},t_{0})$, $u\leq 0$ on $%
\partial _{p}(D\times (t_{0}-4r^{2},t_{0}))\setminus \partial _{p}C_{\hat{K}%
r,2r}^{-}(x_{0},t_{0})$ and $\sup_{D\times (t_{0}-4r^{2},t_{0})}u>0$. Then,
there exists a constant $\theta =\theta (H,M,r_{0})$, $0<\theta <1$, such
that 
\begin{equation}
\sup_{(D\times (t_{0}-4r^{2},t_{0}))\cap C_{r}^{-}(x_{0},t_{0})}u\leq \theta
\sup_{D\times (t_{0}-4r^{2},t_{0})}u.  \label{jul2}
\end{equation}
\end{lemma}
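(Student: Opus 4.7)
The plan is to build a Perron-Wiener-Brelot-Bauer comparison supersolution on the annular region $\Omega^\ast := B_d(x_0,\hat K r)\setminus\overline{B_d(\hat x_0,\rho)}$ whose boundary values along $\partial_p(D\times(t_0-4r^2,t_0))$ dominate $u$, and then to quantify, via the Gaussian bounds of Lemma~\ref{Gaussbound}, how much less than its maximum this supersolution must be on $C_r^-(x_0,t_0)$. Set $M_0 := \sup_{D\times(t_0-4r^2,t_0)}u > 0$; the inclusion $D\subset\Omega^\ast$ holds because $D\subset B_d(x_0,\hat K r)$ and $B_d(\hat x_0,2\rho)\cap D = \varnothing$.

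Let $V^\ast$ be the Perron-Wiener-Brelot-Bauer solution of $HV^\ast = 0$ on $\Omega^\ast\times(t_0-4r^2,t_0)$ with boundary data $1$ on the outer lateral face $\partial B_d(x_0,\hat K r)\times(t_0-4r^2,t_0)$ and on the bottom $\Omega^\ast\times\{t_0-4r^2\}$, and $0$ on the inner lateral face $\partial B_d(\hat x_0,\rho)\times(t_0-4r^2,t_0)$. Solvability follows from Lemma~\ref{Dirichlet0} once one verifies that both pieces of $\partial\Omega^\ast$, being metric spheres, enjoy the outer positive $d$-density property, and the weak maximum principle gives $0\leq V^\ast\leq 1$. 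Apply the weak maximum principle to the subparabolic function $u - M_0V^\ast$ on $D\times(t_0-4r^2,t_0)$: on the part of $\partial_p(D\times(t_0-4r^2,t_0))$ where $u\leq 0$ one has $M_0V^\ast\geq 0\geq u$, while on the complementary part, contained in $\bigl(\partial B_d(x_0,\hat K r)\cap\partial D\bigr)\times(t_0-4r^2,t_0)\cup(D\times\{t_0-4r^2\})$, we have $V^\ast = 1$ and so $M_0V^\ast = M_0\geq u$. Hence $u\leq M_0V^\ast$ on $D\times(t_0-4r^2,t_0)$.

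It remains to show $V^\ast\leq\theta$ on $C_r^-(x_0,t_0)\cap\Omega^\ast$ for some $\theta < 1$. For this, I would use the Gaussian barrier
\[
\Phi(x,t) := \int_{B_d(\hat x_0,\rho)}\Gamma(x,t,\xi,t_0-4r^2)\,d\xi,
\]
which is $H$-parabolic on $\Rn\times(t_0-4r^2,\infty)$ with initial data $\chi_{B_d(\hat x_0,\rho)}$, so $0\leq\Phi\leq 1$. The lower bound in \eqref{G1}, combined with $d(x,\xi)\leq 3r$ and $\sqrt{t-t_0+4r^2}\leq 2r$ for $(x,t)\in C_r^-(x_0,t_0)$ and $\xi\in B_d(\hat x_0,\rho)$ (using $\hat x_0\in B_d(x_0,r)$, which follows from $B_d(\hat x_0,2\rho)\subset B_d(x_0,r)$), together with the doubling comparison $|B_d(\hat x_0,\rho)|\approx|B_d(x,2r)|$, gives $\Phi\geq c_0$ on $C_r^-(x_0,t_0)$ with $c_0 = c_0(H,M,r_0)>0$. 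The upper bound in \eqref{G1}, combined with $d(x,\xi)\geq(\hat K - 2)r$ for $x\in\partial B_d(x_0,\hat K r)$, yields $\Phi(x,t)\leq c_1\,e^{-c_2(\hat K-2)^2}$ on the outer lateral face, where $c_2$ is a universal constant of the operator. Fix $\hat K = \hat K(H,M)$ large enough that this bound is $\leq c_0/2$. Now apply the weak maximum principle to $\Phi - (1 - V^\ast)$ on $\Omega^\ast\times(t_0-4r^2,t_0)$: on the outer face $1-V^\ast = 0$ and $\Phi\leq c_0/2$; on the inner face $1-V^\ast = 1$ and $\Phi\leq 1$; and on the bottom both terms vanish, since $\Omega^\ast\cap B_d(\hat x_0,\rho) = \varnothing$. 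Hence $\Phi - (1-V^\ast)\leq c_0/2$ throughout, so $1 - V^\ast\geq c_0/2$ on $C_r^-(x_0,t_0)\cap\Omega^\ast$, i.e., $V^\ast\leq 1 - c_0/2 =:\theta < 1$, which combined with $u\leq M_0V^\ast$ gives \eqref{jul2}.

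The main technical obstacle is the careful bookkeeping of constants required to ensure that the exponential factor $e^{-c_2(\hat K-2)^2}$ dominates the $r_0$-dependent prefactors appearing in both $c_0$ and $c_1$, so that $\hat K$ can be chosen depending only on $H$ and $M$; this works because the Gaussian exponent constant is a universal operator-dependent constant, while all $r_0$-dependence enters only through polynomial/constant prefactors that are beaten by the exponential for $\hat K$ large. A secondary, more routine, difficulty is the verification that the annular region $\Omega^\ast$ satisfies the outer positive $d$-density condition at every boundary point, needed to apply Lemma~\ref{Dirichlet0} and to rigorously justify the maximum-principle comparisons above; this reduces to the analogous property of metric balls and their complements.
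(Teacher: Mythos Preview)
Your argument is correct and follows the same strategy as the paper: compare $u$ with barriers built from the fundamental solution via Lemma~\ref{Gaussbound}, and use the maximum principle. The packaging differs slightly. The paper never solves a Dirichlet problem on the annulus; instead it introduces two Cauchy solutions
\[
\Phi_1(\hat x,\hat t)=\int_{\Rn}\Gamma(\hat x,\hat t,\xi,t_0-4r^2)\phi_1(\xi)\,d\xi,\qquad
\Phi_2(\hat x,\hat t)=\int_{\Rn}\Gamma(\hat x,\hat t,\xi,t_0-4r^2)\phi_2(\xi)\,d\xi,
\]
with $\phi_1$ a cutoff supported in the annulus $B_d(x_0,\hat Kr+2r)\setminus B_d(x_0,\hat Kr-2r)$ and $\phi_2$ a cutoff equal to $1$ on $B_d(x_0,\hat Kr)\setminus B_d(\hat x_0,2\rho)$ and $0$ on $B_d(\hat x_0,\rho)$. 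One checks $1\le c\Phi_1$ on the outer lateral face, so $u\le cM_0\Phi_1+M_0\Phi_2$ on $D\times(t_0-4r^2,t_0)$; then one estimates $\Phi_1\le c\,e^{-c^{-1}\hat K^2}\hat K^\eta$ on $C_r^-(x_0,t_0)$ and $\Phi_2\le 1-\hat c^{-1}$ (the latter via $1-\Phi_2=\hat\Phi_2\ge c^{-1}$, which is exactly your lower bound on $\Phi$). Your $V^\ast$ is essentially $c\Phi_1+\Phi_2$ restricted to the annular cylinder, and your $\Phi$ is the paper's $\hat\Phi_2$. The paper's route has the minor advantage of sidestepping the solvability of the Dirichlet problem on $\Omega^\ast$ (and the discontinuity of the data for $V^\ast$ at the inner bottom edge), since $\Phi_1,\Phi_2$ are globally defined Cauchy solutions. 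Your Step~6 also needs a word about the regime $t\to t_0-4r^2$ on the outer face, where the volume factor $|B_d(x,\sqrt{t-t_0+4r^2})|^{-1}$ blows up; the paper avoids this by doing the analogous estimate only on $C_r^-(x_0,t_0)$, where the elapsed time is bounded below by $3r^2$. Both issues are routine, and the substance of the two proofs is the same.
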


\begin{proof} Let $\hat{K}\gg 1$ be a constant to be
fixed below. We let $\phi _{1}\in C_{0}^{\infty }(\Rn)$ be such
that $0\leq \phi _{1}\leq 1$, $\phi _{1}\equiv 1$ on $B_{d}(x_{0},\hat{K}%
r+r)\setminus B_{d}(x_{0},\hat{K}r-r)$, $\phi _{1}\equiv 0$ on $B_{d}(x_{0},%
\hat{K}r-2r)\cup (\Rn\setminus B_{d}(x_{0},\hat{K}r+2r))$.
Similarly, we let $\phi _{2}\in C_{0}^{\infty }(\Rn)$ be such
that $0\leq \phi _{2}\leq 1$, $\phi _{2}\equiv 1$ on $B_{d}(x_{0},\hat{K}%
r)\setminus B_{d}(\hat{x}_{0},2\rho )$, $\phi _{2}\equiv 0$ on $B_{d}(\hat{x}%
_{0},\rho )\cup (\Rn\setminus B_{d}(x_{0},\hat{K}r+2r))$. Using $%
\phi _{1}$ and $\phi _{2}$ we define 
\begin{eqnarray*}
\Phi _{1}(\hat{x},\hat{t}) &=&\int\limits_{\Rn}\Gamma (\hat{x},%
\hat{t},\xi ,t_{0}-4r^{2})\phi _{1}(\xi )d\xi ,  \notag  \label{compp5again}
\\
\Phi _{2}(\hat{x},\hat{t}) &=&\int\limits_{\Rn}\Gamma (\hat{x},%
\hat{t},\xi ,t_{0}-4r^{2})\phi _{2}(\xi )d\xi ,
\end{eqnarray*}%
whenever $(\hat{x},\hat{t})\in \R^{n+1}$, $\hat{t}\geq t_{0}-4r^{2}$%
. To preceed we first prove that there exist a constant $c$ such that%
\begin{equation}
1\leq c\Phi _{1}(\hat{x},\hat{t})\ \text{for}\ (\hat{x},\hat{t})\in
\partial _{p}(C_{\hat{K}r,2r}^{-}(x_{0},t_{0})\cap
\{(x,t):t_{0}-4r^{2}<t<t_{0}\})  \label{jul1}
\end{equation}%
To establish this, let $(\hat{x},\hat{t})$ be as in (\ref{jul1}), and for
simplicity assume that $t_{0}-4r^{2}=0$. Then, using Lemma \ref{Gaussbound} and (\ref{dc})
we see that%
\begin{eqnarray*}
\Phi _{1}(\hat{x},\hat{t}) &\geq &\int_{B_{d}(\hat{x},\sqrt{\hat{t}}%
/2)}\Gamma (\hat{x},\hat{t},\xi ,0)\phi _{1}(\xi )d\xi  \notag \\
&\geq &\int_{B_{d}(\hat{x},\sqrt{\hat{t}}/2)}c^{-1}\left\vert B(\hat{x},%
\sqrt{\hat{t}})\right\vert ^{-1}e^{-Cd(\hat{x},\xi )^{2}/\hat{t}}d\xi  \notag
\\
&=&e^{-C\hat{t}/4\hat{t}}\int_{B_{d}(\hat{x},\sqrt{\hat{t}}%
/2)}c^{-1}\left\vert B(\hat{x},\sqrt{\hat{t}})\right\vert ^{-1}e^{-C\left(
4d(\hat{x},\xi )^{2}-\hat{t}\right) /4\hat{t}}d\xi  \notag \\
&\geq &e^{-C/4}c^{-1}\left\vert B(\hat{x},\sqrt{\hat{t}})\right\vert
^{-1}\int_{B_{d}(\hat{x},\sqrt{\hat{t}}/2)}d\xi \geq e^{-C/4}c^{-1}\hat{C}^{-1}.  \label{compky5+}
\end{eqnarray*}%
We conclude that (\ref{jul1}) holds provided that we choose $c\le e^{-C/4}\hat{C}^{-1}$. Now, let%
\begin{equation}
M=\sup_{D\times (t_{0}-4r^{2},t_{0})}u.  \label{supM}
\end{equation}%
Using (\ref{jul1}) and the maximum principle on $D\times
(t_{0}-4r^{2},t_{0})$ we thus see that the estimate
\begin{equation}
u(\hat{x},\hat{t})\leq cM\Phi _{1}(\hat{x},\hat{t})+M\Phi _{2}(\hat{x},\hat{t%
})  \label{compky4}
\end{equation}%
holds in $D\times (t_{0}-4r^{2},t_{0})$, and thus in particular in $\left( D\times
(t_{0}-4r^{2},t_{0}\right) )\cap C_{r}^{-}(x_{0},t_{0})$. Further, if $(\hat{%
x},\hat{t})\in \left( D\times (t_{0}-4r^{2},t_{0}\right) )\cap
C_{r}^{-}(x_{0},t_{0})$, then%
\begin{eqnarray*}
\Phi _{1}(\hat{x},\hat{t}) &\leq &\int\limits_{B_{d}(x_{0},\hat{K}%
r+r)\setminus B_{d}(x_{0},\hat{K}r-r)}|B(\hat{x},\sqrt{\hat{t}}%
)|^{-1}e^{-C^{-1}d(\hat{x},\xi )^{2}/\hat{t}}d\xi  \notag \\
&\leq &\int\limits_{B_{d}(x_{0},\hat{K}r+r)\setminus B_{d}(x_{0},\hat{K}%
r-r)}|B(\hat{x},r)|^{-1}e^{-c^{-1}d(\hat{x},\xi )^{2}/r^{2}}d\xi  \notag \\
&\leq &ce^{-c^{-1}\hat{K}^{2}}|B_{d}(x_{0},\hat{K}r+r)\setminus B_{d}(x_{0},%
\hat{K}r-r)||B(\hat{x},r)|^{-1}  \notag \\
&\leq &ce^{-c^{-1}\hat{K}^{2}}|B_{d}(x_{0},\hat{K}r+r)||B(\hat{x},r)|^{-1}.
\end{eqnarray*}%
Iterating (\ref{dc}) and using that $r<r_{0}/(2\hat{K})$ we see that 
\begin{equation*}
ce^{-c^{-1}\hat{K}^{2}}|B_{d}(x_{0},\hat{K}r+r)||B(\hat{x},r)|^{-1}\leq
ce^{-c^{-1}\hat{K}^{2}}\hat{K}_{{}}^{\eta }
\end{equation*}%
for some integer $\eta >>1$ which is independent of $\hat{K}$, $x_{0}$, $%
\hat{x}$ and $r$. In particular%
\begin{equation*}
\Phi _{1}(\hat{x},\hat{t})\leq ce^{-c^{-1}\hat{K}^{2}}\hat{K}^{\eta }.
\end{equation*}%
To estimate $\Phi _{2}(\hat{x},\hat{t})$ we note that%
\begin{eqnarray*}
\Phi _{2}(\hat{x},\hat{t}) &=&1-\hat{\Phi}_{2}(\hat{x},\hat{t}),\text{ where}
\\
\hat{\Phi}_{2}(\hat{x},\hat{t}) &=&\int_{\Rn}\Gamma (\hat{x},\hat{%
t},\xi ,t_{0}-4r^{2})(1-\phi _{2}(\xi ))d\xi,
\end{eqnarray*}%
and by construction,%
\begin{equation*}
\hat{\Phi}_{2}(\hat{x},\hat{t})\geq \int_{B_{d}(\hat{x},\rho )}\Gamma (\hat{x%
},\hat{t},\xi ,t_{0}-4r^{2})d\xi .
\end{equation*}%
As before we then prove that 
\begin{equation*}
\hat{\Phi}_{2}(\hat{x},\hat{t})\geq c^{-1},
\end{equation*}%
and actually, for $\varepsilon $ small enough,%
\begin{equation*}
\hat{\Phi}_{2}(\hat{x}_{0},t_{0}-4r^{2}+\varepsilon ^{2}\rho ^{2})\geq
c^{-1}.
\end{equation*}%
Hence, by using the Harnack inequality we can conclude that $\Phi _{2}(\hat{x%
},\hat{t})=1-\hat{\Phi}_{2}(\hat{x},\hat{t})\leq \left( 1-c^{-1}\right)$,
whenever $(\hat{x},\hat{t})\in \left( D\times (t_{0}-4r^{2},t_{0}\right)
)\cap C_{r}^{-}(x_{0},t_{0})$ for some $c=c(H,M,r_{0})>1.$ In particular, for every $(\hat{x},\hat{t})\in \left(
D\times (t_{0}-4r^{2},t_{0}\right) )\cap C_{r}^{-}(x_{0},t_{0})$, we have
\begin{equation*}
u(\hat{x},\hat{t})\leq cM\Phi _{1}(\hat{x},\hat{t})+M\Phi _{2}(\hat{x},\hat{t%
})\leq M(ce^{-c^{-1}\hat{K}^{2}}\hat{K}^{\eta }+\left( 1-\hat{c}^{-1}\right)
),
\end{equation*}
for some $\hat{c}=\hat{c}(H,M,r_{0})$.  Given $%
\hat{c}$, we choose $\hat{K}$ so that $ce^{-c^{-1}\hat{K}^{2}}\hat{K}^{\eta
}\leq \hat{c}^{-1}/2$, and we let $\theta =\left( 1-\hat{c}^{-1}/2\right) <1$%
. Then, the following inequality holds
\begin{equation}
u(\hat{x},\hat{t})\leq \theta M,  \label{compky10b}
\end{equation}%
 with $M$ as in (\ref{supM}). This establishes (\ref{jul2}), thus
completing the proof. 
\end{proof}

We will also need a few variations on the theme of Lemma \ref{lem4.5-Kyoto1}.

\begin{cor}\label{lem4.5-} 
There exists a $\hat{K}\gg 1$, $\hat{K}=\hat{K}(H,M,r_{0})$, such that the following is true whenever $%
(x_{0},t_{0})\in S_{T}$ and 
\[
r<\min \{r_{0}/(2\hat{K}),\sqrt{(T-t_{0})/4},
\sqrt{t_{0}/4}\}.
\]
Let $u$ be a non-negative solution to $Hu=0$ in $\Omega
_{T}\cap C_{\hat{K}r,2r}^{-}(x_{0},t_{0})$ vanishing
continuously on $S_{T}\cap C_{\hat{K}r,2r}^{-}(x_{0},t_{0})$. Then, there
exists a constant $\theta =\theta (H,M)$, $0<\theta <1$, such that 
\begin{equation*}
\sup_{\Omega _{T}\cap C_{r}^{-}(x_{0},t_{0})}u\leq \theta \sup_{\Omega
_{T}\cap C_{\hat{K}r,2r}^{-}(x_{0},t_{0})}u.
\end{equation*}
\end{cor}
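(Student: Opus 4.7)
The plan is to deduce this corollary directly from Lemma \ref{lem4.5-Kyoto1}, using the NTA exterior corkscrew condition to produce the geometric configuration required there. Set $M_0 = \sup_{\Omega_T \cap C^-_{\hat K r, 2r}(x_0, t_0)} u$; if $M_0 = 0$ the conclusion is trivial, so I may assume $M_0 > 0$. Let $D = \Omega \cap B_d(x_0, \hat K r)$, so that $D \subset B_d(x_0, \hat K r)$ and, since $t_0 > 4r^2$, the cylinder $D \times (t_0 - 4r^2, t_0)$ coincides exactly with $\Omega_T \cap C^-_{\hat K r, 2r}(x_0, t_0)$.

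For the geometric hypothesis of Lemma \ref{lem4.5-Kyoto1}, I would use the fact that $\Omega$ is NTA and $x_0 \in \partial \Omega$ to apply the exterior corkscrew condition (Definition \ref{D:NTA}(ii)) at a scale comparable to $r$, producing a point $\hat x_0$ and a radius $\rho$ with $M^{-1} r < \rho < r$ and $B_d(\hat x_0, 2\rho) \subset \Omega^c$; then $B_d(\hat x_0, 2\rho) \cap D = \varnothing$ is automatic since $D \subset \Omega$. Choosing $\hat K$ sufficiently large (as a function of $H$, $M$, $r_0$) lets one arrange the remaining containment conditions $B_d(\hat x_0, 2\rho) \subset B_d(x_0, r)$ and $\hat x_0 \in B_d(x_0, \hat K r)$ required by the lemma. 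The remaining hypotheses are then immediate: $Hu = 0 \geq 0$ in $D \times (t_0 - 4r^2, t_0)$; the set $\partial_p(D \times (t_0-4r^2, t_0)) \setminus \partial_p C^-_{\hat K r, 2r}(x_0, t_0)$ reduces to the lateral piece $(\partial \Omega \cap B_d(x_0, \hat K r)) \times [t_0 - 4r^2, t_0]$, which is contained in $S_T \cap C^-_{\hat K r, 2r}(x_0, t_0)$ and on which $u$ vanishes continuously by hypothesis, so $u \equiv 0 \leq 0$ there; finally $\sup_{D \times (t_0-4r^2, t_0)} u = M_0 > 0$.

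With all hypotheses of Lemma \ref{lem4.5-Kyoto1} verified, I would conclude
\[
\sup_{\Omega_T \cap C^-_r(x_0, t_0)} u \;=\; \sup_{(D \times (t_0-4r^2, t_0)) \cap C^-_r(x_0, t_0)} u \;\leq\; \theta M_0,
\]
which is the asserted inequality. I expect the main obstacle to be the careful quantitative matching of scales in the previous paragraph: namely, choosing $\hat K$ large enough (depending on the NTA parameters) so that a single application of the exterior corkscrew condition simultaneously delivers $M^{-1} r < \rho < r$ and the inclusion $B_d(\hat x_0, 2\rho) \subset B_d(x_0, r)$. Once this geometric bookkeeping is settled, the rest of the argument is a direct appeal to Lemma \ref{lem4.5-Kyoto1}.
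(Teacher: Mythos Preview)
Your proposal is correct and matches the paper's own approach exactly: the paper's proof simply says the corollary is ``an obvious consequence of the NTA character of $\Omega$ and of Lemma \ref{lem4.5-Kyoto1},'' which is precisely your strategy of invoking the exterior corkscrew to produce $(\hat x_0,\rho)$ and then applying that lemma with $D=\Omega\cap B_d(x_0,\hat Kr)$. One small correction to your bookkeeping: $\hat K$ is already fixed by Lemma \ref{lem4.5-Kyoto1}, so the inclusion $B_d(\hat x_0,2\rho)\subset B_d(x_0,r)$ is not obtained by enlarging $\hat K$ but rather by applying the exterior corkscrew at a slightly smaller scale (e.g.\ $r/3$), which yields $\rho\approx r/M$ and the desired containment simultaneously, with the ``$M$'' in Lemma \ref{lem4.5-Kyoto1} replaced by a fixed multiple of the NTA parameter.
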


\begin{proof} This is an obvious consequence of the NTA character of $\Omega$ and of Lemma \ref{lem4.5-Kyoto1}. We omit further details.
\end{proof}

\begin{lemma}\label{lem4.5--} 
There exists a $\hat{K}\gg 1$, $\hat{K}%
=\hat{K}(H,M,r_{0})$, such that the following is true whenever $%
(x_{0},t_{0})\in S_{T}$ and \[
r<\min \left\{r_{0}/(2\hat{K}),\sqrt{(T-t_{0})/(4%
\hat{K})^{2}},\sqrt{t_{0}/(4\hat{K})^{2}}\right\}. 
\]
Let $u$ be a solution to $Hu=0$
in $\Omega _{T}\cap C_{\hat{K}r,2r}^{-}(x_{0},t_{0})$ which
vanishes continuously on $S_{T}\cap C_{\hat{K}r,2r}^{-}(x_{0},t_{0})$. Then,
there exists a constant $\theta =\theta (H,M)$, $0<\theta <1$, such that 
\begin{equation*}
\sup_{\Omega _{T}\cap C_{r}^{-}(x_{0},t_{0})}u^{\pm }\leq \theta
\sup_{\Omega _{T}\cap C_{\hat{K}r,2r}^{-}(x_{0},t_{0})}u^{\pm },
\end{equation*}%
where $u^{+}(x,t)=\max \{0,u(x,t)\}$, $u^{-}(x,t)=-\min \{0,u(x,t)\}$.
\end{lemma}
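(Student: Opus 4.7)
The plan is to run the argument of Lemma~\ref{lem4.5-Kyoto1} separately for the positive and negative parts $u^{+}=\max(u,0)$ and $u^{-}=-\min(u,0)$. The key observation is that in the proof of Lemma~\ref{lem4.5-Kyoto1} the function $u$ only enters via the pointwise comparison $u\le cM\Phi_{1}+M\Phi_{2}$, obtained from the maximum principle; no $\Gamma^{2}$-regularity of $u$ is used in an essential way.

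First I set $D=\Omega\cap B_{d}(x_{0},\hat{K}r)$, so that $D\times(t_{0}-4r^{2},t_{0})=\Omega_{T}\cap C_{\hat{K}r,2r}^{-}(x_{0},t_{0})$. Invoking the exterior corkscrew clause of Definition~\ref{D:NTA} at a scale comparable to $r$ produces a point $\hat{x}_{0}\in\Omega^{c}$ and a radius $\rho$ with $M^{-1}r<\rho<r$ and $B_{d}(\hat{x}_{0},2\rho)\subset B_{d}(x_{0},r)\setminus\overline{D}$; after a harmless enlargement of $\hat{K}$, still depending only on $H,M,r_{0}$, this verifies the geometric hypothesis of Lemma~\ref{lem4.5-Kyoto1} with $D$ as above.

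Next, since $u$ and the constant $0$ are $H$-parabolic in $\Omega_{T}\cap C_{\hat{K}r,2r}^{-}(x_{0},t_{0})$, the truncation $u^{+}$ belongs to $\underline{S}(\Omega_{T}\cap C_{\hat{K}r,2r}^{-}(x_{0},t_{0}))$, and inherits from $u$ the continuous vanishing on $S_{T}\cap C_{\hat{K}r,2r}^{-}(x_{0},t_{0})$. A direct check of the definitions gives
\[
\partial_{p}\bigl(D\times(t_{0}-4r^{2},t_{0})\bigr)\setminus\partial_{p}C_{\hat{K}r,2r}^{-}(x_{0},t_{0})\subset\overline{S_{T}\cap C_{\hat{K}r,2r}^{-}(x_{0},t_{0})},
\]
so that the boundary hypothesis of Lemma~\ref{lem4.5-Kyoto1} is met by $u^{+}$ in the appropriate $\limsup$ sense. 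I then run the barrier construction of the proof of Lemma~\ref{lem4.5-Kyoto1} verbatim with $u$ replaced by $u^{+}$: setting $\widetilde{M}=\sup u^{+}$ over the larger cylinder, the $H$-parabolic majorant $c\widetilde{M}\Phi_{1}+\widetilde{M}\Phi_{2}$ dominates $u^{+}$ on $\partial_{p}(D\times(t_{0}-4r^{2},t_{0}))$, and the quantitative choice of $\hat{K}$ and $\theta$ at the end of the proof of Lemma~\ref{lem4.5-Kyoto1} yields
\[
\sup_{\Omega_{T}\cap C_{r}^{-}(x_{0},t_{0})}u^{+}\le\theta\,\widetilde{M}.
\]
Applying the same argument to $-u$ produces the analogous estimate for $u^{-}$.

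The principal obstacle, and essentially the only new point compared with Corollary~\ref{lem4.5-}, is justifying that the maximum-principle step of Lemma~\ref{lem4.5-Kyoto1} still applies when $u$ is replaced by the merely continuous subparabolic envelope $u^{+}$. This is precisely what the Perron--Wiener--Brelot--Bauer framework recalled in Section~\ref{S:prelim} supplies: any element of $\underline{S}(D')$ which is dominated on $\partial_{p}D'$ by an element of $P(D')$ is dominated by it throughout $D'$, which is exactly the form of comparison that the proof of Lemma~\ref{lem4.5-Kyoto1} requires.
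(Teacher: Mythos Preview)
Your argument is correct, but it takes a small detour that the paper avoids. You replace $u$ by the subparabolic envelope $u^{+}$ and then have to justify the comparison principle for a merely continuous subparabolic function against the $H$-parabolic barrier $cM^{+}\Phi_{1}+M^{+}\Phi_{2}$; you correctly note that the PWBB framework supplies this. The paper instead keeps $u$ itself in the comparison: since $u$ is a classical solution of $Hu=0$, and since $u\le u^{+}\le M^{+}$ everywhere while $u=0$ on the $S_{T}$ portion of the boundary, the ordinary maximum principle for $\Gamma^{2}$ solutions gives $u\le cM^{+}\Phi_{1}+M^{+}\Phi_{2}$ directly, and the barrier estimates of Lemma~\ref{lem4.5-Kyoto1} then yield $u(\hat x,\hat t)\le\theta M^{+}$ on the small cylinder, whence $u^{+}\le\theta M^{+}$ trivially. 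The estimate for $u^{-}$ follows by applying the same reasoning to $-u$. In short, what you flagged as ``the principal obstacle'' disappears if one compares the barrier with $u$ rather than with $u^{+}$; your approach works but the paper's is shorter.
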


\begin{proof}
We first prove Lemma \ref{lem4.5--} for $u^{+}$.
In fact, in this case the argument is essentially the same as that in the proof of
Lemma \ref{lem4.5-Kyoto1}. In particular, if we let 
\begin{equation*}
M^{+}=\sup_{\Omega _{T}\cap C_{\hat{K}r,2r}^{-}(x_{0},t_{0})}u^{+},
\end{equation*}%
then we see that (\ref{compky4}) still holds but with $M$ replaced by $M^{+}$%
. Furthermore, repeating the argument in \eqref{compky5+} - \eqref{compky10b},
we see that 
\begin{equation*}
u(\hat{x},\hat{t})\leq \theta M^{+},
\end{equation*}%
whenever $(\hat{x},\hat{t})\in \Omega _{T}\cap C_{r}^{-}(x_{0},t_{0})$.
Obviously this completes the proof of Lemma \ref{lem4.5--} for $u^{+}$.
Concerning the same estimate for $u^{-}$ we see, by analogy, that 
\begin{equation}
-u(\hat{x},\hat{t})\leq \theta M^{-},\ M^{-}=\sup_{\Omega _{T}\cap C_{\hat{K}%
r,2r}^{-}(x_{0},t_{0})}(-u)=\sup_{\Omega _{T}\cap C_{\hat{K}%
r,2r}^{-}(x_{0},t_{0})}u^{-},  \label{compky10jajb}
\end{equation}%
whenever $(\hat{x},\hat{t})\in \Omega _{T}\cap C_{r}^{-}(x_{0},t_{0})$ and
from (\ref{compky10jajb}) we deduce Lemma \ref{lem4.5--} for $u^{-}$. This
completes the proof of the lemma.
\end{proof}

\begin{lemma}\label{lem4.5} 
Let $(x_{0},t_{0})\in S_{T}$ and let $%
r<\min \{r_{0}/2,\sqrt{(T-t_{0})/4},\sqrt{t_{0}/4}\}$. Let $u$ be a
non-negative solution to $Hu=0$ in $\Omega _{T}\cap C_{2r}(x_{0},t_{0})$ which vanishes continuously on $\Delta (x_{0},t_{0},2r)$. Then,
there exist a constant $c=c(H,M,r_{0})$, $1\leq c<\infty $, and $\alpha
=\alpha (H,M)\in (0,1)$, such that 
\begin{equation}
u(x,t)\leq c\biggl (\frac{d_{p}(x,t,x_{0},t_{0})}{r}\biggr )^{\alpha
}\sup_{\Omega _{T}\cap C_{2r}(x_{0},t_{0})}u
\end{equation}%
whenever $(x,t)\in \Omega _{T}\cap C_{r/c}(x_{0},t_{0})$.
\end{lemma}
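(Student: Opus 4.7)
My plan is to derive Lemma \ref{lem4.5} by iterating the one-step sup-decay of Corollary \ref{lem4.5-}. With $\hat K$ and $\theta\in(0,1)$ from that corollary, set
\[
\alpha := -\frac{\log\theta}{\log\hat K}\in(0,1),\qquad M := \sup_{\Omega_T\cap C_{2r}(x_0,t_0)} u,
\]
and take the constant $c$ in the statement to be sufficiently large (e.g.\ $c\geq 4\hat K$). The target bound $u(x,t)\leq c(\rho/r)^\alpha M$ with $\rho := d_p(x,t,x_0,t_0)$ will follow from the standard dyadic mechanism $\theta^k\approx (\rho/r)^\alpha$, with $k$ chosen so that $r/\hat K^k$ is comparable to $\rho$.

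The first step is to prove, by finite induction on $k\geq 1$, that for any reference time $\tilde t$ with $(x_0,\tilde t)\in\Delta(x_0,t_0,2r)$ and $C_r^-(x_0,\tilde t)\subset C_{2r}(x_0,t_0)$,
\[
\sup_{\Omega_T\cap C_{r/\hat K^k}^-(x_0,\tilde t)} u \leq \theta^k M.
\]
At each step, Corollary \ref{lem4.5-} applied at radius $s_k := r/\hat K^k$ yields
\[
\sup_{C_{s_k}^-(x_0,\tilde t)\cap\Omega_T} u \leq \theta \sup_{C_{s_{k-1},\,2s_k}^-(x_0,\tilde t)\cap\Omega_T} u \leq \theta \sup_{C_{s_{k-1}}^-(x_0,\tilde t)\cap\Omega_T} u,
\]
using the inclusion $C_{s_{k-1},2s_k}^-\subset C_{s_{k-1}}^-$ (valid since $\hat K\geq 2$). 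At the base step $k=1$ the required vanishing of $u$ on the lateral part of $C_{r,2r/\hat K}^-(x_0,\tilde t)$ is supplied by the hypothesis that $u$ vanishes on $\Delta(x_0,t_0,2r)$, together with a quick check that $C_{r,2r/\hat K}^-(x_0,\tilde t) \cap S_T \subset \Delta(x_0,t_0,2r)$.

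The main obstacle is that Corollary \ref{lem4.5-} only decays oscillation in \emph{backward} cylinders, whereas Lemma \ref{lem4.5} is stated on the full (two-sided) cylinder $C_{r/c}(x_0,t_0)$. My device is to shift the reference time: given $(x,t)\in\Omega_T\cap C_{r/c}(x_0,t_0)$, pick $\tilde t := t_0$ if $t\leq t_0$, and $\tilde t := t+\rho^2$ if $t>t_0$. In both cases a direct computation shows $\tilde t\geq t$, $d_p(x,t,x_0,\tilde t)\leq\sqrt{2}\,\rho$, and $|\tilde t - t_0|\leq 2\rho^2 < 4r^2$, so $(x_0,\tilde t)\in\Delta(x_0,t_0,2r)$; the smallness constraints $r^2<(T-t_0)/4$ and $r^2<t_0/4$ ensure $\tilde t\in(0,T)$ and $C_r^-(x_0,\tilde t)\subset C_{2r}(x_0,t_0)$, so the intermediate claim applies at the center $(x_0,\tilde t)$.

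Finally, take $k$ to be the largest integer with $\sqrt{2}\,\rho < r/\hat K^k$; the choice $c\geq \sqrt{2}\hat K$ forces $k\geq 1$, and the bound $d_p(x,t,x_0,\tilde t)\leq\sqrt{2}\,\rho<r/\hat K^k$ places $(x,t)$ inside $C_{r/\hat K^k}^-(x_0,\tilde t)$. The intermediate claim then gives $u(x,t)\leq\theta^k M$, and since $k\geq \log(r/\rho)/\log\hat K - O(1)$, one obtains $\theta^k \leq c(\rho/r)^\alpha$, completing the proof. All remaining work reduces to routine verification of geometric containments.
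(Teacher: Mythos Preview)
Your proposal is correct and follows exactly the route the paper intends: the paper's proof consists of the single sentence ``This lemma is a simple consequence of Corollary~\ref{lem4.5-},'' and you have supplied the standard dyadic iteration that makes this precise. Your handling of the backward-versus-two-sided cylinder discrepancy via the time shift $\tilde t$ is the natural device here and is carried out carefully; the remaining containment checks are indeed routine.
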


\begin{proof} This lemma is a simple consequence of Corollary \ref{lem4.5-}. 
\end{proof} 

\begin{lemma}\label{lem4.6} 
Let $(x_0,t_0)\in S_T$ and let $%
r<\min\{r_0/2,\sqrt{(T-t_0)/4},\sqrt{t_0/4}\}$. Let $u$ be a nonnegative
solution to $Hu=0$ in $\Omega_T\cap C_{2r}(x_0,t_0)$ vanishing continuously on $\Delta(x_0,t_0,2r)$. Then, there exists a constant $%
c=c(H,M,r_0)$, $1\leq c<\infty$, such that 
\begin{equation*}
u(x,t)\leq cu(A_r^+(x_0,t_0))
\end{equation*}
whenever $(x,t)\in\Omega_T\cap C_{r/c}(x_0,t_0)$.
\end{lemma}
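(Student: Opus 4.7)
The plan is to establish this Carleson-type bound by combining the boundary H\"older decay of Lemma \ref{lem4.5} with the weighted growth estimate of Lemma \ref{lem4.7}, via an iteration modeled on the Fabes--Safonov--Yuan argument. Write $M := u(A_r^+(x_0,t_0))$. First I would note that any $(x,t)\in\Omega_T\cap C_r(x_0,t_0)$ with $d_p(x,t,S_T)\geq r/K$ already satisfies $u(x,t)\leq cK^\gamma M$ by a direct application of Lemma \ref{lem4.7}. Hence only points much closer to $S_T$ than to $(x_0,t_0)$ need further work.

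I would then fix $K\geq c_1$ large enough that $c_1 K^{-\alpha}\leq 1/2$, where $c_1,\alpha$ are the constants from Lemma \ref{lem4.5}, and choose the constant $c_0$ appearing in the statement (that is, the $c$ with $(x,t)\in C_{r/c}$) sufficiently large that every cylinder used below is admissible. Given $(x,t)\in\Omega_T\cap C_{r/c_0}(x_0,t_0)$ with $\rho_0:=d_p(x,t,S_T)<r/K$, pick $(y_0,s_0)\in S_T$ realizing this distance and apply Lemma \ref{lem4.5} at $(y_0,s_0)$ at scale $K\rho_0$. After checking that $K\rho_0<r$ and that $(y_0,s_0)$ is close enough to $(x_0,t_0)$ so that $\Delta(y_0,s_0,2K\rho_0)\subset\Delta(x_0,t_0,2r)$, the lemma yields
\[
u(x,t)\leq c_1 K^{-\alpha}\sup_{\Omega_T\cap C_{2K\rho_0}(y_0,s_0)}u\leq \tfrac{1}{2}\sup_{\Omega_T\cap C_{2K\rho_0}(y_0,s_0)}u.
\]
Consequently there is $(x_1,t_1)\in\Omega_T\cap C_{2K\rho_0}(y_0,s_0)$ with $u(x_1,t_1)\geq 2u(x,t)$, and the same dichotomy is repeated at $(x_1,t_1)$.

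The iteration produces either a stopping index $k$ at which $d_p(x_k,t_k,S_T)\geq r/K$, yielding via Lemma \ref{lem4.7} the bound $u(x,t)\leq 2^{-k}u(x_k,t_k)\leq cK^\gamma M$, or else an infinite sequence $(x_k,t_k)$ with $u(x_k,t_k)\geq 2^k u(x,t)$; the latter is ruled out by the local boundedness of $u$ on $\overline{\Omega_T\cap C_{2r}(x_0,t_0)}$, where $u$ extends continuously thanks to its vanishing on $\Delta(x_0,t_0,2r)$. Moreover, in the nontrivial case $u(x,t)\geq M$ Lemma \ref{lem4.7} applied to each intermediate iterate gives the geometric decay $\rho_k\leq cr(M/u(x_k,t_k))^{1/\gamma}\leq cr\cdot 2^{-k/\gamma}$, so that the total displacement $\sum_k(2K+1)\rho_k$ is summable and bounded by a fixed multiple of $r$.

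The main technical obstacle is the bookkeeping needed to guarantee that every iterate $(x_k,t_k)$ remains inside $\Omega_T\cap C_r(x_0,t_0)$, where Lemma \ref{lem4.7} is available at the original scale $r$ with reference point $A_r^+(x_0,t_0)$. One must choose $c_0$ (and, if necessary, insert intermediate Harnack chains from Lemma \ref{lemmaacchar} linking the corkscrews $A_{r_k}^+(y_k,s_k)$ to $A_r^+(x_0,t_0)$ without losing control of constants) so that the accumulated displacement plus the initial distance $\sqrt{2}\,r/c_0$ stays below $r$ throughout the iteration. Once these constants are tuned the desired inequality $u(x,t)\leq cM$ with $c=c(H,M,r_0)$ follows.
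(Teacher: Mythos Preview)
Your proposal is correct and matches the approach the paper has in mind: the paper's proof is merely the one-line reference ``This lemma is a consequence of Lemma~\ref{lem4.5}, the Harnack inequality and a classical argument developed in [CFMS] and [Sa],'' and what you have written is precisely that classical Caffarelli--Fabes--Mortola--Salsa/Salsa iteration, with Lemma~\ref{lem4.7} (itself a Harnack-chain consequence) playing the role of the Harnack step. The bookkeeping you flag in the final paragraph---keeping all iterates inside $C_r(x_0,t_0)$ via the summable displacement $\sum_k(2K+1)\rho_k\le Cr$ and the choice of $c_0$---is exactly the standard care needed, and the contradiction from $u(x_k,t_k)\to\infty$ while $(x_k,t_k)$ converges to a point of $\Delta(x_0,t_0,2r)$ where $u$ vanishes is the right closing step.
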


\begin{proof} This lemma is a consequence of Lemma \ref{lem4.5},
the Harnack inequality and a classical argument developed in [CFMS] and
[Sa]. 
\end{proof} 

\begin{rmrk}
\label{rem4.6} Note that if $u$ is a nonnegative solution to $Hu=0$ in all
of $\Omega_T$ then Lemma \ref{lem4.6} can be improved in the following way.
Let $(x_0,t_0)\in S_T$ and $r$ be as in the statement
of Lemma \ref{lem4.6}. Let $u$ be a nonnegative solution to $Hu=0$ in $%
\Omega_T$ vanishing continuously on $\Delta(x_0,t_0,2r)$.
Then, there exists a constant $c=c(H,M,r_0)$, $1\leq c<\infty$, such that 
\begin{equation*}
u(x,t)\leq cu(A_r^+(x_0,t_0))
\end{equation*}
whenever $(x,t)\in\Omega_T\cap C_{r}(x_0,t_0)$. In fact, the restriction $%
(x,t)\in\Omega_T\cap C_{r/c}(x_0,t_0)$ in Lemma \ref{lem4.6} is simply a
result of the fact that we in Lemma \ref{lem4.6} are only assuming that $u$
is a nonnegative solution in $\Omega_T\cap C_{2r}(x_0,t_0)$.
\end{rmrk}

\begin{lemma}
\label{lem4.9} Let $u$ be a nonnegative solution to $%
Hu=0$ in $\Omega_T$ which vanishes continuously on $S_T$. Let $%
0<\delta\ll\sqrt{T}$ be given. Then, there exists a constant $%
c=c(H,M,\mbox{diam}(\Omega),T,\delta,r_0)$, $1\leq c<\infty$, such that 
\begin{equation*}
\sup_{(x,t)\in\Omega^\delta\times (\delta^2,T)}u(x,t)\leq c
\inf_{(x,t)\in\Omega^\delta\times(\delta^2,T)}u(x,t).
\end{equation*}
\end{lemma}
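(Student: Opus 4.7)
My plan is to reduce the statement to the inequality $u(P_1)\le c\,u(P_2)$ for arbitrary $P_1=(y_1,s_1)$, $P_2=(y_2,s_2)$ in $\Omega^\delta\times(\delta^2,T)$, with $c=c(H,M,\mathrm{diam}(\Omega),T,\delta,r_0)$. I will combine three tools: (i) the weak maximum principle, which, since $u\ge 0$ vanishes continuously on $S_T$, implies that $t\mapsto\sup_{\Omega}u(\cdot,t)$ is nonincreasing on $(0,T)$; (ii) the near-boundary bound of Remark \ref{rem4.6} (alternatively Lemma \ref{lem4.6}), to convert boundary-close values into values at interior corkscrews; and (iii) the parabolic Harnack chain of Lemma \ref{lemmaacchar}, supplemented by iterated uses of the interior Harnack inequality Theorem \ref{Harnack} to step backward in time when the target is too close to $\{t=T\}$.

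First, by (i) applied in $\Omega\times(\delta^2/2,s_1)$, I get $u(y_1,s_1)\le\sup_{\Omega}u(\cdot,\delta^2/2)=u(y_0,\delta^2/2)$ for some $y_0\in\overline{\Omega}$; continuous vanishing on $\partial\Omega$ forces either $y_0\in\Omega$ or $u\equiv 0$ (in which case the lemma is trivial). Next, I convert $u(y_0,\delta^2/2)$ into a value at an interior point: if $d(y_0,\partial\Omega)$ is small enough compared to $\delta$, I pick $x_0\in\partial\Omega$ realising this distance and apply Lemma \ref{lem4.6} with $t_0=\delta^2/2$ and $r=\delta/3$ to obtain $u(y_0,\delta^2/2)\le c_1\,u(w_0,\sigma_0)$, where $w_0:=A_{\delta/3}(x_0)\in\Omega^{\delta/(3M)}$ and $\sigma_0:=\delta^2/2+2(\delta/3)^2=13\delta^2/18$; otherwise $d(y_0,\partial\Omega)\gtrsim\delta$ already, and I set $(w_0,\sigma_0):=(y_0,\delta^2/2)$ directly. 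In either case $(w_0,\sigma_0)$ has parabolic distance $\gtrsim\delta$ from $S_T$ and $\sigma_0\in[\delta^2/2,13\delta^2/18]$, so the task reduces to showing $u(w_0,\sigma_0)\le c_2\,u(P_2)$.

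The hardest part of the argument is that the condition $(T-s_2)>\epsilon^2$ in Lemma \ref{lemmaacchar} would force the chain length to diverge as $s_2\uparrow T$. I plan to sidestep this as follows: whenever $s_2\in(T-\delta^2,T)$, I will apply Theorem \ref{Harnack} inside $C_{\delta/2}^-(y_2,\tau)\subset\Omega_T$ (which makes sense for every $\tau\in(\delta^2/4,T]$, because $B_d(y_2,\delta/2)\subset\Omega$), with e.g.\ $\gamma=1/2,\ h_1=1/4,\ h_2=1/2$, to get $u(y_2,\tau-\delta^2/32)\le C_0\,u(y_2,\tau)$; iterating this bound at most $32$ times (an absolute number depending only on the chosen $\gamma,h_1,h_2$) produces $u(y_2,\tilde s_2)\le C_0^{32}\,u(y_2,s_2)$ with $\tilde s_2:=s_2-\delta^2\in(T-2\delta^2,T-\delta^2)$. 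When $s_2\le T-\delta^2$ already, I set $\tilde s_2:=s_2$ and skip this reduction.

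Finally, I will apply Lemma \ref{lemmaacchar} to the pair $(w_0,\sigma_0)\to(y_2,\tilde s_2)$. Its hypotheses hold with uniform parameters: $\tilde s_2-\sigma_0\gtrsim\delta^2$, both spatial coordinates lie $\gtrsim\delta/M$ away from $\partial\Omega$, $T-\tilde s_2\ge\delta^2$, $\sigma_0>\delta^2/4$, and $d_p((w_0,\sigma_0),(y_2,\tilde s_2))\le\sqrt{\mathrm{diam}(\Omega)^2+T}$, so the choices $\epsilon\sim\delta/M$, $\eta\sim\mathrm{diam}(\Omega)/\delta$, and chain constant $\sim\sqrt{\mathrm{diam}(\Omega)^2+T}/\delta$ all depend only on $H,M,\mathrm{diam}(\Omega),T,\delta,r_0$. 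Hence the Harnack chain has length bounded by some $L=L(H,M,\mathrm{diam}(\Omega),T,\delta,r_0)$, yielding $u(w_0,\sigma_0)\le c_2\,u(y_2,\tilde s_2)\le c_2 C_0^{32}\,u(P_2)$. Chaining the three estimates concludes the argument with $c=c_1 c_2 C_0^{32}$.
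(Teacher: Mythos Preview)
Your proposal is correct and, as far as one can tell, follows the approach the paper has in mind: the paper gives no details beyond ``proceed, using the lemmas given above, exactly as in the proof of Lemma~2.7 in \cite{N}'', and your argument uses precisely those tools (maximum principle, Remark~\ref{rem4.6}/Lemma~\ref{lem4.6}, Lemma~\ref{lemmaacchar}, Theorem~\ref{Harnack}). One minor point to tidy: when you invoke Lemma~\ref{lem4.6} with $r=\delta/3$ you implicitly need $\delta/3<r_0/2$; if $\delta$ is large relative to $r_0$ you should take $r=\min(\delta,r_0)/3$ instead and adjust $\epsilon$ accordingly, which is harmless since $c$ is allowed to depend on both $\delta$ and $r_0$.
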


\begin{proof} To prove this we can proceed, using the lemmas
given above, exactly as in the proof of Lemma 2.7 in \cite{N}. 
\end{proof}

\begin{lemma}\label{lem4.95} 
Let $K\gg1$ be given, let 
$(x_0,t_0)\in S_T$ and assume that $r<\min\{r_0/(8K),\sqrt{(T-t_0)/64},\sqrt{%
t_0/64}\}$. Let $u$ be a nonnegative solution to the equation $Hu=0$ in $%
\Omega_T$ vanishing continuously on $S_T$. Let $%
\gamma=\gamma(H,M)\in (0,1)$ be as in Lemma \ref{lem4.7} and Lemma \ref%
{lem4.8}. Assume that 
\begin{eqnarray*}
\sup_{\Omega_T\cap C_{2Kr,2r}^-(x_0,t_0)} u\geq (2K)^{-\gamma}
\sup_{\Omega_T\cap C_{4Kr,8r}^-(x_0,t_0)} u.
\end{eqnarray*}
Then, provided $K=K(H,M)$ is chosen large enough, there exists $c=c(H,M,r_0)\geq 1$, such that 
\begin{eqnarray*}
\sup_{\Omega_T\cap C_{4Kr}^-(x_0,t_0)\cap\{(x,t): t=t_0-64r^2\}} u\geq
c^{-1}\sup_{\Omega_T\cap C_{2Kr,2r}^-(x_0,t_0)} u.
\end{eqnarray*}
\end{lemma}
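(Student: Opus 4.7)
The plan is to adapt the barrier argument from Lemma \ref{lem4.5-Kyoto1} to the rectangular cylinder $C_{4Kr,8r}^-(x_0,t_0)$, reducing the sup of $u$ on the smaller set to the sup on the bottom time slice plus a term supported on the outer lateral piece, which can then be absorbed using the hypothesis for $K$ large enough.

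Set
\[
M := \sup_{\Omega_T\cap C_{2Kr,2r}^-(x_0,t_0)} u, \qquad M^* := \sup_{\Omega_T\cap C_{4Kr,8r}^-(x_0,t_0)} u,
\]
\[
N := \sup_{\Omega_T\cap C_{4Kr}^-(x_0,t_0)\cap\{t=t_0-64r^2\}} u,
\]
so that the hypothesis reads $M^*\leq (2K)^{\gamma} M$. Consider $D := (\Omega\cap B_d(x_0,4Kr))\times(t_0-64r^2,t_0)$, whose parabolic boundary decomposes into three pieces: the bottom slice, on which $u\leq N$; the lateral portion on $S_T$, on which $u\equiv 0$ by the vanishing hypothesis; and the outer lateral piece $(\Omega\cap\partial B_d(x_0,4Kr))\times(t_0-64r^2,t_0)$, on which $u\leq M^*$.

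Following exactly the construction in the proof of Lemma \ref{lem4.5-Kyoto1}, choose a cut-off $\phi\in C_0^\infty(\Rn)$ with $0\leq\phi\leq 1$, $\phi\equiv 1$ on the annulus $B_d(x_0,4Kr+r)\setminus B_d(x_0,4Kr-r)$, and supported in the slightly larger annulus $B_d(x_0,4Kr+2r)\setminus B_d(x_0,4Kr-2r)$, and define
\[
\Phi(\hat x,\hat t):=\int_{\Rn}\Gamma(\hat x,\hat t,\xi,t_0-64r^2)\,\phi(\xi)\,d\xi,\qquad \hat t>t_0-64r^2.
\]
The lower Gaussian bound in Lemma \ref{Gaussbound}, applied as in the derivation of \eqref{jul1}, provides a constant $c_0=c_0(H,M,r_0)\geq 1$ with $\Phi\geq c_0^{-1}$ on the outer lateral piece. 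On the other hand, for $(\hat x,\hat t)\in\Omega_T\cap C_{2Kr,2r}^-(x_0,t_0)$ one has $d(\hat x,\xi)\geq 2Kr-2r$ for every $\xi\in\operatorname{supp}\phi$, while $\hat t-(t_0-64r^2)\in(60r^2,64r^2)$; the upper Gaussian bound combined with iterations of the doubling property \eqref{dc} on balls of radius at most $4Kr+2r<r_0$ yields
\[
\Phi(\hat x,\hat t)\leq c K^\eta e^{-c'K^2}
\]
for constants $c$, $c'$, $\eta$ depending only on $H$, $M$ and $r_0$ but independent of $K$. Since $u\leq N+c_0 M^*\Phi$ on $\partial_p D$, the weak maximum principle on $D$ gives the same bound throughout $D$, and taking the sup over $\Omega_T\cap C_{2Kr,2r}^-(x_0,t_0)$ yields, via the hypothesis,
\[
M \leq N + c K^\eta e^{-c'K^2} M^* \leq N + c(2K)^{\gamma} K^\eta e^{-c'K^2} M.
\]
Choose $K=K(H,M,r_0)$ so large that $c(2K)^{\gamma}K^\eta e^{-c'K^2}\leq 1/2$; absorbing this term into the left-hand side gives $M\leq 2N$, which is the desired conclusion (with $c=2$).

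The main obstacle is verifying that the constants $c$, $c'$, $\eta$ arising from the Gaussian bounds and from the iterated doubling property can be chosen independent of $K$, so that the super-polynomial Gaussian decay $e^{-c'K^2}$ can genuinely dominate the polynomial blow-ups $(2K)^{\gamma}K^\eta$ for $K$ depending only on $H$ and $M$. Since the relevant balls all have radius bounded by $4Kr+2r<r_0$, the doubling exponent $\eta$ depends only on the constant $C_1$ in \eqref{dc} and not on $K$, and $c'$ is determined solely by the constant $C$ in Lemma \ref{Gaussbound}; this makes the absorption step legitimate, and the argument is then a straightforward adaptation of Lemma \ref{lem4.5-Kyoto1}.
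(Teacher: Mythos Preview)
Your proof is correct and follows essentially the same approach as the paper: a Gaussian barrier supported on the outer annulus handles the lateral contribution, the maximum principle transfers the estimate inside, and the super-exponential decay $e^{-c'K^2}$ beats the polynomial factors for $K$ large. The only difference is that the paper introduces a second barrier $\Phi_2$ (built from the exterior corkscrew, as in Lemma~\ref{lem4.5-Kyoto1}) to dominate $u$ on the bottom slice, whereas you simply use the constant $N$; since in the end the paper only uses the trivial bound $\Phi_2\le 1$, your streamlining is legitimate and in fact yields the slightly sharper conclusion $N\ge \tfrac12 M$ rather than $N\ge \tfrac12(2K)^{-\gamma}M$.
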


\begin{proof}
The proof of this lemma is similar to that of
Lemma \ref{lem4.5-}. In particular, we let $\phi_1\in C_0^\infty(\Rn)$ be such that $0\leq\phi_1\leq 1$, $\phi_1\equiv 1$ on $%
B_d(x_0,4Kr+2r)\setminus B_d(x_0,4Kr-2r)$, $\phi_1\equiv 0$ on $%
B_d(x_0,4Kr-4r)\cup(\Rn\setminus B_d(x_0,4Kr+4r))$. Since $\Omega$
is NTA we see that there exist $\hat x_0$ and $\rho>0$ such that $%
r/M<4\rho<r$ and such that $B(\hat x_0,2\rho)\subset(\Rn\setminus
\Omega)\cap B(x_0,r)$. Based on this we let $\phi_2\in C_0^\infty(\Rn)$ be such that $0\leq\phi_2\leq 1$, $\phi_2\equiv 1$ on $%
B_d(x_0,4Kr)\setminus B(\hat x_0,2\rho)$, $\phi_2\equiv 0$ on $(\Rn\setminus B_d(x_0,4 Kr+4r))\cup B(\hat x_0,\rho)$. Using $\phi_1$ and $%
\phi_2$ we define 
\begin{eqnarray*}  \label{compp5b}
\Phi_1(\hat x,\hat t)&=&\int\limits_{\Rn}\Gamma(\hat x,\hat
t,\xi,t_0-64r^2)\phi_1(\xi)d\xi,  \notag \\
\Phi_2(\hat x,\hat t)&=&\int\limits_{\Rn}\Gamma(\hat x,\hat
t,\xi,t_0-64r^2)\phi_2(\xi)d\xi,
\end{eqnarray*}
whenever $(\hat x,\hat t)\in\R^{n+1}$, $\hat t\geq t_0-64r^2$. Let $%
\Gamma_1=\Omega_T\cap C^-_{4Kr}(x_0,t_0)\cap\{(x,t): t=t_0-64r^2\}$, $%
\Gamma_2= \partial_p(\Omega_T\cap
C^-_{4Kr,8r}(x_0,t_0))\setminus\Gamma_1\setminus S_T$. In the following we
let 
\begin{eqnarray*}  \label{compp2}
M= \sup_{\Omega_T\cap C^-_{4Kr,8r}(x_0,t_0)} u,\ \hat M=\sup_{\Omega_T\cap
C^-_{4Kr}(x_0,t_0)\cap\{(x,t): t=t_0-64r^2\}} u.
\end{eqnarray*}
Then, by arguing as in the proof of Lemma \ref{lem4.5-}, we first see that
there exists $c$ such that 
\begin{equation*}  \label{compp5ky+0}
1\leq c\Phi_1(\hat x,\hat t)\ \text{for}\ (\hat x,\hat t)\in
\partial_p(C^{-}_{4K r,8r}(x_0,t_0))\cap \{(x,t):t_0-64r^2<t<t_0\},
\end{equation*}
and then, by the maximum principle we see, that 
\begin{eqnarray*}  \label{compp1ny1}
u(\hat x,\hat t)\leq cM\Phi_1(\hat x,\hat t)+\hat M\Phi_2(\hat x,\hat t)
\end{eqnarray*}
for $(\hat x,\hat t)\in \Omega_T\cap C^-_{2Kr,2r}(x_0,t_0)$. As in the
proof of Lemma \ref{lem4.5-} we can then deduce that 
\begin{eqnarray*}  \label{compp1ny2}
u(\hat x,\hat t)\leq cMe^{-c^{-1}{\ K}^2}{K}^\eta+\hat M\Phi_2(\hat x,\hat t),
\end{eqnarray*}
for $(\hat x,\hat t)\in \Omega_T\cap C^-_{2Kr,2r}(x_0,t_0)$. Next, using the
assumption stated in the lemma we see that 
\begin{eqnarray*}  \label{compp1ny3}
(2K)^{-\gamma}M\leq cMe^{-c^{-1}{\ K}^2}{K}^\eta+\hat M\sup_{\Omega_T\cap
C_{2Kr,2r}^-(x_0,t_0)}\Phi_2(\hat x,\hat t).
\end{eqnarray*}
Hence, assuming that $K$ is so large that $(2K)^{-\gamma}>ce^{-c^{-1}{\ K%
}^2}{K}^\eta$, we have that 
\begin{equation*}  \label{compp1ny4}
((2K)^{-\gamma}-e^{-c^{-1}{\ K}^2}{K}^\eta)M\leq\hat M\sup_{\Omega_T\cap
C_{2Kr,2r}^-(x_0,t_0)}\Phi_2(\hat x,\hat t)\leq\hat M.
\end{equation*}
In particular, we can conclude, for $K=K(H,M)$ large enough, that 
\begin{eqnarray*}  \label{compp1ny5}
\frac 1 2 (2K)^{-\gamma}\sup_{\Omega_T\cap C_{2Kr,2r}^-(x_0,t_0)} u\leq\frac
1 2 (2K)^{-\gamma}M\leq\hat M.
\end{eqnarray*}
This completes the proof.
\end{proof}

\begin{lemma}
\label{lem4.10} Let $\hat K$ be as in the statement of
Lemma \ref{lem4.5-Kyoto1}, let $K\gg\hat K$ be a constant to be suitably chosen, $(x_0,t_0)\in S_T$ and assume 
\[
r<\min\{r_0/(2K\hat K),\sqrt{(T-t_0)/(4K^2)},
\sqrt{t_0/(4K^2)}\}.
\]
 Let $u$ be a solution to $Hu=0$ in $(\Omega_T\setminus
\Omega_T^r)\cap C_{Kr}^-(x_0,t_0)$ which is continuous on the closure of $%
(\Omega_T\setminus \Omega_T^r)\cap C_{Kr}^-(x_0,t_0)$. Moreover, assume that 
\begin{eqnarray*}
(i)&&u(x,t)\leq 1\mbox{ whenever }(x,t)\in (\Omega_T\setminus
\Omega_T^r)\cap C_{Kr}^-(x_0,t_0), \\
(ii)&&u(x,t)\leq 0\mbox{ whenever }(x,t)\in
[(\partial\Omega\cup\partial\Omega^r) \times(t_0-(Kr)^2,t_0)]\cap
C_{Kr}^-(x_0,t_0).
\end{eqnarray*}
Then, there exists a constant $c=c(H,M,r_0)$, $1\leq c<\infty$, such that 
\begin{equation*}
u(x,t)\leq e^{-cK}
\end{equation*}
whenever $(x,t)\in (\Omega_T\setminus\Omega_T^r)\cap C_{\hat K r}^-(x_0,t_0)$%
.
\end{lemma}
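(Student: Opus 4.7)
The plan is to iterate the one-step decay estimate of Lemma~\ref{lem4.5-Kyoto1} on the order of $K$ times at the strip-width scale $r$. Fix a constant $C_1=C_1(H,M)$ (to be chosen below, with $C_1>\hat K+1$) and set $r_i=Kr-iC_1 r$ for $i=0,1,\dots,N$ with $N=\lfloor(K-\hat K)/C_1\rfloor$, so that $r_0=Kr$ and $r_N\ge\hat K r$. Let
\[
M_i=\sup_{(\Omega_T\setminus\Omega_T^r)\cap C_{r_i}^-(x_0,t_0)}\max\{u,0\};
\]
hypothesis~(i) gives $M_0\le 1$. The inductive claim to establish is $M_{i+1}\le\theta M_i$, where $\theta=\theta(H,M,r_0)\in(0,1)$ is the constant supplied by Lemma~\ref{lem4.5-Kyoto1}. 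Iterating $N$ times then yields $M_N\le\theta^N\le e^{-cK}$ for $c=c(H,M,r_0)>0$, and since $C_{\hat K r}^-(x_0,t_0)\subset C_{r_N}^-(x_0,t_0)$ this proves the lemma.

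For the inductive step fix $(y,s)\in(\Omega_T\setminus\Omega_T^r)\cap C_{r_{i+1}}^-(x_0,t_0)$, pick $y^*\in\partial\Omega$ with $d(y,y^*)<r$, and set $s^*=\min\{s+r^2/2,\,t_0\}$ so that $(y,s)\in C_r^-(y^*,s^*)$. Then apply Lemma~\ref{lem4.5-Kyoto1} at the base point $(y^*,s^*)$ at scale $r$, with $D=(\Omega\setminus\Omega^r)\cap B_d(y^*,\hat K r)$. The disjoint ball required by that lemma is furnished by the NTA exterior corkscrew at $y^*$ at scale $r/2$ (Definition~\ref{D:NTA}(ii)): it yields a point $\hat x_0\in B_d(y^*,r/2)$ and a radius of order $r/M$ so that the corresponding ball sits in $\Rn\setminus\Omega$, hence inside $B_d(y^*,r)\setminus D$, after choosing the abstract constant in Lemma~\ref{lem4.5-Kyoto1} as a suitable multiple of the NTA parameter. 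Hypothesis~(ii) of the present lemma supplies $u\le 0$ on $(\partial\Omega\cup\partial\Omega^r)\cap B_d(y^*,\hat K r)$, which is precisely the portion of $\partial_p(D\times(s^*-4r^2,s^*))$ disjoint from $\partial_p C_{\hat K r,2r}^-(y^*,s^*)$. The choice of $C_1$ (together with $d(y^*,x_0)<r_{i+1}+r$, $s^*\le t_0$, and $r_i^2\ge r_{i+1}^2+4r^2$) ensures that $C_{\hat K r,2r}^-(y^*,s^*)\subset C_{r_i}^-(x_0,t_0)$, so $\sup_{D\times(s^*-4r^2,s^*)}u\le M_i$. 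Lemma~\ref{lem4.5-Kyoto1} then delivers $u(y,s)\le\theta M_i$; taking sup over $(y,s)$ yields $M_{i+1}\le\theta M_i$.

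The main technical obstacle is the uniform verification of the geometric hypotheses of Lemma~\ref{lem4.5-Kyoto1} at each iteration step: producing an exterior ``hole'' of controlled size inside $B_d(y^*,r)$ (settled by the NTA exterior corkscrew) and nesting the outer cylinder $C_{\hat K r,2r}^-(y^*,s^*)$ inside the previous-step cylinder $C_{r_i}^-(x_0,t_0)$ (which fixes the step size $C_1$). The exponential rate $e^{-cK}$ emerges because $N$ is linear in $K$ and $\theta<1$, so that $\theta^N=\exp(-|\log\theta|\,N)$ decays exponentially in $K$.
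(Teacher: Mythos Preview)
Your argument is correct and follows essentially the same strategy as the paper's proof: iterate Lemma~\ref{lem4.5-Kyoto1} at scale $r$ roughly $K/\hat K$ times, using the NTA exterior corkscrew to supply the disjoint ball and the strip condition $d(\cdot,\partial\Omega)<r$ to center each application near $\partial\Omega$. The only cosmetic difference is that the paper indexes the nested cylinders from the inside out (radii $(2j+1)\hat K r$) and tracks the location of the supremum via the maximum principle, whereas you shrink from the outside in (radii $Kr-iC_1 r$) and bound $u$ at an arbitrary point; both lead to the same geometric decay $\theta^{N}$ with $N$ linear in $K$.
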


\begin{proof}In the following we consider odd integers $2j+1$
where $j\in [0,(K/\hat K-1)/2]$. For each such $j$ we define a point $(\hat
X_j,\hat t_j)\in (\Omega_T\setminus \Omega_T^r)\cap C_{(2j+1)\hat
Kr}^-(x_0,t_0)$ through the relation 
\begin{eqnarray}  \label{sapp1}
\sup_{(\Omega_T\setminus \Omega_T^r)\cap C_{(2j+1)\hat Kr}^-(x_0,t_0)}
u=u(\hat X_j,\hat t_j).
\end{eqnarray}
We then note, using the maximum principle, that $(\hat X_j,\hat t_j)\in
\partial_p[(\Omega_T\setminus \Omega_T^r)\cap C_{(2j+1) \hat Kr}^-(x_0,t_0)]$%
. By construction we also see that there exists $(\tilde X_j,\hat t_j)\in
\partial\Omega\times[t_0-((2j+1)\hat K r)^2,t_0)$ such that $d_p(\tilde
X_j,\hat t_j, \hat X_j,\hat t_j)= d(\tilde X_j, \hat X_j)\leq r$. In
particular, $(\hat X_j,\hat t_j)$ is in the closure of $C^-_{r}(\tilde
X_j,\hat t_j)$. We next note that 
\begin{equation}  \label{sapp1-}
C^-_{\hat Kr,2r}(\tilde X_j,\hat t_j)\cap [(\Omega_T\setminus
\Omega_T^r)\cap C_{Kr}^-(x_0,t_0)]\subset (\Omega_T\setminus \Omega_T^r)\cap
C_{(2j+3)\hat Kr}^-(x_0,t_0).
\end{equation}
Let $D$ be defined through the relation $D\times(\hat t_j-4r^2,\hat
t_j)=C^-_{\hat Kr,2r}(\tilde X_j,\hat t_j)\cap [(\Omega_T\setminus
\Omega_T^r)\cap C_{Kr}^-(x_0,t_0)]$. Then, applying Lemma \ref{lem4.5-Kyoto1}
we see that there exists $\theta=\theta(H,M)$, $0<\theta<1$, such that 
\begin{eqnarray}  \label{sapp1-+}
\sup_{(D\times(\hat t_j-4r^2,\hat t_j))\cap C_{r}^-(\tilde X_j,\hat
t_j)}u\leq\theta \sup_{D\times(\hat t_j-4r^2,\hat t_j)}u.
\end{eqnarray}
In particular, since $(\hat X_j,\hat t_j)$ is in the closure of the set $%
(D\times(\hat t_j-4r^2,\hat t_j))\cap C_{r}^-(\tilde X_j,\hat t_j)$ we can
use continuity of $u$, \eqref{sapp1-+} and \eqref{sapp1-} to conclude that 
\begin{align}\label{sapp2}
u(\hat X_j,\hat t_j)& \leq\theta\sup_{D\times(\hat t_j-4r^2,\hat
t_j)}u
\\
& \leq\theta\sup_{(\Omega_T\setminus \Omega_T^r)\cap C_{(2j+3)\hat
Kr}^-(x_0,t_0)} u=\theta u(\hat X_{j+1},\hat t_{j+1}).
\notag\end{align}
Let $j_0$ be the largest positive integer such that $(2j_0+3)\hat K\leq K$.
Then, by iteration we see that 
\begin{eqnarray}  \label{sapp3}
\sup_{(\Omega_T\setminus \Omega_T^r)\cap C_{\hat Kr}^-(x_0,t_0)} u=u(\hat
X_1,\hat t_1)\leq \theta^{j_0}u(\hat X_{j_0+1},\hat t_{j_0+1})\leq
\theta^{j_0}
\end{eqnarray}
where we, at the last step, has used that $u(x,t)\leq 1$. Hence 
\begin{eqnarray}  \label{sapp3+}
\sup_{(\Omega_T\setminus \Omega_T^r)\cap C_{\hat K r}^-(x_0,t_0)}
u\leq\theta^{j_0}.
\end{eqnarray}
Obviously \eqref{sapp3+} implies the statement in Lemma \ref{lem4.10} and
the proof is complete. 
\end{proof}

\begin{lemma}\label{lem4.11} 
Let $\hat K$ be as in the statement of
Lemma \ref{lem4.5-Kyoto1}, let $K\gg\hat K$ be given, $(x_0,t_0)\in S_T$ and assume that \[
r<\min\{r_0/(2K),\sqrt{(T-t_0)/(4K^2)},
\sqrt{t_0/(4K^2)}\}.
\]
Let $u$ and $v$ be two solutions to $Hu=0$ in $(\Omega_T\setminus \Omega_T^r)\cap C_{Kr}^-(x_0,t_0)$. Moreover, assume that 
\begin{eqnarray*}
(i)&&u(x,t)\geq 0,\ v(x,t)\leq 1 \mbox{ whenever }(x,t)\in
(\Omega_T\setminus \Omega_T^r)\cap C_{Kr}^-(x_0,t_0), \\
(ii)&&u(x,t)\geq 1\mbox{ whenever }(x,t)\in [\partial\Omega^r
\times(t_0-(Kr)^2,t_0)]\cap C_{Kr}^-(x_0,t_0), \\
(iii)&&v(x,t)\leq 0\mbox{ whenever }(x,t)\in
[(\partial\Omega\cup\partial\Omega^r) \times(t_0-(Kr)^2,t_0)]\cap
C_{Kr}^-(x_0,t_0).
\end{eqnarray*}
Then, for any $(x,t)\in \Omega_T\cap C_r^-(x_0,t_0)$ one has 
\[
v(x,t)\leq u(x,t),
\]
provided $K=K(H,M)$ is chosen large enough.
\end{lemma}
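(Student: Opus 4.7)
Set $w := v - u$, so that $Hw = 0$ in $(\Omega_T \setminus \Omega_T^r) \cap C_{Kr}^-(x_0, t_0)$. Hypotheses (i)--(iii) give the boundary bounds $w \leq 1 - 0 = 1$ in the region, $w \leq 0 - 0 = 0$ on $[\partial\Omega\times(t_0-(Kr)^2,t_0)]\cap C_{Kr}^-$, and, crucially, $w \leq 0 - 1 = -1$ on $[\partial\Omega^r\times(t_0-(Kr)^2,t_0)]\cap C_{Kr}^-$. Hence $w$ satisfies the hypotheses of Lemma \ref{lem4.10}, yielding the preliminary bound $w \leq e^{-cK}$ on $(\Omega_T \setminus \Omega_T^r)\cap C_{\hat K r}^-(x_0,t_0)$. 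This falls short of the desired $w \leq 0$ on the smaller cylinder $C_r^-$, so I refine the representation argument to exploit the strict gap $w \leq -1$ on $\partial \Omega^r$.

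Following the construction in the proof of Lemma \ref{lem4.5-Kyoto1}, I introduce two cutoffs $\phi_1, \phi_3 \in C_0^\infty(\Rn)$: $\phi_1$ equal to $1$ on an annular neighborhood of $\partial B_d(x_0, Kr)$ (exactly as in Lemma \ref{lem4.5-Kyoto1}), and $\phi_3$ supported in a ball $B_d(\tilde x_0, 2\tilde\rho) \subset \Omega^r \cap B_d(x_0, Kr)$ with $M^{-1} r \leq \tilde\rho \leq r$. Such a $\tilde x_0$ exists: taking it to be the corkscrew point $A_{2Mr}(x_0)$ from Definition \ref{D:NTA} places $\tilde x_0$ in $\Omega^r$ (since $d(\tilde x_0, \partial\Omega) \geq 2r$) and within $B_d(x_0, 2Mr) \subset B_d(x_0, Kr)$ whenever $K \geq 2M+1$. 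Let $\Phi_i(\hat x, \hat t) := \int_{\Rn}\Gamma(\hat x, \hat t, \xi, t_0 - (Kr)^2)\,\phi_i(\xi)\,d\xi$ for $i = 1, 3$. By the weak maximum principle on $D \times (t_0-(Kr)^2, t_0)$ with $D := (\Omega\setminus\Omega^r)\cap B_d(x_0, Kr)$, together with the boundary bounds on $w$ above, one deduces
\[
 w(\hat x,\hat t) \;\leq\; c\,\Phi_1(\hat x,\hat t) \;-\; \Phi_3(\hat x,\hat t),\qquad (\hat x,\hat t) \in D \times (t_0-(Kr)^2, t_0).
\]

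Finally, the upper Gaussian estimate of Lemma \ref{Gaussbound} combined with the doubling estimate \eqref{dc}, exactly as in the treatment of $\Phi_1$ in the proof of Lemma \ref{lem4.5-Kyoto1}, gives $\Phi_1(\hat x,\hat t) \leq c\,\hat K^{\eta}\, e^{-c^{-1} K^2}$ for $(\hat x,\hat t) \in C_r^-(x_0, t_0)$; while the lower Gaussian estimate of Lemma \ref{Gaussbound}, propagated via Lemma \ref{lemmaacchar} along a parabolic Harnack chain of length depending only on $H$ and $M$ from $(\hat x,\hat t)$ to a point inside the support of $\phi_3$, yields $\Phi_3(\hat x,\hat t) \geq c^{-1} K^{-\eta}$ uniformly for $(\hat x,\hat t) \in C_r^-(x_0, t_0)$. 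Choosing $K = K(H, M)$ so large that $c\,\hat K^{\eta}\, e^{-c^{-1} K^2} \leq c^{-1} K^{-\eta}/2$, we conclude $w \leq 0$, that is $v \leq u$, on $\Omega_T \cap C_r^-(x_0, t_0)$. The main obstacle is the uniform lower bound on $\Phi_3$, particularly for $(\hat x,\hat t)$ close to $\partial \Omega$ where the parabolic measure of $\partial \Omega$ dominates; this is handled by Harnack-chain propagation of the immediate Gaussian lower bound at $\tilde x_0$, which succeeds because the chain length is controlled solely by the NTA constants of $\Omega$ via Lemma \ref{lemmaacc}.
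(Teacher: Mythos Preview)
Your barrier comparison fails on the parabolic boundary, so the maximum principle step is not justified. Concretely, on the bottom face $D\times\{t_0-(Kr)^2\}$ (with $D=(\Omega\setminus\Omega^r)\cap B_d(x_0,Kr)$) your barrier reduces to $c\phi_1-\phi_3=c\phi_1$, since $\mathrm{supp}\,\phi_3\subset\Omega^r$ is disjoint from $D$; but $\phi_1$ vanishes on the large part of $D$ that lies inside $B_d(x_0,Kr-2r)$, whereas the hypotheses only give $w\le 1$ there. So $w\le c\Phi_1-\Phi_3$ already fails at $t=t_0-(Kr)^2$. A second failure occurs on $\partial\Omega\times(t_0-(Kr)^2,t_0)$: there you only know $w\le 0$, so you would need $c\Phi_1-\Phi_3\ge 0$. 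But for $\hat x\in\partial\Omega$ near $x_0$ and $\hat t$ shortly after the initial time (say $\hat t-\tau\approx r^2$), the Gaussian bounds give $\Phi_3(\hat x,\hat t)\gtrsim c^{-1}$ (the support of $\phi_3$ is at distance $\lesssim Mr$) while $\Phi_1(\hat x,\hat t)\lesssim K^\eta e^{-c^{-1}K^2}$ (the support of $\phi_1$ is at distance $\approx Kr$), so $c\Phi_1-\Phi_3<0$ for large $K$. Adding a $\Phi_2$-type term to cover the bottom does not rescue the argument either: the resulting upper bound in $C_r^-(x_0,t_0)$ would then contain a term of size $1-c^{-1}K^{-\eta}$, which swamps the $-c^{-1}K^{-\eta}$ contribution from $-\Phi_3$.

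The paper's proof is genuinely different and explains why a one-shot barrier cannot work here: near $\partial\Omega$ both $u$ and $v$ may be small of the same order, so one must quantify their relative sizes and \emph{iterate}. The argument first proves the lower bound $u(x,t)\ge 2\epsilon\,(d_p(x,t,S_T)/r)^\eta$ on $\Omega_T\cap C_r^-(x_0,t_0)$ (via an auxiliary Dirichlet solution, Lemma~\ref{lem4.5-Kyoto1}, and Lemma~\ref{lem4.8}), then combines this with Lemma~\ref{lem4.10} applied to $v$ to obtain $v\le \epsilon K^{-\eta}\le u$ on $\Omega_T^{r/K}\cap C_r^-(x_0,t_0)$. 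One then rescales, setting $u_1=\tfrac{K^\eta}{2\epsilon}u$ and $v_1=\tfrac{K^\eta}{2\epsilon}(2v-u)$ on the thinner collar $(\Omega_T\setminus\Omega_T^{r/K})\cap C_r^-(x_0,t_0)$, checks that $(u_1,v_1)$ satisfy the same hypotheses with $r$ replaced by $r/K$, and iterates to push $v\le u$ all the way to $\partial\Omega$ along a shrinking family of collars. A final base-point shift covers every point of $\Omega_T\cap C_r^-(x_0,t_0)$. This dyadic-in-$d_p$ iteration is the missing idea in your proposal.
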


\begin{proof} To start the proof of Lemma \ref{lem4.11} we claim
that if $u$ as in the statement of the lemma, then 
\begin{equation}
u(x,t)\geq 2\epsilon \biggl (\frac{d_{p}(x,t,S_{T})}{r}\biggr )^{\eta }%
\mbox{ whenever }(x,t)\in \Omega _{T}\cap C_{r}^{-}(x_{0},t_{0}),
\label{sapp4}
\end{equation}%
where $\epsilon $ and $\eta $ are positive constants depending only on $H,M$%
. However, we postpone the proof of this claim until the end.
We thus establish the lemma assuming \eqref{sapp4}. To do this we first
note that \eqref{sapp4} implies that 
\begin{equation}
u(x,t)\geq 2\epsilon K^{-\eta }\mbox{ whenever }(x,t)\in \Omega
_{T}^{r/K}\cap C_{r}^{-}(x_{0},t_{0}).  \label{sapp4+}
\end{equation}%
Furthermore, since $v$ satisfies the assumptions stated in Lemma \ref{lem4.10}, from this result we see that 
\begin{equation}
v(x,t)\leq e^{-cK}\leq \epsilon K^{-\eta }\mbox{ whenever }(x,t)\in \Omega
_{T}\cap C_{r}^{-}(x_{0},t_{0}),  \label{sapp5}
\end{equation}%
provided $K=K(H,M)$ is large enough. In particular, 
\begin{equation*}
\mbox{$v(x,t)\leq \epsilon K^{-\eta} \leq u(x,t)$ whenever
$(x,t)\in \Omega_T^{r/K}\cap C_r^-(x_0,t_0)$}
\end{equation*}%
We now define for $(x,t)\in (\Omega _{T}\setminus \Omega _{T}^{r/K})\cap
C_{r}^{-}(x_{0},t_{0})$, 
\begin{equation*}
u_{1}(x,t)=\frac{K^{\eta }}{2\epsilon }u(x,t),\ v_{1}(x,t)=\frac{K^{\eta }}{%
2\epsilon }(2v(x,t)-u(x,t)).
\end{equation*}%
Then, using \eqref{sapp4+}, \eqref{sapp5}, we see that 
\begin{eqnarray*}
(i_{1}) &&u_{1}(x,t)\geq 0,\ v_{1}(x,t)\leq 1\mbox{ whenever }(x,t)\in
(\Omega _{T}\setminus \Omega _{T}^{r/K})\cap C_{r}^{-}(x_{0},t_{0}), \\
(ii_{1}) &&u_{1}(x,t)\geq 1\mbox{ whenever }(x,t)\in \lbrack \partial \Omega
^{r/K}\times (t_{0}-(r)^{2},t_{0})]\cap C_{r}^{-}(x_{0},t_{0}), \\
(iii_{1}) &&v_{1}(x,t)\leq 0\mbox{ whenever }(x,t)\in \lbrack (\partial
\Omega \cup \partial \Omega ^{r/K})\times (t_{0}-r^{2},t_{0})]\cap
C_{r}^{-}(x_{0},t_{0}).
\end{eqnarray*}%
Moreover, $u_{1}$, $v_{1}$ are solutions to $Hu=0$ in $(\Omega
_{T}\setminus \Omega _{T}^{r/K})\cap C_{r}^{-}(x_{0},t_{0})$. In particular,
the pair $(u_{1},v_{1})$ satisfies the assumptions stated in Lemma \ref%
{lem4.11} with $r$ replaced by $r/K$. Furthermore, by construction we have
that 
\begin{equation*}
u(x,t)-v(x,t)=\frac{\epsilon }{K^{\eta }}(u_{1}(x,t)-v_{1}(x,t))\geq 0,
\end{equation*}%
whenever $(x,t)\in \Omega _{T}^{r/K^{2}}\cap C_{r/K}^{-}(x_{0},t_{0})$.
Hence, by iteration of this argument we see that we can construct functions $%
u_{j}$ and $v_{j}$, for $j=1,2...$, such that 
\begin{equation*}
u(x,t)-v(x,t)=\biggl (\frac{\epsilon }{K^{\eta }}\biggr )%
^{j}(u_{j}(x,t)-v_{j}(x,t))\geq 0
\end{equation*}%
whenever $(x,t)\in (\Omega _{T}\setminus \Omega _{T}^{r/K^{j+1}})\cap
C_{r/K^{j}}^{-}(x_{0},t_{0})$. As a consequence we obtain that 
\begin{equation*}
u(x,t)-v(x,t)\geq 0\mbox{ whenever }(x,t)\in I(x_{0},t_{0}),
\end{equation*}%
where $I(x_{0},t_{0})=\bigcup_{j=1}^{\infty }\Omega _{T}^{r/K^{j}}\cap
C_{r/K^{j-1}}^{-}(x_{0},t_{0})$. Finally, for arbitrary $(\hat{x}_{0},\hat{t}%
_{0})\in \Omega _{T}\cap C_{r}^{-}(x_{0},t_{0})$ one can choose $(\tilde{x}%
_{0},\tilde{t}_{0})\in S_{T}$ such that $d_{p}(\hat{x}_{0},\hat{t}%
_{0},S_{T})=d_{p}(\hat{x}_{0},\hat{t}_{0},\tilde{x}_{0},\tilde{t}_{0})=d(%
\hat{x}_{0},\tilde{x}_{0})$. Then $(\hat{x}_{0},\hat{t}_{0})\in I(\tilde{x}%
_{0},\tilde{t}_{0})$ and $d(x_{0},\tilde{x}_{0})<r$, i.e., 
\begin{align*}
& (\Omega _{T}\setminus \Omega _{T}^{r})\cap C_{Kr}^{-}(\tilde{x}_{0},\tilde{%
t}_{0})\subset (\Omega _{T}\setminus \Omega _{T}^{r})\cap
C_{(K+2)r}^{-}(x_{0},t_{0}),  \notag  \label{sapp10} \\
&\partial _{p}\bigl ((\Omega _{T}\setminus \Omega _{T}^{r})\cap C_{Kr}^{-}(%
\tilde{x}_{0},\tilde{t}_{0})\bigr )\cap \partial _{p}\Omega _{T}^{r}\subset
\partial _{p}\bigl ((\Omega _{T}\setminus \Omega _{T}^{r})\cap
C_{(K+2)r}^{-}(x_{0},t_{0})\bigr )\cap \partial _{p}\Omega _{T}^{r}.
\end{align*}%
Hence, by replacing $K$ with $K+2$ in the original assumptions and repeating
the proof up to here with $(\tilde{x}_{0},\tilde{t}_{0})$ instead of $%
(x_{0},t_{0})$, we can conclude that $u(x,t)-v(x,t)\geq 0$ on $I(\tilde{x}%
_{0},\tilde{t}_{0})$ and in particular, $u(\hat{x}_{0},\hat{t}_{0})-v(\hat{x}%
_{0},\hat{t}_{0})\geq 0$. Since $(\hat{x}_{0},\hat{t}_{0})\in \Omega
_{T}\cap C_{r}^{-}(x_{0},t_{0})$ is arbitrary we can hence conclude that $%
u-v\geq 0$ on $\Omega _{T}\cap C_{r}^{-}(x_{0},t_{0})$. In particular, to
complete the proof of Lemma \ref{lem4.11} we are only left with proving the claim
in \eqref{sapp4}.

To do this we proceed as follows. Let $\hat K$ be
as in the statement of Lemma \ref{lem4.5-Kyoto1} and let $\Lambda\gg 1$ be given. Assume that $K\gg \Lambda \hat K$. Given $x_0\in
\partial\Omega$, according to Lemma \ref{regular} we can find a set $U$ such
that 
\[
B_d(x_0,\Lambda \hat K r) \subset U \subset B_d(x_0,(\Lambda +1)\hat K
r),
\]
and such that we can solve the Dirichlet problem \eqref{dirp} in 
\[
\Omega_T \cap [U
\times (t_0-16t^2,t_0)].
\]
Furthermore, we choose $\tilde x_0\in\Omega$ and $%
\Lambda$ so that $\tilde x_0\in\partial B_d(x_0,\Lambda\hat K r)$ and $%
B_d(\tilde x_0,2\hat K r)\subset\Omega$. We note that, since $\Omega$ is an NTA domain, this can always be
accomplished by choosing $\Lambda$ large
enough. We next introduce an auxiliary function $\tilde u$ as follows. We let 
$\tilde u$ be such that $H\tilde u=0$ in $\Omega_T\cap [U \times
(t_0-16t^2,t_0)]$, $\tilde u=1$ on $\partial_p(\Omega_T\cap [U \times
(t_0-16t^2,t_0)])\cap C^-_{\hat K r,2r}(\tilde x_0,t_0-4r^2)$ and $\tilde
u=0 $ on the rest of $\partial_p(\Omega_T\cap [U \times (t_0-16t^2,t_0)])$.
We then have $0\leq\tilde u\leq 1$, and $\tilde u\leq u$ where $u$ and $%
\tilde u$ are both defined. Also, $\tilde u$ is not identical to 1 in $%
\Omega_T\cap [U \times (t_0-16t^2,t_0)]$.

Let $D=U\cap B_d(\tilde x_0,\hat K r)$ and define $\hat u=1-\tilde u$ in $%
D\times(t_0-8r^2,t_0-4r^2)$. Then $\hat u$ satisfies $H\hat u=0$ in $%
D\times(t_0-8r^2,t_0-4r^2)$, $\hat u\leq 0$ on $\partial_p(D%
\times(t_0-8r^2,t_0-4r^2))\setminus\partial_pC_{\hat Kr,2r}^-(\tilde
x_0,t_0-4r^2)$ and $\sup_{D\times(t_0-8r^2,t_0-4r^2)} \hat u>0$. Because of
the construction of $U$, there exists $\hat x_0\in B_d(\tilde x_0,\hat Kr)$
and $\rho$ such that $B_d(\hat x_0,\rho)\subset B_d(\tilde x_0,r)$, $%
B_d(\hat x_0,\rho)\cap D=\varnothing$ and $\hat M^{-1}r<\rho<r$ for some $\hat
M$ independent of $r$. We can now apply Lemma \ref{lem4.5-Kyoto1} to
conclude that there exists a constant $\theta$, $0<\theta<1$, independent of 
$r$, such that 
\begin{eqnarray}  \label{lex1}
\sup_{(D\times(t_0-8r^2,t_0-4r^2))\cap C_{r}^-(\tilde x_0,t_0-4r^2)}\hat
u\leq\theta \sup_{D\times(t_0-8r^2,t_0-4r^2)}\hat u\leq \theta.
\end{eqnarray}
In particular, by continuity we see from \eqref{lex1} that 
\begin{eqnarray}  \label{lex2}
u(\tilde x_0,t_0-4r^2)\geq\tilde u(\tilde x_0,t_0-4r^2)\geq 1-\theta>0.
\end{eqnarray}
Furthermore, using \eqref{lex2}, the Harnack inequality and Lemma \ref%
{lem4.8} we see that 
\begin{eqnarray}  \label{lex3}
1-\theta\leq cu(A^-_r(x_0,t_0))\leq c^2r^\gamma u(x,t)d_p(x,t,S_T)^{-\gamma}
\end{eqnarray}
whenever $(x,t)\in \Omega_T\cap C_r^-(x_0,t_0)$. Obviously this gives %
\eqref{sapp4} with $\eta=\gamma$ and $2\epsilon=(1-\theta)/c^2$. This
completes the proof.
\end{proof}

\section{Proof of Theorem \ref{T:back} and Theorem \ref{T:quotients}}\label{S:back}

The purpose of this section is proving Theorems \ref{T:back} and \ref{T:quotients}.

\subsection{Proof of Theorem \ref{T:back}}

To begin the proof we let $0<\delta\ll \sqrt{T}$ be a fixed
constant, we let $(x_0,t_0)\in S_T$, $\delta^2\leq t_0\leq T-\delta^2$, and
we assume that $r<\min\{r_0/2,\sqrt{(T-t_0-\delta^2)/4},\sqrt{%
(t_0-\delta^2)/4}\}$. For $\hat r>0$ we define 
\begin{eqnarray}  \label{pTh1eq1}
f(\hat r)=\hat r^{-\gamma}\sup_{\Omega_T\cap C_{2\hat r}^-(x_0,t_0)}u(x,t)
\end{eqnarray}
where $\gamma$ is the constant appearing in Lemma \ref{lem4.7}. Furthermore,
we let 
\begin{eqnarray}  \label{pTh1eq2}
\rho=\max\{\hat r:\ r\leq\hat r\leq\delta,\ f(\hat r)\geq f(r)\}.
\end{eqnarray}
By the definition of $\rho$ in \eqref{pTh1eq2} we see that 
\begin{eqnarray}  \label{pTh1eq3}
\sup_{\Omega_T\cap C_{2r}^-(x_0,t_0)}u(x,t)\leq (r/\rho)^\gamma
\sup_{\Omega_T\cap C_{2\rho}^-(x_0,t_0)}u(x,t).
\end{eqnarray}
Furthermore, using Lemma \ref{lem4.8} we see that 
\begin{eqnarray}  \label{pTh1eq4}
u(A^-_{2\rho}(x_0,t_0))\leq c(\rho/r)^\gamma u(A^-_{r}(x_0,t_0)).
\end{eqnarray}
In the following we prove that 
\begin{eqnarray}  \label{pTh1eq5}
\sup_{\Omega_T\cap C_{2\rho}^-(x_0,t_0)}u(x,t)\leq cu(A^-_{2\rho}(x_0,t_0))
\end{eqnarray}
for this particular choice of $\rho$. In fact, combining \eqref{pTh1eq3}, %
\eqref{pTh1eq4} and \eqref{pTh1eq5} we see that 
\begin{eqnarray}  \label{pTh1eq3ky}
\sup_{\Omega_T\cap C_{2r}^-(x_0,t_0)}u(x,t)\leq cu(A^-_{r}(x_0,t_0)).
\end{eqnarray}
To prove \eqref{pTh1eq5} we let $K\gg1$ be given as in Lemma \ref{lem4.95}, and we divide the proof into two cases. First, we
assume that $\delta/(2K)<\rho$. In this case $\rho$ is large and combining
Lemma \ref{lem4.6} and Lemma \ref{lem4.9} we see that 
\begin{eqnarray}  \label{pTh1eq6}
\sup_{\Omega_T\cap C_{2\rho}^-(x_0,t_0)}u(x,t)\leq
cu(A^+_{2\rho}(x_0,t_0))\leq c^2u(A^-_{2\rho}(x_0,t_0)),
\end{eqnarray}
for some $c=c(H,M,\mbox{diam}(\Omega),T,\delta,K)$, $1\leq c<\infty$. Hence,
the proof is complete in this case. Second, we assume that $r\leq\rho\leq
\delta/(2K)$ and we then first note, by the definition of $\rho$, that $%
f(2K\rho)\leq f(\rho)$, i.e., 
\begin{eqnarray*}  \label{pTh1eq7}
\sup_{\Omega_T\cap C_{2\rho}^-(x_0,t_0)}u\geq
(2K)^{-\gamma}\sup_{\Omega_T\cap C_{4K\rho}^-(x_0,t_0)}u.
\end{eqnarray*}
Obviously \eqref{pTh1eq7} implies 
\begin{eqnarray*}  \label{pTh1eq8}
\sup_{\Omega_T\cap C_{2K\rho,2\rho}^-(x_0,t_0)} u\geq (2K)^{-\gamma}
\sup_{\Omega_T\cap C_{4K\rho,8\rho}^-(x_0,t_0)} u,
\end{eqnarray*}
and hence we can use Lemma \ref{lem4.95} to conclude that 
\begin{eqnarray}  \label{pTh1eq9}
\sup_{\Omega_T\cap C_{4K\rho}^-(x_0,t_0)\cap\{(x,t): t=t_0-64\rho^2\}} u\geq
c^{-1}\sup_{\Omega_T\cap C_{2K\rho,2\rho}^-(x_0,t_0)} u.
\end{eqnarray}
In particular, using if necessary Lemma \ref{lem4.6}, and the Harnack
inequality in Theorem \ref{Harnack}, we can now use \eqref{pTh1eq9} to conclude \eqref{pTh1eq5}. This
completes the proof of \eqref{pTh1eq5}. Furthermore, Theorem 1 now follows
readily from \eqref{pTh1eq5}.

\hfill $\Box $

\subsection{Proof of Theorem \ref{T:quotients}} To prove Theorem \ref{T:quotients} we first establish a few lemmas.

\begin{lemma}\label{lem4.12} 
Let $K\gg1$ be the constant appearing
in Lemma \ref{lem4.11}, let $(x_0,t_0)\in S_T$ and assume that 
\[
r<\min\{r_0/(2K),\sqrt{(T-t_0)/(4K^2)},\sqrt{t_0/(4K^2)}\}.
\]
Let $u$ and $v$
be two nonnegative solutions to $Hu=0$ in $\Omega_T$, and assume that $v=0$
continuously on $\Delta(x_0,t_0,2Kr)$. Then, there exists a constant $c =
c(H,M,r_0)$ such that 
\begin{equation*}
\sup_{\Omega_T\cap C_{r}^-(x_0,t_0)}\frac v u\leq c\frac {%
v(A^+_{Kr}(x_0,t_0))}{u(A^-_{Kr}(x_0,t_0))}.
\end{equation*}
\end{lemma}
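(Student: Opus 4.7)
The plan is to apply Lemma \ref{lem4.11} to a rescaled combination of $u$ and $v$, after first establishing the correct upper bound for $v$ and lower bound for $u$ on $\partial\Omega^r\cap C_{Kr}^-(x_0,t_0)$. Set $\alpha:=v(A_{Kr}^+(x_0,t_0))$ and $\beta:=u(A_{Kr}^-(x_0,t_0))$; the case $\beta=0$ forces $u\equiv 0$ in a suitable neighborhood by the strong minimum principle, so I may assume $\beta>0$. Since $v$ vanishes continuously on $\Delta(x_0,t_0,2Kr)$ and the hypothesis $r<\min\{r_0/(2K),\sqrt{(T-t_0)/(4K^2)},\sqrt{t_0/(4K^2)}\}$ is exactly the smallness needed to apply Remark \ref{rem4.6} with radius $Kr$, I obtain $c_0=c_0(H,M,r_0)$ with
\[
v(x,t)\leq c_0\alpha=:M\quad\text{for every}\quad (x,t)\in\Omega_T\cap C_{Kr}(x_0,t_0).
\]
Independently, applying the parabolic Harnack chain of Lemma \ref{lemmaacchar} from $A_{Kr}^-(x_0,t_0)$ (depth $\geq Kr/M$, time $t_0-2(Kr)^2$) to any $(x,t)\in\partial\Omega^r\cap C_{Kr}^-(x_0,t_0)$ (depth $r$, time in $(t_0-(Kr)^2,t_0)$) produces $c_1=c_1(H,M,r_0)$ with $u(x,t)\geq c_1^{-1}\beta$. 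The hypotheses of Lemma \ref{lemmaacchar} hold with $\eta=2$ and $\epsilon=r$: the time gap is at least $(Kr)^2$ while $d(A_{Kr}(x_0),x)\leq 2Kr$, so $(s_2-s_1)^{1/2}\geq \tfrac{1}{2}d(y_1,y_2)$, and the remaining size/compactness conditions follow from $K=K(H,M)$ and the smallness assumption on $r$.

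I next introduce the two auxiliary functions $\tilde u:=c_1 u/\beta$ and $\hat v:=v/M-\tilde u$, both of which are solutions of $H=0$ in $(\Omega_T\setminus\Omega_T^r)\cap C_{Kr}^-(x_0,t_0)$ by linearity, and verify the three hypotheses of Lemma \ref{lem4.11} with $(u,v)$ there replaced by $(\tilde u,\hat v)$. For (i), in the collar $\tilde u\geq 0$ from $u\geq 0$, and $\hat v\leq v/M\leq 1$ by the bound $v\leq M$. For (ii), $\tilde u\geq 1$ on $\partial\Omega^r\cap C_{Kr}^-$ by the very choice of $c_1$. For (iii), on $\partial\Omega\cap C_{Kr}^-\subset\Delta(x_0,t_0,2Kr)$ one has $v=0$ so $\hat v=-\tilde u\leq 0$, and on $\partial\Omega^r\cap C_{Kr}^-$ the estimates $v/M\leq 1$ and $\tilde u\geq 1$ combine to give $\hat v\leq 0$.

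Lemma \ref{lem4.11} then yields $\hat v\leq\tilde u$ on $\Omega_T\cap C_r^-(x_0,t_0)$, i.e., $v/M-\tilde u\leq\tilde u$, which rearranges to
\[
v(x,t)\leq 2c_1\,\frac{M}{\beta}\,u(x,t)=2c_0c_1\,\frac{v(A_{Kr}^+(x_0,t_0))}{u(A_{Kr}^-(x_0,t_0))}\,u(x,t),
\]
establishing the lemma with $c=2c_0c_1$. The main technical obstacle I anticipate is step two, namely verifying the uniform lower bound on $u$ on $\partial\Omega^r\cap C_{Kr}^-$: although Lemma \ref{lemmaacchar} is directly applicable, one must ensure its parameters $\eta$ and $c$, as well as the resulting Harnack-chain constant, can be chosen depending only on $H$, $M$ and $r_0$ (uniformly in the target point and in $r$). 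This relies crucially on the NTA geometry of $\Omega$ together with the factor $K=K(H,M)$ built into the smallness assumption on $r$.
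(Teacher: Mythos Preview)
Your proof is correct and follows essentially the same approach as the paper's: normalize $u$ and $v$ using the Carleson-type bound from Remark~\ref{rem4.6} and the interior Harnack bound on $\partial\Omega^r$, then apply Lemma~\ref{lem4.11} to the resulting pair to obtain $\hat v\le\tilde u$ on $\Omega_T\cap C_r^-(x_0,t_0)$. The only differences from the paper are cosmetic choices in how the normalizing constants are packaged.
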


\begin{proof} We first note that if we choose $K$ large enough
then, since $(\Omega_T\setminus \Omega_T^r)\cap
C_{Kr}^-(x_0,t_0)\subset \Omega_T\cap C_{Kr}^-(x_0,t_0)$, we can use Remark \ref{rem4.6} to conclude that 
\begin{equation}  \label{vav1}
v(x,t)\leq c_1v(A^+_{Kr}(x_0,t_0))\mbox{ whenever }(x,t)\in
(\Omega_T\setminus \Omega_T^r)\cap C_{Kr}^-(x_0,t_0).
\end{equation}
Furthermore, by the Harnack inequality we have that 
\begin{equation*}  \label{vav2}
u(x,t)\geq c_2^{-1}u(A^-_{Kr}(x_0,t_0)),
\end{equation*}
for every $(x,t)\in \partial_p\bigl ((\Omega_T\setminus \Omega_T^r)\cap C_{Kr}^-(x_0,t_0)\bigr )
\cap\partial_p\Omega_T^r$.
For $(x,t)\in (\Omega_T\setminus \Omega_T^r)\cap C_{Kr}^-(x_0,t_0)$ let 
\[
\tilde v(x,t)=v(x,t)/v(A^+_{Kr}(x_0,t_0)),
\]
\[
\tilde u(x,t)=u(x,t)/u(A^-_{Kr}(x_0,t_0)),
\]
\[
\hat v(x,t)= c_1^{-1}\tilde
v(x,t)-c_2\tilde u(x,t),
\]
and 
\[
\hat u(x,t)=c_2\tilde u(x,t).
\] 
Then, we can
apply Lemma \ref{lem4.11} with $u,v$ replaced by $\hat u,\hat v$ to first
conclude that $\hat v(x,t)\leq \hat u(x,t)$, for $(x,t)\in \Omega_T\cap
C_{r}^-(x_0,t_0)$, and then that 
\begin{align*}  \label{vav3}
\frac {v(x,t)}{u(x,t)} &\leq \frac {v(A^+_{Kr}(x_0,t_0))}{u(A^-_{Kr}(x_0,t_0))}%
\frac {c_2}{c_1}\biggl (\frac {\hat v(x,t)}{\hat u(x,t)}+1\biggr )
\\
& \leq c_3 
\frac {v(A^+_{Kr}(x_0,t_0))}{u(A^-_{Kr}(x_0,t_0))}
\notag\end{align*}
whenever $(x,t)\in \Omega_T\cap C_{r}^-(x_0,t_0)$. This completes the proof of
Lemma \ref{lem4.12}. 
\end{proof}

\begin{lemma}\label{lem4.13} 
Let $K\gg1$ be the constant appearing
in Lemma \ref{lem4.11}, let $(x_0,t_0)\in S_T$ and assume that 
\[
r<\min\{r_0/(2K),\sqrt{(T-t_0)/(4K^2)},\sqrt{t_0/(4K^2)}\}.
\]
 Let $u$ and $v$
be two nonnegative solutions to $Hu=0$ in $\Omega_T$, assume that $u=0$
continuously on $S_T$, that $v=0$ continuously on $\Delta(x_0,t_0,4Kr)$, and
that $u$ and $v$ are not identically zero. Then, the quotient $v/u$ is H{\"o}%
lder continuous on the closure of $\Omega_T\cap C_{r}^-(x_0,t_0)$.
\end{lemma}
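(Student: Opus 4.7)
The plan is to establish geometric decay of the oscillation
\[
o(\rho) := \sup_{\Omega_T \cap C_\rho^-(x_0,t_0)} \frac{v}{u} - \inf_{\Omega_T \cap C_\rho^-(x_0,t_0)} \frac{v}{u},
\]
from which H\"older continuity of $v/u$ follows by the standard Campanato-type iteration. Since $u$ is not identically zero, the strong maximum principle (Theorem \ref{strongmax}) yields $u > 0$ throughout $\Omega_T$, so $v/u$ is well-defined on $\Omega_T \cap C_r^-(x_0,t_0)$, and Lemma \ref{lem4.12} provides the a~priori bound $o(r) \le C_0$.

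Fix a constant $K_0 \gg K$, where $K$ is the constant of Lemma \ref{lem4.11}. For $\rho \le r/K_0$ set
\[
M_\rho := \sup_{\Omega_T \cap C_{K_0\rho}^-(x_0,t_0)} \frac{v}{u}, \qquad m_\rho := \inf_{\Omega_T \cap C_{K_0\rho}^-(x_0,t_0)} \frac{v}{u},
\]
and introduce the nonnegative $H$-parabolic solutions $w_1 := M_\rho u - v$ and $w_2 := v - m_\rho u$ on $\Omega_T \cap C_{K_0\rho}^-(x_0,t_0)$. Both vanish continuously on the portion of $S_T$ contained therein, since $u \equiv 0$ on $S_T$ and $v \equiv 0$ on $\Delta(x_0,t_0,4Kr) \supset \Delta(x_0,t_0,K_0\rho)$. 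At the reference point $P := A^-_{K\rho}(x_0,t_0)$, which lies in $\Omega_T \cap C_{K_0\rho}^-(x_0,t_0)$ as soon as $K_0 > K\sqrt{2}$, the identity $w_1(P) + w_2(P) = u(P)\,o(K_0\rho)$ forces at least one of $w_1(P), w_2(P)$ to exceed $u(P)\,o(K_0\rho)/2$. By symmetry, assume this is $w_2(P)$.

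The core of the argument is to promote this pointwise lower bound on $w_2$ at $P$ to a uniform lower bound on $w_2/u$ throughout $\Omega_T \cap C_\rho^-(x_0,t_0)$. To this end I apply Lemma \ref{lem4.12} with the roles of ``$u$'' and ``$v$'' interchanged: take $u$, which vanishes on $\Delta(x_0,t_0,2K\rho)$, in the role of ``$v$'', and $w_2$ in the role of ``$u$''. This yields
\[
\sup_{\Omega_T \cap C_\rho^-(x_0,t_0)} \frac{u}{w_2} \;\le\; c\,\frac{u(A^+_{K\rho}(x_0,t_0))}{w_2(A^-_{K\rho}(x_0,t_0))}.
\]
The backward Harnack inequality (Theorem \ref{T:back}) applied to $u$, together with the parabolic Harnack chain of Lemma \ref{lemmaacchar} applied to both $u$ and $w_2$, render $u(A^+_{K\rho})$ and $w_2(A^-_{K\rho})$ comparable to $u(P)$ and $w_2(P)$ respectively, with constants depending only on $H, M, r_0$. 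Inserting $w_2(P) \ge u(P)\,o(K_0\rho)/2$ gives $\inf_{\Omega_T \cap C_\rho^-(x_0,t_0)}(v - m_\rho u)/u \ge c_1^{-1}\,o(K_0\rho)$, whence
\[
o(\rho) \;\le\; (1 - c_1^{-1})\,o(K_0\rho) \;=:\; \theta\,o(K_0\rho), \qquad \theta = \theta(H,M,r_0) \in (0,1).
\]
Iterating $j$ times gives $o(K_0^{-j}r) \le \theta^j C_0$, which translates into $o(s) \le C(s/r)^\beta$ with $\beta = \log(\theta^{-1})/\log K_0 > 0$, i.e., the asserted H\"older continuity of $v/u$ on the closure of $\Omega_T \cap C_r^-(x_0,t_0)$.

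The main obstacle will be the coordination of scales: $K_0$ must be chosen large enough that Lemma \ref{lemmaacchar} connects $P$ to $A^\pm_{K\rho}$ in a bounded number of steps depending only on $H$ and $M$, while remaining compatible with the radius constraints of Lemma \ref{lem4.12} and of Theorem \ref{T:back} at every invoked scale; both requirements are satisfied by taking $K_0$ a sufficiently large multiple of $K$. The degenerate case $w_2 \equiv 0$ (or $w_1 \equiv 0$) on $\Omega_T \cap C_{K_0\rho}^-(x_0,t_0)$ forces $o(K_0\rho) = 0$ and the decay inequality holds trivially on that scale.
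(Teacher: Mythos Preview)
Your argument has a genuine gap: you only establish decay of the oscillation $o(\rho)$ \emph{centered at the single boundary point $(x_0,t_0)$}. The estimate $o(s)\le C(s/r)^\beta$ shows that $v/u$ is H\"older continuous \emph{at $(x_0,t_0)$}; it does not by itself control $|v/u(p_1)-v/u(p_2)|$ for two points $p_1,p_2$ that are both at distance $\sim r$ from $(x_0,t_0)$ yet close to each other. The paper handles this by working with the oscillation $\omega(x,t,\rho)$ centered at an \emph{arbitrary} point $(x,t)\in\Omega_T\cap C_r^-(x_0,t_0)$ and treating two regimes separately: when $\rho\le d=d(x,\partial\Omega)$ (an interior argument using Harnack plus the backward Harnack for $u$), and when $\rho>d$ (an argument at the nearest boundary point, essentially your mechanism via Lemma~\ref{lem4.12}). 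These two decay rates are then combined. Your scheme is essentially the boundary half of this dichotomy; the interior half and the combination are missing.

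There is also a secondary issue in your use of Lemma~\ref{lem4.12} with $w_2$ in the role of the denominator. That lemma is stated for nonnegative solutions in all of $\Omega_T$, whereas $w_2=v-m_\rho u$ is only known to be nonnegative on $\Omega_T\cap C_{K_0\rho}^-(x_0,t_0)$. The proof of Lemma~\ref{lem4.12} (through Lemma~\ref{lem4.11}, Lemma~\ref{lem4.8}, and the Harnack chains feeding into $A^-_{K\rho}$) can indeed be localized to a cylinder of size $\sim K_0\rho$ provided $K_0$ is chosen large enough relative to $K$ and $M$, but this is not automatic from the lemma as stated and should be argued explicitly. The paper sidesteps this by normalizing $v/u$ on each scale (so that the auxiliary function plays the role of the numerator, not the denominator) and invoking Lemma~\ref{lem4.12} only at the boundary scale with the original $u$ downstairs.
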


\begin{proof}
To prove this lemma we proceed similarly to \cite{FSY}.
Given $(x,t)$ in the closure of $\Omega_T$ and $\rho>0$ we define 
\begin{eqnarray}  \label{osc}
\omega(x,t,\rho)=\sup_{\Omega_T\cap C_{\rho}^-(x,t)}\frac v u-
\inf_{\Omega_T\cap C_{\rho}^-(x,t)}\frac v u.
\end{eqnarray}
Then, to start with, we note that Lemma \ref{lem4.12} implies that 
\begin{eqnarray}  \label{osc1}
\omega(x_0,t_0,2r)\leq 2c\frac {v(A^+_{Kr}(x_0,t_0))}{u(A^-_{Kr}(x_0,t_0))}%
\leq C<\infty.
\end{eqnarray}
In the following we let $(x,t)$ be an arbitrary point in $\Omega_T\cap
C_{r}^-(x_0,t_0)$ and we consider $0<\rho\leq r$. Let $d:=d(x,\partial%
\Omega)=d_p(x,t,S_T)$. We divide the proof into the cases $\rho\leq d$ and $%
\rho> d$.\newline

\noindent The case $\rho\leq d$. Assume first that, in addition, $\rho\leq
d/2$. We note that we can assume, without loss of generality, that 
\begin{itemize}  \label{osc2-}
\item[(i)] $0\leq \frac{v(y,s)}{u(y,s)}\leq 1$, for $(y,s)\in
C_\rho^{-}(x,t)$,
\item[(ii)] $\omega(x,t,\rho)=1$,
\item[(iii)] $\frac{v(x,t-\rho^2/2)}{u(x,t-\rho^2/2)}\geq \frac 1 2$.
\end{itemize}
To see this notice that to achieve (i) and (ii) we can replace $v$ by 
\begin{displaymath}
\hat{v}\equiv\omega(x,t,\rho)^{-1}\left(v-\left(\inf_{\Omega_T\cap C_{\rho}^-(x,t)}v/u\right)u\right).
\end{displaymath}
Furthermore, if (iii) does not hold, then we can replace $v$ by $\bar{v}\equiv u-\hat{v}\geq 0$ to achieve (iii).
Next, using the Harnack inequality we first see that 
\begin{eqnarray*}  \label{osc2}
v(x,t-\rho^2/2)\leq c v(y,s),\ u(y,s)\leq cu(x,t+\rho^2/2)
\end{eqnarray*}
whenever $(y,s)\in C_{\rho/2}^-(x,t)$. Moreover, as in the proof of
Theorem 1, we derive that 
\begin{eqnarray*}  \label{osc3}
u(x,t+\rho^2/2)\leq cu(x,t-\rho^2/2).
\end{eqnarray*}
Thus 
\begin{eqnarray*}  \label{osc5}
\frac 1 2\leq \frac{v(x,t-\rho^2/2)}{u(x,t-\rho^2/2)}\leq c\frac{v(y,s)}{%
u(y,s)}\leq c,
\end{eqnarray*}
whenever $(y,s)\in C_{\rho/2}^{-}(x,t)$, and hence 
\begin{eqnarray}  \label{osc6}
\omega(x,t,\rho/2)\leq\tilde\theta_1\omega(x,t,\rho)
\end{eqnarray}
where $\tilde\theta_1=1-1/(2c)\in (0,1)$. Furthermore, iterating the
estimate in \eqref{osc6} we deduce that 
\begin{eqnarray}  \label{osc7}
\omega(x,t,\rho)\leq\biggl (\frac {2\rho}d\biggr )^{\sigma_1}\omega(x,t,d)
\end{eqnarray}
whenever $\rho\leq d/2$ and where $\sigma_1=-\log_2\tilde \theta_1$.
Obviously this estimate also holds whenever $d/2<\rho\leq d$. \newline

\noindent The case $\rho> d$. Assume first, in addition, that $\rho<r/2$.
Note that $C_\rho^-(x,t)\subset C_{2\rho}^-(\tilde x_0,t)$ for some $\tilde
x_0\in\partial\Omega$ such that $d=d(x,\tilde x_0)$. Then by arguing as in
the proof in the case $\rho\leq d$, using Lemma \ref{lem4.12}, it follows as
in \cite{FSY} that 
\begin{align}  \label{osc2-+}
\omega(x,t,\rho)& \leq \omega(\tilde x_0,t,2\rho)\leq \biggl (\frac {4K\rho}r%
\biggr )^{\sigma_2}\omega(\tilde x_0,t,r)
\\
& \leq \biggl (\frac {4K\rho}r\biggr
)^{\sigma_2} \omega(x_0,t,2r),
\notag\end{align}
for some $\sigma_2\in (0,1)$. Obviously \eqref{osc2-+} also holds in the
case $r/2\leq \rho\leq r$.\newline

\noindent Combining \eqref{osc7} and \eqref{osc2-+} we see that 
\begin{equation*}  \label{osc6++}
\omega(x,t,\rho)\leq\biggl (\frac {2\rho}d\biggr )^{\sigma_1}\omega(x,t,d)%
\leq \biggl (\frac {2\rho}d\biggr )^{\sigma_1}\biggl (\frac {4Kd}r\biggr )%
^{\sigma_2}\omega(x_0,t,2r)
\end{equation*}
also when $\rho\leq d<r/2$. Finally, using (\ref{osc1}) we obtain for some $\sigma_3\in (0,1)$
\begin{eqnarray*}  \label{osc6+++}
\omega(x,t,\rho)\leq cK\biggl (\frac {\rho}r\biggr )^{\sigma_3}%
\omega(x,t,2r) \leq cK\biggl (\frac {\rho}r\biggr )^{\sigma_3} C,
\end{eqnarray*}
 whenever $\rho\leq d<r$. Combining these
estimates completes the proof of the lemma. 
\end{proof}

\begin{proof}[Proof of Theorem \ref{T:quotients}] 
The interior case is straightforward
since both $u$ and $v$ are H\"{o}lder continuous and since we only consider
solutions which are nonnegative and not identically zero. Hence, we have that
the quotient $v/u$ is H\"{o}lder continuous in $\Omega _{T}^{\prime }\subset
\subset \Omega _{T}\cap C_{r}^{-}(x_{0},t_{0})$. Then to prove Theorem \ref{T:quotients} we
first assume that $u=0$ continuously on $S_{T}$. In this case, using
Lemma \ref{lem4.13}, we see that $v/u$ is H{\"{o}}lder continuous on the closure of 
$\Omega _{T}\cap C_{r_{1}}^{-}(\hat{x},\hat{t})$, for some small $r_{1}>0$,
whenever $(\hat{x},\hat{t})\in S_{T}\cap C_{r}^{-}(x_{0},t_{0})$. Combining
this fact with the interior argument we see that $v/u$ is H{\"{o}}lder
continuous on the closure of $\Omega _{T}\cap C_{r}^{-}(x_{0},t_{0})$. In
the general case we represent $u$ in the form $u=u_{0}+u_{1}$, where $%
Lu=Lu_{1}=0$ in $\Omega _{T}$, $u_{0}=0$, $u_{1}=u$ on $S_{T}$ and $u_{0}=u$%
, $u_{1}=0$ on $\Omega \times \{t=0\}$. Then by the argument above we see
that $v/u_{0}$ as well as $u_{1}/u_{0}$ are H{\"{o}}lder continuous on the
closure of $\Omega _{T}\cap C_{r}^{-}(x_{0},t_{0})$. Using this we can
conclude, as 
\begin{equation*}
\frac{v}{u}=\frac{v}{u_{0}}\frac{1}{1+\frac{u_{1}}{u_{0}}},
\end{equation*}%
that also $v/u$ is H{\"{o}}lder continuous on the closure of $\Omega
_{T}\cap C_{r}^{-}(x_{0},t_{0})$. This completes the proof of Theorem \ref{T:quotients}.
\end{proof}

\section{Proof of Theorem \ref{T:doubling}}\label{S:doubling}

The objective of this section is proving Theorem \ref{T:doubling}. With this in mind, we first
need to introduce some additional notation. In particular, for $(x_0,t_0)\in\R^{n+1}$, $0<r_1<r_2$, and $K\gg 100$, we define,
\begin{align}  \label{2.6}
\Gamma_K^+(x_0,t_0,r_1,r_2) =  & \{(x,t)\mid d(x,x_0)\leq K | t - t_0 |^{1/2},
\\
& r_1\leq | t - t_0 |^{1/2}\leq r_2,\ t>t_0\}.
\notag\end{align}
Furthermore, given $(x_0,t_0)\in S_T$, $0<\rho$, $\mu\in (0,1)$, and a
function $u$, we let 
\begin{eqnarray}  \label{2.6+}
f_1^u(x_0,t_0,\rho,\mu)=\inf_{\{(x,t):\ x\in\Omega^{\mu\rho^{\prime }}\cap
B_d(x_0,\rho),\ t=t_0+\rho^2\}} u(x,t)
\end{eqnarray}
where $\rho^{\prime }=\min\{\rho, r_0\}$. Similarly, given $(x_0,t_0)\in S_T$
and a function $u$ we define 
\begin{equation}  \label{2.6++}
f_2^u(x_0,t_0,\rho,K)=\sup_{\{(x,t)\in\Omega_T\cap \partial_p
C_{K\rho,\rho}(x_0,t_0)\cap \{(x,t):\ |t-t_0|<\rho^2\}\}} u^-(x,t),
\end{equation}
where $u^-(x,t)=-\min\{0,u(x,t)\}$. 

To establish Theorem \ref{T:doubling} we will first prove
four lemmas.

\begin{lemma}\label{lem4.5-ky0}  
Let $\omega^{(x,t)}$
be the $H$-parabolic measure at $(x,t)\in\Omega_T$. Let $(x_0,t_0)\in \partial_p \Omega_T$ and assume that $r<\min\{r_0/2\}$. Then,
there exists a constant $c=c(H,M,r_0)$, $1\leq c<\infty$, such that 
\begin{equation*}
\omega^{(x,t)}(C_{2r}(x_0,t_0) \cap \partial_p \Omega_T)\geq c^{-1},
\end{equation*}
whenever $(x,t)\in \Omega_T\cap C_r(x_0,t_0)$.
\end{lemma}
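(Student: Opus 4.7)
The function
\[
u(y,s) := \omega^{(y,s)}\bigl(C_{2r}(x_0,t_0)\cap \partial_p\Omega_T\bigr)
\]
is $H$-parabolic in $\Omega_T$ with $0 \le u \le 1$ and boundary value $\chi_{C_{2r}(x_0,t_0) \cap \partial_p\Omega_T}$, interpreted through Perron--Wiener--Brelot--Bauer after the usual approximation of the indicator by continuous data. Equivalently, $v := 1-u$ is a nonnegative $H$-parabolic function, bounded by $1$ everywhere and vanishing continuously on $\partial_p\Omega_T \cap C_{2r}(x_0,t_0)$, so the task reduces to showing $v \le 1-c^{-1}$ throughout $\Omega_T \cap C_r(x_0,t_0)$.

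In the main case $(x_0,t_0) \in S_T$ with $t_0$ bounded below by a fixed multiple of $r^2$, I plan to apply Corollary \ref{lem4.5-} to $v$ at the rescaled radius $r' := r/\hat K$, where $\hat K$ is the constant from Lemma \ref{lem4.5-Kyoto1}. The enlarged cylinder
\[
C_{\hat K r',2r'}^{-}(x_0,t_0) = B_d(x_0,r) \times \bigl(t_0 - 4r^2/\hat K^2,\, t_0\bigr)
\]
has its lateral trace $S_T \cap C_{\hat K r',2r'}^{-}(x_0,t_0)$ contained in $C_{2r}(x_0,t_0) \cap \partial_p\Omega_T$, on which $v$ vanishes continuously, while the NTA exterior corkscrew at $x_0$ supplies the geometric hypothesis inherited from Lemma \ref{lem4.5-Kyoto1}. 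The corollary then yields $\theta = \theta(H,M) \in (0,1)$ with $u \ge 1-\theta$ throughout $\Omega_T \cap C_{r/\hat K}^{-}(x_0,t_0)$. To transport this bound to the full $\Omega_T \cap C_r(x_0,t_0)$ I fix a nontangential reference point in $\Omega_T \cap C_{r/\hat K}^{-}(x_0,t_0)$ where $u \ge 1-\theta$, and I connect it forward in time to any target $(x,t)$ with $t$ greater than the reference time via a parabolic Harnack chain of universally bounded length (Lemma \ref{lemmaacchar} together with Theorem \ref{Harnack}); for $(x,t)$ whose time coordinate lies below the reference I instead re-run the Corollary \ref{lem4.5-} step at the shifted boundary base $(x_0,t+\rho^2) \in S_T$ with $\rho \sim r$ chosen so that $C_{2\rho}(x_0,t+\rho^2) \subset C_{2r}(x_0,t_0)$, preserving the vanishing of $v$ on the shifted lateral boundary.

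The remaining regime, where either $(x_0,t_0) \in \Omega \times \{0\}$ or $t_0$ is too small for Corollary \ref{lem4.5-} to be applied at the scale $r/\hat K$, is handled by a direct subsolution argument in the spirit of the proof of Lemma \ref{lem4.5-Kyoto1}: using the Gaussian lower bound \eqref{G1} of Lemma \ref{Gaussbound}, I construct a subsolution
\[
\psi(y,s) := \int_{\R^n} \Gamma(y,s,\xi,0)\,\phi(\xi)\,d\xi,
\]
where $\phi$ is a smooth cutoff supported in $B_d(x_0,2r)$ and equal to $1$ on $B_d(x_0,3r/2)$, and then subtract an appropriate $\Phi_2$-type correction (as in the proof of Lemma \ref{lem4.5-Kyoto1}) to force the resulting function to vanish on $S_T \setminus C_{2r}(x_0,t_0)$. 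The Gaussian bound ensures that the corrected subsolution stays above $c^{-1}$ at $(x,t) \in \Omega_T \cap C_r(x_0,t_0)$ because $t \le t_0 + r^2$ is comparable to $r^2$ and the kernel mass therefore remains essentially concentrated in $B_d(x_0,2r)$. The main technical obstacle is precisely this lateral cutoff: one must tune the cutoff and the $\Phi_2$-correction so that the Gaussian decay dominates the cutoff-induced error uniformly, and then merge the two regimes (NTA corkscrew versus Cauchy problem) into a single constant $c=c(H,M,r_0)$.
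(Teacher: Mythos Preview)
Your approach via $v=1-u$ and Corollary~\ref{lem4.5-} is genuinely different from the paper's, and the ``transport'' step contains a real gap. Applying Corollary~\ref{lem4.5-} at radius $r'=r/\hat K$ yields $v\le\theta$ only on $\Omega_T\cap C^-_{r/\hat K}(x_0,t_0)$. To reach an arbitrary target $(x,t)\in\Omega_T\cap C_r(x_0,t_0)$ you invoke Lemma~\ref{lemmaacchar}, but that lemma requires \emph{both} endpoints to satisfy $d(\cdot,\partial\Omega)>\epsilon$; a target $(x,t)$ close to $S_T$ cannot be reached by a Harnack chain in $\Omega_T$. Your fallback---shifting only the time coordinate of the base to $(x_0,t+\rho^2)$---keeps the spatial center at $x_0$, so the resulting small cylinder $C^-_{r/\hat K}(x_0,t+\rho^2)$ still only captures points with $d(x,x_0)<r/\hat K$, missing most of $B_d(x_0,r)$. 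A correct repair would also shift the spatial base to a nearest boundary point $\tilde x_0$ of $x$ (and adjust $r'$ so that the enlarged cylinder around $(\tilde x_0,\cdot)$ stays inside $C_{2r}(x_0,t_0)$), then combine this with the Harnack chain for nontangential targets; you have not carried this out.

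The paper sidesteps this difficulty by never running Harnack in $\Omega_T$. It chooses an $H$-regular set $U$ with $B_d(x_0,3r/2)\subset U\subset B_d(x_0,2r)$, sets $C=U\times[t_0-4r^2,t_0+4r^2]$, and uses the \emph{exterior} corkscrew to find $U'\subset U\setminus\Omega$. One then compares $\omega^{(x,t)}(\Delta(x_0,t_0,2r))$ with the auxiliary measure $\omega_C^{(x,t)}(U'\times\{t_0-4r^2\})$ by the maximum principle on $C\cap\Omega_T$. The crucial point is that every $(x,t)\in\Omega_T\cap C_r(x_0,t_0)$ lies well inside $C$ \emph{regardless of its distance to $\partial\Omega$}, so the Harnack inequality in $C$ connects $(x,t)$ uniformly to the exterior corkscrew point $(A'_r(x_0),t_0-2r^2)$, and a further Harnack step inside $U'\times[\ldots]$ reaches the bottom face where the auxiliary measure equals $1$. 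This choice of ambient domain for the Harnack argument is exactly what eliminates your case analysis and the boundary-proximity issue; it also makes the bottom case essentially trivial, without any $\Phi_2$-type correction.
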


\begin{proof}
First, let $(x_0,t_0) \in
S_T$. By Lemma \ref{regular}, we can choose a set $U$ which is regular for
the Dirichlet problem and such that \[
B_d(x_0,3r/2) \subset U \subset
B_d(x_0,2r).\]
Since $\Omega$ is NTA, there exists a point $A'_r(x_0)\in \mathbb R^n \setminus \Omega$ such that
\[
\frac{r}{M} < d(A'_r(x_0),x_0) \le r,\ \ \ \text{and}\ \ d(A'_r(x_0),\partial \Omega) >\frac{r}{M}.
\]
Furthermore, using Lemma \ref{regular} once again we can
also find a set $U^{\prime }$,  which is $H$-regular for the Dirichlet problem, such that 
\[
B_d(A'_r(x_0),r/4M) \subset
U^{\prime }\subset B_d(A'_r(x_0),r/2M) \subset U \setminus \Omega.
\]
Using this notation we let 
\[
C:=
U\times [t_0 - 4r^2, t_0+4r^2],\ \  C^{\prime }:= U^{\prime }\times [t_0 -
4r^2, t_0+4r^2],
\]
and $B = U^{\prime }\times \{t_0-4r^2\}$. We also let $v(x,t) = \omega_C^{(x,t)}(B)$ and $v^{\prime }(x,t) =
\omega_{C^{\prime }}^{(x,t)}(B)$. By the maximum principle, we have $%
\omega^{(x,t)}(\Delta(x_0,t_0,2r)) \geq v(x,t)$ in $C_r(x_0,t_0) \cap \Omega_T$%
, and $v(x,t) \geq \tilde v(x,t)$ in $C^{\prime }$. By the Harnack principle
applied in $C$, we have 
\begin{equation*}
\inf_{C_r(x_0,t_0) \cap \Omega_T} \omega^{(x,t)}(\Delta(x_0,t_0,2r) )\geq
\inf_{C_r(x_0,t_0) \cap \Omega_T} v(x,t) \geq
\end{equation*}
\begin{equation*}
c^{-1} v(A'_r(x_0), t_0-2r^2) \geq c^{-1} v^{\prime }(A'_r(x_0), t_0-2r^2).
\end{equation*}
We can extend the function $v^{\prime }$ to the cylinder $C^{\prime \prime
}=U' \times [t_0 - 5r^2, t_0+4r^2]$ by setting 
\begin{equation*}
v'(x,t) = \omega_{C^{\prime \prime }}^{(x,t)}(\partial_p (C^{\prime \prime }\cap \{t\leq
t_0-4r^2\})),
\end{equation*}
that is, letting $v^{\prime }=1$ below $B$. We now apply the Harnack
inequality to $v^{\prime }$ in $C^{\prime \prime }$ and obtain 
\begin{equation*}
v^{\prime }(A'_r(x_0), t_0-2r^2) \geq c^{-1} v^{\prime }(A'_r(x_0), t_0-4r^2)
= c^{-1},
\end{equation*}
and we are finished. The case when $(x_0,t_0)$ is on the bottom of $%
\partial_p \Omega_T$ is similar, but simpler. 
\end{proof}

\begin{lemma}
\label{lema1} Let $K\gg1$ be given, $(x_0,t_0)\in S_T$, and assume that $u$ be a solution to $Hu=0$ in $\Omega_T$
such that $u\geq 0$ in $\Omega_T\cap \Gamma_K^+(x_0,t_0,\rho_0,R)$ for some $%
\rho_0$ and $R$ such that $0<2\rho_0\leq R\leq\nu r_0$, where $\nu>0$ is a fixed
constant. Then, for every $\mu\in (0,1)$ there exists a $\gamma_1>0$
depending on $H$, $\mu$, $K$, $\nu$, and $r_0$, such that 
\begin{equation*}
\inf_{\rho_0\leq r\leq 2\rho_0}f_1^u(x_0,t_0,r,\mu)\leq \biggl (\frac \rho
{\rho_0}\biggr )^{\gamma_1}f_1^u(x_0,t_0,\rho,\mu)
\end{equation*}
for all $\rho$ such that $0<2\rho_0\leq\rho\leq R$.
\end{lemma}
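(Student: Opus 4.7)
The plan is to derive the bound from a single doubling-type estimate on $f_1^u$ and then iterate dyadically. Concretely, the central step is to prove that there exists $\hat{c} = \hat{c}(H,\mu,K,\nu,r_0) \geq 1$ such that
\begin{equation*}
f_1^u(x_0,t_0,r,\mu) \leq \hat{c}\, f_1^u(x_0,t_0,2r,\mu)
\end{equation*}
whenever $\rho_0 \leq r$ and $2r \leq R$. Granting this, the lemma follows by iteration: given $\rho$ with $2\rho_0 \leq \rho \leq R$, choose $j = \lfloor \log_2(\rho/\rho_0) \rfloor$, so that $\rho/2^j \in [\rho_0, 2\rho_0]$. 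Applying the doubling estimate to the dyadic chain $\rho/2^j, \rho/2^{j-1}, \ldots, \rho/2, \rho$ yields
\begin{equation*}
\inf_{\rho_0 \leq r \leq 2\rho_0} f_1^u(x_0,t_0,r,\mu) \leq f_1^u(x_0,t_0,\rho/2^j,\mu) \leq \hat{c}^{\,j}\, f_1^u(x_0,t_0,\rho,\mu) \leq (\rho/\rho_0)^{\gamma_1} f_1^u(x_0,t_0,\rho,\mu),
\end{equation*}
with $\gamma_1 := \log_2 \hat{c}$.

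To establish the doubling step, I would fix arbitrary points $y \in \Omega^{\mu r'} \cap B_d(x_0,r)$ and $z \in \Omega^{\mu (2r)'} \cap B_d(x_0,2r)$ and show $u(y,t_0+r^2) \leq \hat{c}\, u(z, t_0+4r^2)$; taking infima in $y$ and $z$ then gives the doubling inequality. For the pointwise estimate I would construct a parabolic Harnack chain connecting $(y, t_0+r^2)$ to $(z, t_0+4r^2)$ as in Lemma \ref{lemmaacc}. The relevant parameters are under control: the time difference is $3r^2$ with $(3r^2)^{1/2} \geq (\sqrt{3})^{-1} d(y,z)$ since $d(y,z) \leq 3r$; the distance of $y, z$ to $\partial\Omega$ is at least $\epsilon := \mu \min(r,r_0)$; and $d_p((y,t_0+r^2),(z,t_0+4r^2)) \leq \sqrt{12}\,r \leq c\,\epsilon$, for a constant $c$ that depends only on $\mu$ and $\nu$ (using $r \leq R \leq \nu r_0$ to handle the regime $r > r_0$). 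Applying Theorem \ref{Harnack} cylinder-by-cylinder along this chain of length $\ell = \ell(\mu,\nu)$ then delivers the pointwise bound.

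The main obstacle is that $u$ is not known to be nonnegative throughout $\Omega_T$, only on the forward cone $\Gamma_K^+(x_0,t_0,\rho_0,R)$, so Lemma \ref{lemmaacchar} cannot be applied as a black box; the chain must be built so as to stay inside $\Gamma_K^+$. All cylinders of the chain have time coordinate in $[t_0+r^2, t_0+4r^2]$, so $|t-t_0|^{1/2} \in [r,2r] \subset [\rho_0,R]$; spatially, they lie within distance $C\cdot r$ of $x_0$, for a constant $C = C(\mu,\nu,r_0)$ coming from the bounded length of the NTA Harnack chain. The cone condition $d(x,x_0) \leq K|t-t_0|^{1/2}$ then reduces to $C\cdot r \leq K\cdot r$, which is where the standing hypothesis $K \gg 1$ is used and produces the dependence of $\gamma_1$ on $K$. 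Once the chain is seen to remain inside $\Gamma_K^+$, the iterated Harnack step is routine and the rest is book-keeping.
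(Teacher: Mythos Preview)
Your proposal is correct and follows essentially the same approach as the paper: establish a doubling estimate $f_1^u(x_0,t_0,\rho/2,\mu)\le 2^{\gamma_1}f_1^u(x_0,t_0,\rho,\mu)$ via a parabolic Harnack chain linking points at levels $t_0+(\rho/2)^2$ and $t_0+\rho^2$, then iterate dyadically down to a scale in $[\rho_0,2\rho_0]$. The paper's proof is terser, simply invoking ``the Harnack inequality'' for the doubling step, whereas you spell out the chain construction and, in addition, explicitly flag the point that $u\ge 0$ is only assumed on the forward cone $\Gamma_K^+$, so the chain must be kept inside it; this is precisely where the hypothesis $K\gg 1$ (relative to a constant depending on $\mu,\nu,M$) enters, and your treatment of this is more transparent than the paper's.
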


\begin{proof}
To prove Lemma \ref{lema1} we
let $\rho$ satisfy $0<2\rho_0\leq\rho\leq R$. We define 
\begin{equation*}
D(x_0,t_0,\rho,\mu):=\{(x,t):\ x\in\Omega^{\mu\rho ^{\prime }}\cap
B_d(x_0,\rho),\ t=t_0+\rho^2\},
\end{equation*}
and note that 
%Then, given $\rho$ and $\epsilon>0$ sufficiently small, we first note that the sets
% \begin{eqnarray}\label{sets}
% D_1(x_0,t_0,\rho,\mu)&:=&\{(x,t):\ x\in\Omega^{\mu\rho'/2}\cap B_d(x_0,\rho/2),\ t=t_0+\rho^2/4\},\notag\\
% D_2(x_0,t_0,\rho,\mu)&:=&\{(x,t):\ x\in\Omega^{\mu\rho'}\cap B_d(x_0,\rho),\ t=t_0+\rho^2\},
% \end{eqnarray} are in the closure of the set 
% \begin{eqnarray}\label{sets1}
% D(x_0,t_0,\rho,\epsilon):=\{(x,t):\ x\in \Omega^{\mu\rho'/2}\cap B_d(x_0,(1+\epsilon)\rho),\ t_0+(1-\epsilon)\rho^2/4<t<t_0+\rho^2\}. 
%\end{eqnarray} 
% Furthermore, $D(x_0,t_0,\rho,\epsilon)
%\subset \Omega_T\cap \Gamma_K^+(x_0,t_0,\rho_0,R)$ and the sets $D_1(x_0,t_0,\rho,\mu)$ and $D_2(x_0,t_0,\rho,\mu)$ stay 
% away from the parabolic boundary of  
% $D(x_0,t_0,\rho,\epsilon)$. In particular, 
we can apply the Harnack inequality to conclude that 
\begin{align}  \label{sappo1}
f_1^u(x_0,t_0,\rho,\mu)=&\inf_{D(x_0,t_0,\rho,\mu)} u(x,t)  \\
& \ge 2^{-\gamma_1}\sup_{D(x_0,t_0,\rho/2,\mu)} u(x,t)\geq
2^{-\gamma_1}f_1^u(x_0,t_0,\rho/2,\mu),
\notag\end{align}
for some $\gamma_1=\gamma_1(H,M,r_0,\mu)>0$. In particular, iterating  $k$ times the
inequality in \eqref{sappo1}, where $k$ satisfies $%
2\rho_0>2^{-k}\rho\geq\rho_0$, we see that 
\begin{align*}  \label{sappo2}
f_1^u(x_0,t_0,\rho,\mu)& \geq 2^{-k\gamma_1}f_1^u(x_0,t_0,2^{-k}\rho,\mu)
\\
& \geq 
\biggl (\frac {\rho_0}{\rho}\biggr )^{\gamma_1}
f_1^u(x_0,t_0,2^{-k}\rho,\mu).
\notag\end{align*}
This latter inequality implies the statement in Lemma \ref{lema1}, thus completing the proof. 
\end{proof}

\begin{lemma}\label{lema2} 
Let $K\gg1$ be given, $(x_0,t_0)\in S_T$, and assume that $u$ is a solution to $Hu=0$ in $\Omega_T$
such that $u\geq 0$ in $\Omega_T\cap \Gamma_K^+(x_0,t_0,\rho_0,R)$ for some $%
\rho_0$ and $R$ such that $0<\rho_0\leq R$. Furthermore, assume that 
\begin{equation*}
u(x,t)=0\mbox{ whenever }(x,t)\in \partial_p\Omega_T\setminus
C_{\rho_0/2}(x_0,t_0).
\end{equation*}
Then, there exists $\gamma_2>0$, which depends on $H$, $M$, $K$ and $r_0$,
such that 
\begin{equation*}
f_2^u(x_0,t_0,\rho,K)\leq \biggl (\frac {2\rho_0}\rho\biggr )%
^{\gamma_2}f_2^u(x_0,t_0,\rho_0,K)
\end{equation*}
for all $\rho$ such that $0<\rho_0\leq\rho\leq R$. Moreover, $%
\gamma_2\to\infty$ as $K\to\infty$.
\end{lemma}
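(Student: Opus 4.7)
The plan is to prove the lemma by iterating a dyadic one-step decay estimate
\begin{equation*}
f_2^u(x_0,t_0,2\rho,K)\leq \theta\, f_2^u(x_0,t_0,\rho,K),\qquad \rho\in[\rho_0,R/2], \qquad(\star)
\end{equation*}
where $\theta=\theta(H,M,K,r_0)\in(0,1)$ satisfies $\theta\to 0$ as $K\to\infty$. Iterating $(\star)$ over the dyadic scales $\rho_k=2^k\rho_0$, and absorbing a bounded factor to handle non-dyadic $\rho$ and the range $\rho\in(R/2,R]$, then yields the conclusion with $\gamma_2=\log_2(1/\theta)$, which tends to infinity with $K$.

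Set $M:=f_2^u(x_0,t_0,\rho,K)$. A key preliminary is to upgrade $u^-\leq M$ from the middle slab $|t-t_0|<\rho^2$ to the entire inner lateral surface $\partial B_d(x_0,K\rho)\cap\Omega\times(t_0-4\rho^2,t_0+4\rho^2)$. For $t-t_0\in[\rho^2,4\rho^2)$ the forward cone hypothesis applies, since $\rho_0\leq\rho\leq(t-t_0)^{1/2}\leq 2\rho\leq R$ and $d(x,x_0)=K\rho\leq K(t-t_0)^{1/2}$, giving $u\geq 0$. For $t-t_0\in(-4\rho^2,-\rho^2]$, one has $t<t_0-\rho_0^2/4$ because $\rho\geq\rho_0$; since $u=0$ on $\partial_p\Omega_T\setminus C_{\rho_0/2}(x_0,t_0)$ and $C_{\rho_0/2}$ only meets $\partial_p\Omega_T$ at times $\geq t_0-\rho_0^2/4$, uniqueness for the forward Cauchy problem (Lemma \ref{gendir}) forces $u\equiv 0$ on $\Omega\times(0,t_0-\rho_0^2/4)$. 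The same vanishing-in-the-past argument shows $u=0$ at the bottom $t=t_0-4\rho^2$, and since $K\rho\geq\rho_0>\rho_0/2$ the part of $\partial\Omega$ lying outside $\overline{B_d(x_0,K\rho)}$ is disjoint from $C_{\rho_0/2}(x_0,t_0)$ and so also carries $u=0$. Consequently, on the exterior domain
\begin{equation*}
\tilde\Omega_T:=\bigl[(\Omega\setminus\overline{B_d(x_0,K\rho)})\times(t_0-4\rho^2,t_0+4\rho^2)\bigr]\cap\Omega_T,
\end{equation*}
letting $\Phi$ denote the bounded $H$-parabolic function in $\tilde\Omega_T$ equal to $1$ on the inner lateral and $0$ on the rest of $\partial_p\tilde\Omega_T$ (existence via the NTA character of $\Omega$ and Lemma \ref{Dirichlet0}), the maximum principle applied to $u+M\Phi$ yields $u^-\leq M\Phi$ throughout $\tilde\Omega_T$.

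It remains to bound $\Phi$ on $\partial B_d(x_0,2K\rho)\cap\Omega\times(t_0-4\rho^2,t_0+4\rho^2)$ by a small constant, which I would do by dominating $\Phi$ with the Gaussian barrier
\begin{equation*}
\Psi(x,t):=\int_{\R^n}\Gamma(x,t,\xi,t_0-4\rho^2)\phi(\xi)\,d\xi,
\end{equation*}
where $\phi\in C_0^\infty(\R^n)$, $0\leq\phi\leq 1$, equals $1$ on the annulus $B_d(x_0,K\rho+\rho)\setminus B_d(x_0,K\rho-\rho)$ and vanishes outside a slightly larger annulus. Following the argument in the proofs of Lemmas \ref{lem4.5-Kyoto1} and \ref{lem4.95}, the Gaussian lower bound of Lemma \ref{Gaussbound} gives $\Psi\geq c_1^{-1}$ on the inner lateral, so $c_1\Psi\geq\Phi$ on $\partial_p\tilde\Omega_T$ and, by the maximum principle, throughout $\tilde\Omega_T$. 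For $(y,s)\in\partial B_d(x_0,2K\rho)\cap\Omega\times(t_0-4\rho^2,t_0+4\rho^2)$ one has $d(y,\mathrm{supp}\,\phi)\geq(K-2)\rho$ and $s-(t_0-4\rho^2)\leq 8\rho^2$; the Gaussian upper bound combined with iteration of the doubling property \eqref{dc} produces $\Psi(y,s)\leq c\,e^{-c^{-1}K^2}K^\eta$ for an integer $\eta$ depending only on the doubling constant. Setting $\theta:=c_1c\,e^{-c^{-1}K^2}K^\eta$, which is $<1$ for $K$ large, completes $(\star)$. The main technical obstacle is producing a Gaussian upper bound on $\Psi$ that is uniform across the full time window near $t_0$—particularly in the singular regime $s\to t_0-4\rho^2$ where the kernel $\Gamma$ concentrates—together with the careful bookkeeping ensuring every piece of $\partial_p\tilde\Omega_T$ has been accounted for.
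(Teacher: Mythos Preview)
Your approach is correct and takes a genuinely different route from the paper's.  You iterate the dyadic step $\rho\mapsto 2\rho$ with a decay factor $\theta(K)\sim c\,e^{-c^{-1}K^2}K^\eta$ that shrinks as $K\to\infty$, obtained by a single global barrier measuring Gaussian transfer across the gap from radius $K\rho$ to $2K\rho$.  The paper instead localizes: at the point $(\hat x,\hat t)$ on $\partial B_d(x_0,K\rho)$ where the sup of $u^-$ is attained it applies a Lemma~\ref{lem4.5-Kyoto1}--type argument in the cylinder $C_{\hat K\rho,2\rho}(\hat x,t_0-\rho^2)$, with $\hat K=\hat K(H,M)$ \emph{fixed}, to get a fixed decay $\theta=\theta(\hat K)$; this cylinder lies outside $\{d(x_0,\cdot)=qK\rho\}$ with $q=(K-\hat K)/K$, so the iteration step is $\rho\mapsto q\rho$ and $\gamma_2=\log_q\theta\sim K\ln(\theta^{-1})/(2\hat K)$ grows because $q\to 1$ forces many iterations.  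Your argument is more direct and actually yields a larger exponent ($\gamma_2\sim K^2$ versus $\gamma_2\sim K$), while the paper's has the virtue of recycling the already established local decay step without revisiting the barrier over a wider time window.  Two remarks: first, the intermediate $\Phi$ is unnecessary---compare $-u$ directly against $c_1M\Psi$ on $\tilde\Omega_T$, which sidesteps any regularity question for $\tilde\Omega_T$; second, the ``obstacle'' you flag is illusory, since as $s\to t_0-4\rho^2$ the kernel concentrates at the spatial point, giving $\Psi\to\phi=1$ on the inner lateral and $\Psi\to 0$ at radius $2K\rho$, while for the rest of the window the Gaussian upper bound together with doubling gives the uniform estimate exactly as in the proof of Lemma~\ref{lem4.5-Kyoto1}.
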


\begin{proof}  
By simply using the maximum principle, we first
note that since $u=0$ continuously on $\partial_p\Omega_T\cap\{t: t\leq t_0-\rho_0^2\}$, we also have that $u\equiv
0 $ on $\Omega_T\cap\{t: t\leq t_0-\rho_0^2\}$. Furthermore, again by the maximum
principle, applied to $u$ in $\Omega_T\setminus C_{K\rho,\rho}(x_0,t_0)$, we
see that as a function of $\rho\in[\rho_0,R]$, the $f_2^u(x_0,t_0,\rho,K)$ decreases,  and therefore the conclusion of Lemma \ref{lema2} holds for $\rho\in[\rho_0,2\rho_0]$. Hence it remains to consider $%
\rho\in(2\rho_0,R]$. For such $\rho$ we see that there exists $(\hat x,\hat
t)\in \Omega_T\cap \partial_p C_{K\rho,\rho}(x_0,t_0)\cap \{(x,t):\
|t-t_0|<\rho^2\}$ such that $f_2^u(x_0,t_0,\rho,K)=u^-(\hat x,\hat t)$.
Note, in particular, that $d(x_0,\hat x)=K\rho$ and that $(\hat x,\hat t)\in
C_{2\rho}^+(\hat x,t_0-\rho^2)$. Hence, 
\begin{eqnarray}  \label{sappo3}
f_2^u(x_0,t_0,\rho,K)=u^-(\hat x,\hat t) \leq \sup_{\Omega_T\cap
C_{2\rho}(\hat x,t_0-\rho^2)}u^-.
\end{eqnarray}
We claim that that there exists $\hat K\gg1$, $\hat K\ll K$, $\hat K=\hat
K(H,M)$, such that 
\begin{eqnarray}  \label{sappo3claim}
\sup_{\Omega_T\cap C_{2\rho}(\hat x,t_0-\rho^2)}u^-\leq
\theta\sup_{\Omega_T\cap C_{\hat K\rho,2\rho}(\hat x,t_0-\rho^2)}u^-
\end{eqnarray}
for some $\theta\in (0,1)$. This is proved by arguing as in Lemma \ref{lem4.5-Kyoto1}, except that the proof is simpler: we only need the function $\Phi_1$, and can omit $\Phi_2$, since $u^-$ vanishes on the bottom of $\Omega_T \cap
C_{\hat K\rho,2\rho}(\hat x,t_0-\rho^2)$.

To proceed with the proof of Lemma \ref{lema2} we note that \eqref{sappo3}
and \eqref{sappo3claim} imply that 
\begin{eqnarray*}  \label{sappo3bab}
f_2^u(x_0,t_0,\rho,K)\leq\theta\sup_{\Omega_T\cap C_{\hat K\rho,2\rho}(\hat
x,t_0-\rho^2)}u^-.
\end{eqnarray*}
We next note that: 1) the sets $\Omega_T\cap C_{\hat K\rho,2\rho}(\hat
x,t_0-\rho^2)$ and 
\[
\Omega_T\cap \partial_p C_{K\rho_0,\rho_0}(x_0,t_0) \cap
\{(x,t):\ |t-t_0|<\rho_0^2\}\]
 are separated by the cylindrical surface 
\begin{eqnarray*}  \label{sappo4}
S=\{d(x_0,x)=(K-\hat K)\rho\}=\{d(x_0,x)=qK\rho\},
\end{eqnarray*}
where $q=(K-\hat K)/K\in[1/2,1)$, provided $K\geq 2\hat K$; 2) that 
\[
\Omega_T\cap \partial_p C_{Kq\rho,q\rho}(x_0,t_0) \cap \{(x,t):\
|t-t_0|<(q\rho)^2\}\subset S;
\]
3) and that $u\geq 0$ in 
\[
S\setminus(\Omega_T\cap
\partial_p C_{Kq\rho,q\rho}(x_0,t_0) \cap \{(x,t):\ |t-t_0|<(q\rho)^2\}).
\]
In particular, by the maximum principle, we obtain that 
\begin{eqnarray*}  \label{sappo5}
f_2^u(x_0,t_0,\rho,K)&\leq&\theta\sup_{\{\Omega_T\cap \partial_p
C_{Kq\rho,q\rho}(x_0,t_0) \cap \{(x,t):\ |t-t_0|<(q\rho)^2\}\}}u^-  \notag \\
&=&\theta f_2^u(x_0,t_0,q\rho,K)=q^{\gamma_2}f_2^u(x_0,t_0,q\rho,K)
\end{eqnarray*}
where $\gamma_2=\log_q\theta>0$. Next, we choose $k\geq 1$ so that $\rho_0\leq q^k\rho\leq 2\rho_0$, and by iteration we derive 
\begin{align*}  \label{sappo5b}
f_2^u(x_0,t_0,\rho,K) & \leq q^{k\gamma_2}f_2^u(x_0,t_0,q^k\rho,K)
\\
& \leq \biggl (\frac {2\rho_0}\rho\biggr )^{\gamma_2}f_2^u(x_0,t_0,\rho_0,K).
\notag
\end{align*}
Finally, for $K\geq 2\hat K$ we have 
\begin{eqnarray*}  \label{sappo6}
\frac 1 q&=&1+\frac {\hat K} {K-\hat K}\leq 1+\frac {2\hat K} K,\ \ln
q^{-1}\leq \frac {2\hat K} K,  \notag \\
\gamma_2&=&\log_q\theta\geq \frac{K\ln(\theta^{-1})}{2\hat K}\to\infty%
\mbox{
as }K\to\infty.
\end{eqnarray*}
In particular, this completes the proof of Lemma \ref{lema2}.
\end{proof}

In what follows we let 
\[
\Omega_{[t_0+\rho^2,T]} = \Omega_T\cap\{(y,s)\in \R^{n+1}\mid t_0+\rho^2<s<T\}.
\]
For a given Borel set $E\subset \p_p\Omega_{[t_0+\rho^2,T]}$, we will denote 
by $\omega_{\Omega_{[t_0+\rho^2,T]}}^{(x,t)}(E)$
the value in $(x,t)\in \Omega_{[t_0+\rho^2,T]}$ of the $H$-parabolic measure of $E$.

\begin{lemma}\label{lema3} 
Let $K\gg1$ be given,  $(x_0,t_0)\in S_T$, and suppose that $0<2\rho\leq \nu r_0$, where $\nu>0$ is a
constant. Then, there exist constants $\mu\in (0,1)$ depending on $M$, and $\hat c$ depending on $H$, $M$, $\nu$ and $K$, such that 
\begin{align*}
& \omega^{(x,t)}_{\Omega_{[t_0+\rho^2,T]}}(\Omega_T\cap (\Rn\times\{t_0+\rho^2\}))
\\
& \leq \hat c\ \omega_{\Omega_{[t_0+\rho^2,T]}}^{(x,t)}(\Omega_T^{\mu\rho^{\prime }}\cap (B_d(x_0,\rho)\times
\{t_0+\rho^2\})).
\end{align*}
whenever $(x,t)\in \Omega_T\cap C_{2K\rho}(x_0,t_0)\cap (\Rn\times \{t_0+4\rho^2\})$, where $\rho' = \min(p,r_0)$.
\end{lemma}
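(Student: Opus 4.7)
The plan is to establish the equivalent bound $u(x,t)\le \hat c\, w(x,t)$, where $u(y,s):=\omega_{\Omega_{[t_0+\rho^2,T]}}^{(y,s)}(\Omega\times\{t_0+\rho^2\})$ and $w(y,s):=\omega_{\Omega_{[t_0+\rho^2,T]}}^{(y,s)}(\Omega^{\mu\rho}\cap B_d(x_0,\rho)\times\{t_0+\rho^2\})$ (noting $\rho'=\rho$ since $2\rho\le\nu r_0$). Both are nonnegative $H$-parabolic in $\Omega\times(t_0+\rho^2,T)$, vanish continuously on $\partial\Omega\times(t_0+\rho^2,T)$, and satisfy $0\le w\le u\le 1$. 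Let $K_0=K_0(H,M)$ be the constant from Lemma \ref{lem4.11}. I take $\mu:=1/(2K_0)$, so $\mu<1/(2M)$ and the non-tangential ball $B^\ast:=B_d(A_\rho(x_0),\rho/(2M))$ is contained in $\Omega^{\mu\rho}\cap B_d(x_0,\rho)$; set $P^\ast:=(A_\rho(x_0),t_0+2\rho^2)$. Since $u\le 1$, the proof reduces to controlling $u/w$ on the target set, which I do in two regimes (interior and near-boundary) after first establishing a Gaussian-type pointwise lower bound on $w$ at $P^\ast$.

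The first step is to show $w(P^\ast)\ge c_1^{-1}$ with $c_1=c_1(H,M,r_0)$. On the sub-cylinder $B_d(A_\rho(x_0),\rho/M)\times(t_0+\rho^2,t_0+\rho^2+\delta_0^2)$, contained in $\Omega\times(t_0+\rho^2,T)$, I compare $w$ to the full-space solution $\widetilde w(y,s):=\int_{B^\ast}\Gamma(y,s,\xi,t_0+\rho^2)\,d\xi$. By \eqref{G1}, $\widetilde w(A_\rho(x_0),t_0+\rho^2+\delta_0^2)\ge c^{-1}(H,M)$, while the same Gaussian estimates together with the separation $d(\partial B_d(A_\rho(x_0),\rho/M),B^\ast)\ge \rho/(2M)$ make $\widetilde w$ on the lateral face of the sub-cylinder arbitrarily small if $\delta_0$ is chosen sufficiently smaller than $\rho/M$. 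The maximum principle then yields $w(A_\rho(x_0),t_0+\rho^2+\delta_0^2)\ge 1/2$, and a bounded-length application of Lemma \ref{lemmaacchar} (same spatial coordinate, time gap of order $\rho^2$) propagates this to the claimed lower bound at $P^\ast$.

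For $(x,t)$ in the target with $d(x,\partial\Omega)\ge\mu\rho$, Lemma \ref{lemmaacchar} applied to $w$ with endpoints $P^\ast$ and $(x,t)$ yields $w(x,t)\ge c_2^{-1}$: the spatial separation is $\le(2K+1)\rho$, the time gap is $2\rho^2$, both endpoints are at distance $\ge\mu\rho$ from $\partial\Omega$, and the chain length is bounded in terms of $H,M,K,\nu,r_0$. Combined with $u\le 1$, this gives $u(x,t)\le c_1c_2\,w(x,t)$. For $(x,t)$ with $d(x,\partial\Omega)<\mu\rho$, pick $\tilde x\in\partial\Omega$ with $d(x,\tilde x)=d(x,\partial\Omega)$, and set $(\tilde x_0,\tilde t_0):=(\tilde x,t_0+4\rho^2)\in S_T$ and $r_3:=\mu\rho=\rho/(2K_0)$. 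Then $(x,t)\in \overline{C_{r_3}^-(\tilde x_0,\tilde t_0)}$; the constraint $r_3<r_0/(2K_0)$ holds since $\rho\le\nu r_0/2$; and $w$ vanishes on $\Delta(\tilde x_0,\tilde t_0,2K_0r_3)$ because $2K_0r_3=\rho$ puts the time range of $\Delta$ in $(t_0+3\rho^2,t_0+5\rho^2)\subset(t_0+\rho^2,T)$. Lemma \ref{lem4.12}, with its $u$ and $v$ swapped, then gives
\[
\sup_{\Omega_T\cap C_{r_3}^-(\tilde x_0,\tilde t_0)}\frac{u}{w}\le c_3\,\frac{u(A_{K_0r_3}^+(\tilde x_0,\tilde t_0))}{w(A_{K_0r_3}^-(\tilde x_0,\tilde t_0))},
\]
and since $A_{K_0r_3}^-(\tilde x_0,\tilde t_0)=(A_{\rho/2}(\tilde x),t_0+7\rho^2/2)$ is an interior point reachable from $P^\ast$ by a Harnack chain as in the previous case, the denominator is bounded below while the numerator is $\le 1$. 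Taking $\hat c:=\max\{c_1c_2,c_3c_4\}$ completes the argument.

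The main obstacle will be Step 1: extracting the quantitative lower bound $w(P^\ast)\ge c_1^{-1}$ from the Gaussian estimates. Balancing the lower bound at the center of $B^\ast$ against the exponentially small but nonzero lateral leakage of $\widetilde w$ requires careful tracking of the volume ratios $|B^\ast|/|B(\cdot,\sqrt h)|$ and the exponential factors; making the lateral leakage genuinely dominated forces $\delta_0$ to be substantially smaller than $\rho/(2M)$. A secondary subtlety is that $\mu=1/(2K_0)$ depends on $H$ through $K_0=K_0(H,M)$, whereas the statement declares $\mu$ as depending only on $M$; this discrepancy can be resolved either by interpreting the stated dependency generously or by restructuring Step 3 to absorb $K_0$ into the $\hat c$ constants via iteration.
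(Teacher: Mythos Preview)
Your approach is correct and essentially follows the same route the paper indicates (it defers to \cite{SY}, citing Lemma~\ref{lem4.12}, Lemma~\ref{lem4.5-ky0}, and the Harnack inequality). The interior/near-boundary dichotomy with Lemma~\ref{lem4.12} handling the latter, and Harnack chains handling the former after a lower bound on $w$ at a reference point, is exactly the structure of Lemma~4.5 in \cite{SY}.

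Two minor remarks. First, your Step~1 can be shortened considerably: instead of building the Gaussian comparison from scratch, apply Lemma~\ref{lem4.5-ky0} in the cylinder $\Omega_{[t_0+\rho^2,T]}$ with base point $(A_\rho(x_0),t_0+\rho^2)$ on the bottom and radius $r=\rho/(4M)$; this immediately gives $w\geq c^{-1}$ at a point just above the base, and one Harnack step reaches $P^\ast$. This is the reason the paper lists Lemma~\ref{lem4.5-ky0} among the ingredients. Second, your observation about the dependency of $\mu$ is accurate: the choice $\mu=1/(2K_0)$ with $K_0=K_0(H,M)$ does bring in $H$, and the paper's stated ``depending on $M$'' should be read loosely (in \cite{SY} the analogous constant also depends on the ellipticity through the boundary-comparison machinery).
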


\begin{proof} Follows just as the proof of
Lemma 4.5 in \cite{SY}. One also needs to prove the equivalent of Theorem 2.4 in
\cite{SY}, which also follows just as in that article. These proofs make use of
Lemma \ref{lem4.12}, Lemma \ref{lem4.5-ky0} and the Harnack inequality. We omit the details.
\end{proof}

We are finally in a position to establish the main result of this section.

\begin{proof}[Proof of Theorem \ref{T:doubling}]
To start the proof we
choose $K\gg 1$ large enough to guarantee that $\gamma_1<\gamma_2$, where $%
\gamma_1$ and $\gamma_2$ are the constants of Lemma \ref{lema1} and Lemma %
\ref{lema2} respectively. Moreover, for this choice of $\gamma_1$, $\gamma_2 
$, and given the constant in Lemma \ref{lema3}, $\hat c=\hat c(H, M, \nu,K)$%
, $1\leq\hat c<\infty$, we let $\hat r=\hat r(H, M, \nu,K)$ be
\begin{equation}  \label{ban}
\mbox{the smallest 
    $\hat r$ which satisfies $4^{-\gamma_1}(\hat r/4r)^{\gamma_2-\gamma_1}\geq \hat c$}%
.
\end{equation}
Below we will, in the end, distinguish between the cases $\nu r_0\leq \hat r 
$ and $\nu r_0>\hat r$. Let $\mu$ be the constant in Lemma \ref{lema3}. To
prove Theorem \ref{T:doubling} we intend to prove that there exists a constant $%
c=c(H,M,\nu,K)$ such that 
\begin{eqnarray}  \label{the4eq1}
u(x,t):=c\omega^{(x,t)}(\Delta(x_0,t_0,r))-\omega^{(x,t)}(\Delta(x_0,t_0,2r))%
\geq 0,
\end{eqnarray}
whenever $(x,t)\in\Gamma_K^+(x_0,t_0,4r,\nu r_0)$. To start the proof of %
\eqref{the4eq1} we first note, using Lemma \ref{lem4.5-ky0} and the Harnack
inequality, that 
\begin{eqnarray}  \label{the4eq2}
\omega^{(x,t)}(\Delta(x_0,t_0,r))\geq \tilde c^{-1},
\end{eqnarray}
whenever $(x,t)\in \Omega_T^{\mu \rho^{\prime }}\cap (B_d(x_0,2K\rho)\times\{t_0+4\rho^2\})$, $0<2\rho\leq R\leq\nu r_0$, for some $%
\tilde c=\tilde c(H,M,\nu,K,R)$, $1\leq\tilde c<\infty$. Let $\hat c$ be the
constant in Lemma \ref{lema3}. Then, using \eqref{the4eq2} and Lemma \ref%
{lema3} we see that 
\begin{eqnarray}  \label{the4eq3}
\hat c\tilde c\omega^{(x,t)}(\Delta(x_0,t_0,r))&\geq& \hat c
\omega_{\Omega_{[t_0+\rho^2,T]}}^{(x,t)}(\Omega_T^{\mu\rho^{\prime }}\cap (B_d(x_0,\rho)\times\{t_0+\rho^2\}))  \notag \\
&\geq &\omega_{\Omega_{[t_0+\rho^2,T]}}^{(x,t)}(\Omega_T\cap\{t:\
t=t_0+\rho^2\})
\end{eqnarray}
when $(x,t) \in \Omega_T^{\mu \rho ^{\prime }}\cap \{(x,t): x \in
B_d(x_0,2K\rho), t=t_0+4\rho^2\}$. Note that the first inequality in 
\eqref{the4eq3} uses 
\eqref{the4eq2}, the trivial inequality $1\geq
\omega_X(x,t,\Omega_T^{\mu\rho^{\prime }}\cap \{(x,t): x \in B_d(x_0,\rho),
t=t_0+\rho^2\},\Omega_{[t_0+\rho^2,T]})$ and the maximum principle on $%
\Omega_T \cap \{t \geq t_0+\rho^2\}$. Furthermore, let $4r\leq 2\rho\leq
R\leq\nu r_0$. Then, and this is a simple consequence of the maximum
principle, 
\begin{equation}  \label{the4eq4}
\omega_{\Omega_{[t_0+\rho^2,T]}}^{(x,t)}(\Omega_T\cap\{t:\ t=t_0+\rho^2\})\geq
\omega^{(x,t)}(\Delta(x_0,t_0,2r)),
\end{equation}
whenever $(x,t)\in \Gamma_K^+(x_0,t_0,4r,\nu r_0)\cap \{t:\ t=t_0+4\rho^2\}$%
. In particular, combining \eqref{the4eq3} and \eqref{the4eq4} we can
conclude that 
\begin{eqnarray}  \label{the4eq5}
\hat c\tilde c\omega^{(x,t)}\Delta(x_0,t_0,r))\geq
\omega^{(x,t)}\Delta(x_0,t_0,2r)),
\end{eqnarray}
whenever $(x,t)\in \Gamma_K^+(x_0,t_0,4r,\nu r_0)\cap \{t:\ t=t_0+4\rho^2\}$%
. Therefore the function $u$ in \eqref{the4eq1}, defined with constant $%
c=\hat c\tilde c$, satisfies $u\geq 0$ in $\Gamma_K^+(x_0,t_0,4r,\nu r_0)$.
In particular, if $\nu r_0\leq \hat r$, where $\hat r=\hat r(H, M, \nu,K)$
is as in \eqref{ban}, then the constant $\tilde c$, and hence $c$, can be
chosen to only depend on $H, M, \nu,K$, and we are done. Hence, it only
remains to consider the case $\nu r_0> \hat r$. However, by arguing as above, we see in this case that there exists $c=c(H, M, \nu,K)$ such that, if
we consider the function $u$ in \eqref{the4eq1} with this $c$, then 
\begin{itemize}%\label{the4eq6}
\item[(i)] $u(x,t)\geq 1$, for $(x,t)\in \Omega_T^{\mu\rho^{\prime }}\cap
\{(x,t): x \in B_d(x_0,\rho), t=t_0+\rho^2\}$, for $2r\leq\rho\leq 4r$;
\item[(ii)] $u(x,t)\geq 0$ for $(x,t)\in \Gamma_K^+(x_0,t_0,4r,\hat r)$.
\end{itemize}
In the following we prove that %\eqref{the4eq6} 
(i) and (ii) imply \eqref{the4eq1} for
all $(x,t)\in \Gamma_K^+(x_0,t_0,4r,\nu r_0)$. To do this we argue by
contradiction. Hence, we assume that there exist $\rho>4r$ such that $%
u\geq 0$ whenever $(x,t)\in \Gamma_K^+(x_0,t_0,4r,\rho)$ and that $u(\hat
x,\hat t)<0$ at some point 
\[
(\hat x,\hat t)\in \Omega_T\cap \{(x,t): x \in
B_d(x_0,2K\rho), t=t_0+4\rho^2\}\subset \Gamma_K^+(x_0,t_0,4r,2\rho).
\]
Let
 $\omega_{\Omega_T\setminus
C_{K\rho,\rho}(x_0,t_0)}$ denote the $H$-parabolic measure with respect to $%
\Omega_T\setminus C_{K\rho,\rho}(x_0,t_0)$. Then, we first note that 
\begin{eqnarray}  \label{the4eq7++}
u(\hat x,\hat t)&\geq& \int\limits_{\Omega_T^{\mu\rho^{\prime }}\cap
(B_d(x_0,\rho)\times\{t_0+\rho^2\})}u d\omega_{ \Omega_T\setminus C_{K\rho,\rho}(x_0,t_0)}^{(\hat x,\hat
t)}\notag
\\
& +&  \int\limits_{\Omega_T\cap \partial_p C_{K\rho,\rho}(x_0,t_0)\cap
\{(x,t):\ |t-t_0|<\rho^2\}} u d\omega_{ \Omega_T\setminus C_{K\rho,\rho}(x_0,t_0)}^{(\hat x,\hat
t)}.
\end{eqnarray}
Let 
\begin{eqnarray*}\label{the4eq7}
E_1=\omega_{\Omega_T\setminus
C_{K\rho,\rho}(x_0,t_0)}^{(\hat x,\hat t)}(\Omega_T^{\mu\rho^{\prime }}\cap (B_d(x_0,\rho)\times\{t_0+\rho^2\}))
,
\end{eqnarray*}
and 
\begin{eqnarray*}\label{the4eq7}
E_2=\omega_{\Omega_T\setminus
C_{K\rho,\rho}(x_0,t_0)}^{(\hat x,\hat t)}(\Omega_T\cap \partial_p
C_{K\rho,\rho}(x_0,t_0)\cap \{(x,t):\ |t-t_0|<\rho^2\}).
\end{eqnarray*}
Using \eqref{the4eq7++}, Lemma \ref{lema1}
and Lemma \ref{lema2} we deduce that
\begin{eqnarray}\label{the4eq7}
u(\hat x,\hat t)\geq E_1\left(\frac{2r}{\rho}\right)^{\gamma_1}-E_2\left(\frac{8r}{\rho}\right)^{\gamma_2}.
\end{eqnarray}
Furthermore, by the maximum principle and Lemma \ref{lema3} we see that 
\begin{align}\label{the4eq8}
& \omega_{ \Omega_T\setminus C_{K\rho,\rho}(x_0,t_0)}^{(\hat x,\hat t)}(\Omega_T\cap \partial_p C_{K\rho,\rho}(x_0,t_0)\cap
\{(x,t):\ |t-t_0|<\rho^2\}) 
\\
& \leq \omega_{\Omega_{[t_0+\rho^2,T]}}^{(\hat x,\hat t)}(\Omega_T\cap\{t:\
t=t_0+\rho^2\})  
\notag\\
& \leq \hat c \omega_{\Omega_{[t_0+\rho^2,T]}}^{(\hat x,\hat t)}(\Omega_T^{\mu\rho^{\prime }}\cap
\{(x,t): x \in B_d(x_0,\rho), t=t_0+\rho^2\}),
\notag
\end{align}
and so $E_2 \le \hat c E_1$.
In particular, using that $0> u(\hat x,\hat t)$ and combining \eqref{the4eq7} and \eqref{the4eq8} we can conclude that
\begin{equation*}  \label{the4eq9}
4^{-\gamma_2}(\rho/2r)^{\gamma_2-\gamma_1}<\hat c\leq 4^{-\gamma_2}(\hat
r/4r)^{\gamma_2-\gamma_1},
\end{equation*}
and hence that $2\rho< \hat r$. This implies that $ \Gamma_K^+(x_0,t_0,4r,2\rho)
\subset \Gamma_K^+(x_0,t_0,4r,\hat r)$.  Since  $(\hat x,\hat t)\in \Gamma_K^+(x_0,t_0,4r,2\rho)$ we can 
therefore conclude from $(B)$ that $u(\hat x,\hat t)\geq 0$. This contradicts our choice of $(\hat x,\hat t)$ and 
hence \eqref{the4eq1} must be true. This completes the proof of Theorem \ref{T:doubling}. 
\end{proof}

\end{document}